
\documentclass[a4paper,12pt,reqno]{amsart}
\usepackage[hyphens]{url}
\usepackage{enumerate}
\usepackage{booktabs}
\usepackage{dcolumn}
\usepackage{verbatim}
\usepackage{amsthm}

\def\ve#1{\mathchoice{\mbox{\boldmath$\displaystyle\bf#1$}}%
{\mbox{\boldmath$\textstyle\bf#1$}}%
{\mbox{\boldmath$\scriptstyle\bf#1$}}%
{\mbox{\boldmath$\scriptscriptstyle\bf#1$}}}

\newcommand\Z{\mathbb Z}
\newcommand\N{\mathbb N}
\newcommand\R{\mathbb R}
\newcommand\Q{\mathbb Q}
\newcommand{\Proj}{{\mathbb P}}
\newcommand{\lin}{\operatorname{lin}}
\DeclareMathOperator{\card}{Card}

\DeclareMathOperator{\conv}{conv}   
\DeclareMathOperator{\vol}{vol}     
\DeclareMathOperator{\Res}{Res}  

\let\epsilon=\varepsilon
\usepackage{ifthen}
\makeatletter
\newcommand{\DeclareBracket}[3]{
  \newcommand{#1}[2][]{%
  \ifthenelse%
  {\equal{##1}{}}%
  {\left#2##2\right#3}%
  {\csname ##1l\endcsname#2##2\csname ##1r\endcsname#3}}}
\makeatother
\DeclareBracket\abs||           
\DeclareBracket\norm\|\|        
\DeclareBracket\floor\lfloor\rfloor
\DeclareBracket\ceil\lceil\rceil
\DeclareBracket\set\{\}
\DeclareBracket\paren()
\DeclareBracket\inner\langle\rangle
\DeclareBracket\fractional\{\}


\newcommand\C{\mathbb C}
\newcommand\Order{\mathrm O}     
\newcommand\ifpdf
    \input{[.pdf_t}
    \else
    \input{[.pstex_t}
    \fi1]{\ifpdf
    \input{#1.pdf_t}
    \else
    \input{#1.pstex_t}
    \fi}

\newenvironment{inputlist}
  {\begin{enumerate}[\quad\rm({I}$_\bgroup 1\egroup$)]}
  {\end{enumerate}}
\newenvironment{outputlist}
  {\begin{enumerate}[\quad\rm({O}$_\bgroup 1\egroup$)]}
  {\end{enumerate}}

\newcommand\minavgmax[3]{$\begin{array}[t]{@{}D{.}{.}{1}@{}}#1\\\bf #2\\#3
  \end{array}$}

\newtheorem{theorem}{Theorem}%
\makeatletter
\newtheorem{lemma}{Lemma}
\renewcommand*{\c@lemma}{\c@theorem}
\renewcommand*{\p@lemma}{\p@theorem}

\renewcommand*{\c@conjecture}{\c@theorem}
\renewcommand*{\p@conjecture}{\p@theorem}

\newtheorem{proposition}{Proposition}
\renewcommand*{\c@proposition}{\c@theorem}
\renewcommand*{\p@proposition}{\p@theorem}

\newtheorem{corollary}{Corollary}
\renewcommand*{\c@corollary}{\c@theorem}
\renewcommand*{\p@corollary}{\p@theorem}

\renewcommand*{\c@observation}{\c@theorem}
\renewcommand*{\p@observation}{\p@theorem}

\theoremstyle{definition}

\renewcommand*{\c@problem}{\c@theorem}
\renewcommand*{\p@problem}{\p@theorem}

\renewcommand*{\c@definition}{\c@theorem}
\renewcommand*{\p@definition}{\p@theorem}

\newtheorem{remark}{Remark}
\renewcommand*{\c@remark}{\c@theorem}
\renewcommand*{\p@remark}{\p@theorem}

\renewcommand*{\c@example}{\c@theorem}
\renewcommand*{\p@example}{\p@theorem}

\renewcommand*{\c@algorithm}{\c@theorem}
\renewcommand*{\p@algorithm}{\p@theorem}

\makeatother

\usepackage[breaklinks=true,colorlinks,citecolor=blue]{hyperref}
\renewcommand{\d}{\,\mathrm{d}}

\title[How to Integrate a Polynomial over a Simplex]{How to Integrate\\ a Polynomial over a Simplex}
\author{V. Baldoni}
\address{Velleda Baldoni: Dipartimento di Matematica, Universit\`a degli studi di  Roma ``Tor Vergata'',
Via della ricerca scientifica 1, I-00133, Italy}
\email{baldoni@mat.uniroma2.it}
\author{N. Berline}
\address{Nicole Berline: Centre de Math\'ematiques Laurent Schwartz, \'Ecole Polytechnique, 91128 Palaiseau Cedex, France}
\email{nicole.berline@math.polytechnique.fr}
\author{J. A. De Loera}
\address{Jes\'us A. De Loera:  Department of
  Mathematics, University of California,
  Davis, One Shields Avenue, Davis, CA, 95616, USA}
\email{deloera@math.ucdavis.edu}
\author{M. K\"oppe}
\address{Matthias~K\"oppe:  Department of
  Mathematics, University of California,
  Davis, One Shields Avenue, Davis, CA, 95616, USA}
\email{mkoeppe@math.ucdavis.edu}
\author{M. Vergne}
\address{Mich\`ele Vergne: Centre de Math\'ematiques Laurent Schwartz, \'Ecole Polytechnique, 91128 Palaiseau Cedex, France}
\email{vergne@math.polytechnique.fr}


\begin{document}

\begin{abstract} This paper settles the computational complexity of the problem of
integrating a polynomial function $f$ over a rational simplex. We
prove that the problem is $\mathrm{NP}$-hard for arbitrary
polynomials via a generalization of a theorem of Motzkin and Straus.
On the other hand, if the polynomial depends only on a fixed number
of variables, while its degree and the dimension of the simplex are
allowed to vary, we prove that integration can be done in polynomial
time. As a consequence, for polynomials of fixed total degree, there
is a polynomial time algorithm as well. We conclude the article with
extensions to other polytopes and discussion of other available
methods.
\end{abstract}

\maketitle

\section{Introduction}

Let $\Delta$ be a $d$-dimensional rational simplex inside $\R^n$ and
let $f\in \Q[x_1,\dots,x_n]$ be a polynomial with rational
coefficients. We consider the problem of how to efficiently compute
the \emph{exact} value of the integral of the polynomial $f$ over
$\Delta$, which we denote by $\int_{\Delta} f \d m$. We use here the
\emph{integral Lebesgue measure} $\d m$ on the affine hull $\inner{
\Delta}$
 of the simplex $\Delta$, defined below in
section \ref{lebesgue}. This normalization of the measure occurs
naturally in Euler--Maclaurin formulas for a polytope $P$, which
relate sums over the lattice points of~$P$ with certain integrals
over the various faces of~$P$. For this measure, the volume of the
simplex and every integral of a polynomial function with rational
coefficients are  \emph{rational numbers}. Thus the result has a
representation in the usual (Turing) model of computation. This is
in contrast to other normalizations, such as the induced Euclidean
measure, where irrational numbers appear.

The main goals of this article are to discuss the computational
complexity of the problem and to provide methods to do the
computation that are both theoretically efficient and have
reasonable performance in concrete examples.

Computation of integrals of polynomials over polytopes is
fundamental for many applications.   We already mentioned summation
over lattice points of a polytope.  They also make an appearance in
recent results in optimization problems connected to moment matrices
\cite{laurentsurvey}. These integrals are also commonly computed in
finite element methods, where the domain  is decomposed into cells
(typically simplices) via a mesh and complicated functions are
approximated by polynomials (see for instance \cite{ZienkTay}). When
studying a random univariate polynomial $p(x)$ whose coefficients
are independent random variables in certain intervals, the
probability distribution for the number of real zeros of $p(x)$ is
given as an integral over a polytope \cite{bharuchaetal}. Integrals
over polytopes also play a very important role in statistics, see,
for instance, \cite{lin-sturmfels-xu:marginal-likelihood}. Remark
that among all polytopes, simplices are the fundamental case to
consider for integration since any convex polytope can be
triangulated into finitely many simplices.

Regarding the computational complexity of our problem, one can ask
what happens with integration over arbitrary polytopes.  It is very
educational to look first at the case when $f$ is the constant
polynomial~$1$, and the answer is simply a volume.     It has been
proved that already computing the volume of polytopes of varying
dimension is $\#\mathrm{P}$-hard
\cite{dyerfrieze88,brightwellwinkler91,khachiyan93,lawrence91}, and
that even approximating the volume is hard~\cite{elekes86}. More
recently in \cite{rademacher} it was proved that computing the
centroid of a polytope is $\#\mathrm{P}$-hard. In contrast, for a
simplex, the volume is given by a determinant, which can  be
computed in polynomial time. One of the key contributions of this
paper is to settle the computational complexity of integrating a
non-constant polynomial over a simplex.  Before we can state our
results let us understand better the input and output of our
computations.  Our output will always be the rational number
$\int_\Delta f \d m$ in the usual binary encoding. The
$d$-dimensional input simplex will be represented by its vertices
$\ve s_1,\dots,\ve s_{d+1}$ (a $V$-representation) but note that, in
the case of a simplex, one can go from its representation as a
system of linear inequalities (an $H$-representation) to a
$V$-representation in polynomial time, simply by computing the
inverse of a matrix.

Thus the encoding size of $\Delta$ is given by the number of
vertices, the dimension, and the largest binary encoding size of the
coordinates among vertices. Computations with polynomials also
require that one specifies concrete data structures for reading the
input polynomial and to carry on the calculations. There are several
possible choices. One common representation of a polynomial is as a
sum of monomial terms with rational coefficients. Some authors
assume the representation is \emph{dense} (polynomials are given by
a list of the coefficients of all monomials up to a given total
degree~$r$), while other authors assume it is \emph{sparse}
(polynomials are specified by a list of exponent vectors of
monomials with non-zero coefficients, together with their
coefficients). Another popular representation is by
\emph{straight-line programs}.  A straight-line program which
encodes a polynomial is, roughly speaking, a program without
branches which enables us to evaluate it at any given point (see
\cite{burgisseretal,matera} and references therein). As we explain
in Section \ref{preliminar}, general straight-line programs are
\emph{too compact} for our purposes, so instead we restrict to a
subclass we call \emph{single-intermediate-use (division-free)
  straight-line programs} or \emph{SIU straight-line programs} for short. The
precise definition and explanation will appear in Section~\ref{preliminar}
 but for now the reader should think that polynomials
are represented as fully parenthesized arithmetic expressions
involving binary operators $+$ and $\times$.\smallbreak

Now we are ready to state our first result.

\begin{theorem}[Integrating general polynomials over a simplex is hard]
  \label{generalMhard}
  The following problem is NP-hard.
  Input:
  \begin{inputlist}
  \item numbers~$d, n\in\N$ in unary encoding,
  \item affinely independent rational vectors $\ve s_1,\dots,\ve s_{d+1} \in
    \Q^n$ in binary encoding,
  \item an SIU straight-line program~$\Phi$ encoding a
    polynomial~$f\in\Q[x_1,\dots,x_n]$ with rational coefficients.
  \end{inputlist}
  Output, in binary encoding:
  \begin{outputlist}
  \item the rational number $\int _\Delta f \d m$,
    where $\Delta\subseteq\R^n$ is the simplex with vertices~$\ve s_1,\dots,\ve s_{d+1}$
    and $\d m$ is the integral     Lebesgue measure of the rational affine subspace $\langle\Delta\rangle$.
  \end{outputlist}
\end{theorem}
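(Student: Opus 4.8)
The plan is to reduce from \textsc{Clique} --- that is, from deciding whether a graph $G$ on vertex set $\{1,\dots,n\}$ with edge set $E$ has a clique of a prescribed size --- using the classical theorem of Motzkin and Straus as the bridge. Recall that for the quadratic $q_G(x_1,\dots,x_n)=\sum_{\{i,j\}\in E}x_ix_j$ and the standard simplex $\Delta_n=\conv\{\ve e_1,\dots,\ve e_n\}\subseteq\R^n$ one has
\[
  \max_{\Delta_n}q_G \;=\; v_{\omega(G)},\qquad v_k:=\tfrac12\bigl(1-\tfrac1k\bigr),
\]
where $\omega(G)$ is the clique number; the candidate values $v_1<v_2<\dots<v_n$ are rationals with consecutive gaps $v_k-v_{k-1}=\tfrac1{2k(k-1)}\ge\tfrac1{2n^2}$. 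Hence it is enough to locate $\max_{\Delta_n}q_G$ within $\tfrac1{4n^2}$, and since $q_G\ge0$ on $\Delta_n$ the quantities $\bigl(\tfrac1{\vol(\Delta_n)}\int_{\Delta_n}q_G^{\,M}\d m\bigr)^{1/M}$ increase to $\max_{\Delta_n}q_G$ as $M\to\infty$. The crucial additional fact --- this is the ``generalization of Motzkin--Straus'' --- is that the convergence is fast enough that a \emph{polynomially} large exponent $M$ already separates the powers $v_k^{\,M}$, so that one integral of $q_G^{\,M}$ pins down $\omega(G)$.

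The reduction itself I would run as follows. Given $G$, put $M:=\lceil C\,n^{3}\log n\rceil$ for a suitable absolute constant $C$. Build a fully parenthesized arithmetic expression $\Phi$ (an SIU straight-line program) for $q_G^{\,M}$: an expression of size $\Order(|E|)$ for $q_G$, followed by a tree of $M-1$ multiplications over $M$ copies of it, of total size $\Order(M\,|E|)=\mathrm{poly}(n)$. Call the integration routine on the instance $d=n-1$, $n$, $\ve s_i=\ve e_i$, $\Phi$ to get $Q=\int_{\Delta_n}q_G^{\,M}\d m$; compute the (readily available, rational) volume $V=\vol(\Delta_n)$ directly; set $R:=Q/V$; and output the unique $k\in\{1,\dots,n\}$ with $v_{k-1}^{\,M}<R\le v_k^{\,M}$ (with $v_0^{\,M}:=-1$, catching $\omega(G)=1$). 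Since \textsc{Clique} asks whether $\omega(G)\ge k_0$, this is a polynomial-time Turing reduction once one checks that all numbers involved are polynomially sized --- which they are: $\Phi$ and the $v_k^{\,M}$ are of size $\mathrm{poly}(n)$, and $Q$ lies in $[0,V]$ and is a rational whose denominator divides the least common multiple of the denominators of the monomial integrals $\int_{\Delta_n}x^{\alpha}\d m$ with $|\alpha|\le2M$, a number of $\mathrm{poly}(n)$ bits.

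It remains to justify the claim $v_{\omega-1}^{\,M}<R\le v_\omega^{\,M}$, where $\omega:=\omega(G)$. The upper bound is immediate from Motzkin--Straus, since $0\le q_G\le v_\omega$ on $\Delta_n$. For the lower bound I would exhibit an explicit witness region: fix a maximum clique $K$ and $\eta\sim n^{-3}$, and consider the $(n-1)$-dimensional set $R_\eta=\{x\in\Delta_n:\ |x_i-\tfrac1\omega|\le\eta\ \text{for }i\in K,\ \ 0\le x_\ell\le\eta\ \text{for }\ell\notin K\}$. On $R_\eta$ every clique coordinate is at least $\tfrac1\omega-\eta$, so $q_G\ge\binom{\omega}{2}(\tfrac1\omega-\eta)^2\ge v_\omega-\Order(n^{-2})\ge v_\omega-\tfrac13(v_\omega-v_{\omega-1})$ (for $\omega\ge3$; the cases $\omega\le2$ are immediate, since then $v_{\omega-1}^{\,M}\le0<R$), while $\vol(R_\eta)/\vol(\Delta_n)\ge2^{-\Order(n\log n)}$. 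Therefore $R\ge2^{-\Order(n\log n)}\bigl(v_\omega-\tfrac13(v_\omega-v_{\omega-1})\bigr)^{M}$, and since the base here exceeds $v_{\omega-1}$ by a factor $1+\Omega(n^{-2})$, this beats $v_{\omega-1}^{\,M}$ as soon as $M\cdot\Omega(n^{-2})>\Order(n\log n)$ --- which is exactly what $M=\lceil Cn^3\log n\rceil$ achieves for $C$ large enough. So the reduction correctly returns $\omega(G)$.

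The main obstacle is this last, quantitative, step. Motzkin--Straus alone only identifies the \emph{value} $\max_{\Delta_n}q_G$; to read that value off from polynomially many bits of the single number $\int_{\Delta_n}q_G^{\,M}\d m$ one must control how sharply peaked $q_G^{\,M}$ is, i.e.\ bound from below the measure of $\{q_G\ge\max_{\Delta_n}q_G-\epsilon\}$ for $\epsilon$ of order $n^{-2}$. Making the constants in such a bound explicit --- so that $M$ stays polynomial and every rational produced along the way stays polynomially sized --- is where the real work lies; the remaining ingredients (bounding straight-line-program sizes, the normalization of $\d m$ on the affine hull $\langle\Delta_n\rangle$, and the phrasing as a Turing reduction from \textsc{Clique}) are routine.
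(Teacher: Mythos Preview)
Your proposal is correct and follows essentially the same approach as the paper: reduce from \textsc{Clique} via the Motzkin--Straus theorem, and show that for $M$ of order $n^3\log n$ the normalized integral $\frac{1}{\vol(\Delta_n)}\int_{\Delta_n} q_G^{\,M}\d m$ already determines $\omega(G)$, the key quantitative input being a lower bound on the measure of the set where $q_G$ is within $\Order(n^{-2})$ of its maximum. The only differences are cosmetic: the paper obtains the volume bound via a Lipschitz estimate on $Q_G$ and a scaled-simplex inclusion (rather than your explicit box around the clique barycenter), and packages the decision rule as $\omega(G)=\bigl\lceil\frac{1}{1-2\|Q_G\|_p}\bigr\rceil$ rather than your direct comparison of $R$ with the thresholds $v_k^{\,M}$.
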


But we can also prove the following positive results.

\begin{theorem}[Efficient integration of polynomials of fixed \emph{effective}
number of variables]
  \label{powersoflinear}
  For every fixed number~$D \in\N$, there exists a polynomial-time algorithm
  for the following problem.

\noindent  Input:
  \begin{inputlist}
  \item numbers~$ d,n,M \in\N$ in unary encoding,
  \item affinely independent rational vectors $\ve s_1,\dots,\ve s_{d+1} \in
    \Q^n$ in binary encoding,
  \item a polynomial~$f\in\Q[X_1,\dots,X_{D}]$ represented by either an SIU
    straight-line program~$\Phi$ of formal degree at most~$M$,
    or a sparse or dense monomial representation of total degree at most~$M$,
    \item a rational matrix $L$ with $D$ rows and $n$ columns in binary
      encoding, the rows of which define $D$~linear forms
    $\ve x\mapsto \langle \ell_j,\ve x\rangle$ on $\R^n$.
  \end{inputlist}
\noindent   Output, in binary encoding:
  \begin{outputlist}
  \item the rational number $\int _\Delta f(\langle \ell_1,\ve x\rangle,\dots, \langle \ell_D,\ve x\rangle) \d m $, where
    $\Delta\subseteq\R^n$ is the simplex with vertices~$\ve s_1,\dots,\ve
    s_{d+1}$     and $\d m$ is the integral     Lebesgue measure
    of the rational affine subspace $\langle\Delta\rangle$.
  \end{outputlist}
\end{theorem}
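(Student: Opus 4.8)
The plan is to reduce the computation, in polynomial time, to the evaluation of a bounded number of integrals of the form $\int_\Delta g(\ve x)^M\d m$ with $g$ an affine-linear function, each of which admits a closed formula. First I would normalize the input polynomial. If $f$ is presented by an SIU straight-line program of formal degree at most~$M$, I rewrite it as a dense monomial representation of total degree at most~$M$, which is possible in polynomial time by the constructions of Section~\ref{preliminar}; the essential points are that such a representation has only $\binom{M+D}{D} = \mathrm{O}(M^{D})$ coefficients, which is polynomial because $D$ is \emph{fixed} and $M$ is given in unary, and that the single-intermediate-use discipline prevents coefficient bit-sizes from blowing up along the program. So from now on I assume $f = \sum_{|\alpha|\le M} f_\alpha\, X_1^{\alpha_1}\cdots X_D^{\alpha_D}$ with the $f_\alpha \in \Q$ explicitly listed.

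The second, and decisive, step is to reduce to powers of linear forms. Homogenizing, with $e = D+1$ I set $\tilde f(Z_0,\dots,Z_D) = \sum_{|\alpha|\le M} f_\alpha\, Z_0^{\,M-|\alpha|} Z_1^{\alpha_1}\cdots Z_D^{\alpha_D}$, a form of degree exactly $M$ in $e$ variables with $f(Y_1,\dots,Y_D) = \tilde f(1,Y_1,\dots,Y_D)$. I then claim $\tilde f$ can be written, in polynomial time and with rationals $c_i$ of polynomial bit-size, as $\tilde f(Z) = \sum_{i=1}^{N} c_i \langle \mu_i, Z\rangle^{M}$ with $Z = (Z_0,\dots,Z_D)$, $N = \binom{M+e-1}{e-1}$ and $\mu_i \in \{0,1,\dots,M\}^{e}$. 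Indeed, in the monomial basis $\{Z^{\beta} : |\beta| = M\}$ of the degree-$M$ forms, the power $\langle\mu,Z\rangle^{M}$ has coordinate vector $\bigl(\binom{M}{\beta}\mu^{\beta}\bigr)_{\beta}$ with $\binom{M}{\beta}$ the multinomial coefficient; letting $\mu$ range over the grid $\{0,\dots,M\}^{e}$ gives a matrix whose columns indexed by $\{|\beta| = M\}$ are linearly independent, since they form part of the full multivariate Vandermonde matrix on that grid, which is the Kronecker product of $e$ nonsingular one-dimensional Vandermonde matrices on the nodes $0,1,\dots,M$. Hence the linear system for the $c_i$ is consistent, and Gaussian elimination returns a solution supported on $N$ of the grid points, with polynomially bounded numerators and denominators. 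Putting $Z_0 = 1$ and then $Z_j = \langle\ell_j,\ve x\rangle$ turns this identity into
\[ f\bigl(\langle\ell_1,\ve x\rangle,\dots,\langle\ell_D,\ve x\rangle\bigr) = \sum_{i=1}^{N} c_i\bigl(\langle L_i,\ve x\rangle + \mu_{i,0}\bigr)^{M},\qquad L_i = \sum_{j=1}^{D}\mu_{i,j}\,\ell_j, \]
a sum of $N$ powers of affine-linear forms whose coefficients still have polynomial bit-size.

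Third, I would invoke the closed formula for a single power. Using barycentric coordinates $\ve x = \sum_{k=1}^{d+1} t_k\ve s_k$ (so $t_k \ge 0$, $\sum_k t_k = 1$ and $g(\ve x) = \sum_k t_k\,g(\ve s_k)$ for affine $g$) together with the Dirichlet integral over the standard simplex, one gets
\[ \int_\Delta g(\ve x)^{M}\d m = \vol(\Delta)\,\frac{d!\,M!}{(M+d)!}\; h_M\bigl(g(\ve s_1),\dots,g(\ve s_{d+1})\bigr), \]
where $h_M$ is the complete homogeneous symmetric polynomial of degree~$M$. Here $\vol(\Delta)$ for the integral Lebesgue measure is a rational number computable in polynomial time (a determinant, up to a sublattice-index factor), and $h_M$ at the $d+1$ rationals $g(\ve s_k)$ is computed in polynomial time, e.g.\ from the generating function $\prod_k\bigl(1 - g(\ve s_k)z\bigr)^{-1}$ or via Newton's identities, with polynomial-size output because $M$, $d$ and $n$ are in unary. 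Summing over $i$ yields
\[ \int_\Delta f\bigl(\langle\ell_1,\ve x\rangle,\dots,\langle\ell_D,\ve x\rangle\bigr)\d m = \vol(\Delta)\,\frac{d!\,M!}{(M+d)!}\sum_{i=1}^{N} c_i\, h_M\bigl(\langle L_i,\ve s_1\rangle + \mu_{i,0},\dots,\langle L_i,\ve s_{d+1}\rangle + \mu_{i,0}\bigr), \]
a sum of polynomially many rationals of polynomial bit-size, which gives the desired polynomial-time algorithm.

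The step I expect to be the main obstacle is the bit-complexity bookkeeping underlying the first two steps: I must verify that $N$ and \emph{all} intermediate rationals — the coefficients of $\tilde f$, the Vandermonde-type matrix together with the relevant $N\times N$ inverse, the $c_i$, the forms $L_i$, and the numbers $h_M(\cdots)$ — stay polynomially bounded. This is precisely where the hypotheses enter: $D$ is a fixed constant, whereas $M$, $d$ and $n$ are encoded in unary; and in the straight-line-program variant the single-intermediate-use restriction is exactly what rules out a repeated-squaring blow-up, which is handled in Section~\ref{preliminar}. Everything else — consistency and solvability of the linear system, the Dirichlet integral identity, and the running time of Gaussian elimination and of evaluating $h_M$ — is routine.
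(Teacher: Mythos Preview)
Your argument is correct, but it follows a different route from the paper's own proof of this theorem.

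The paper proceeds as follows. After the same first step (expanding the SIU program into a dense polynomial in the $D$ variables, which is polynomial-size because $D$ is fixed), it reduces by linearity to integrating a single product $\langle\ell_1,\ve x\rangle^{M_1}\cdots\langle\ell_D,\ve x\rangle^{M_D}$. It then invokes Corollary~\ref{th:expansion}: this integral, divided by $d!\vol(\Delta)\,M_1!\cdots M_D!/(|\ve M|+d)!$, is the coefficient of $t_1^{M_1}\cdots t_D^{M_D}$ in the power series of
\[
\prod_{i=1}^{d+1}\bigl(1-t_1\langle\ell_1,\ve s_i\rangle-\cdots-t_D\langle\ell_D,\ve s_i\rangle\bigr)^{-1},
\]
and this coefficient is extracted in polynomial time by multiplying $d+1$ geometric series in $D$ variables truncated at total degree $|\ve M|$ (Lemma~\ref{truncated-product}). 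No decomposition into powers of single linear forms is needed; the generating-function identity does all the work at once.

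Your approach instead homogenizes $f$ in $D+1$ variables, writes the resulting form as a linear combination of $M$-th powers $\langle\mu,Z\rangle^{M}$ via a Vandermonde/linear-algebra argument, and then integrates each power using the complete-symmetric-polynomial formula, which is exactly the paper's formula~(\ref{eq:integral-linpower-big-sum}) (valid for affine $g$ by your barycentric argument). This is in spirit the paper's \emph{alternative} proof of Corollary~\ref{fixMpolytime} (the Waring-type decomposition via~(\ref{eq:decomp-powerlinform}) followed by Corollary~\ref{th:residue}), transplanted from the $n$ ambient variables to the $D{+}1$ homogenizing variables. Compared to the paper, your Vandermonde argument is less explicit than the ready-made identity~(\ref{eq:decomp-powerlinform}) but has the same polynomial cost; conversely, your evaluation of $h_M$ via $\prod_k(1-g(\ve s_k)z)^{-1}$ or Newton's identities is essentially the $D=1$ case of the paper's truncated-product method. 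The trade-off: the paper's main proof is shorter and avoids the homogenization and the linear system, while your route makes the role of ``power of a single linear form'' as the atomic integrand more visible and connects the theorem to the polynomial Waring problem the paper discusses later.
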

In particular, the computation of the integral of a power of
\emph{one} linear form can be done by a polynomial time algorithm.
This becomes  false already if  one considers  powers of a
{quadratic form} instead of powers of a {linear form}. Actually, we
prove Theorem \ref{generalMhard} by looking at powers $Q^M$ of the
Motzkin--Straus quadratic form of a graph.

Our method relies on properties of  integrals of  exponentials of
linear forms. A. Barvinok had previously investigated these
integrals and their computational complexity (see
\cite{Barvinok-1991}, \cite{Barvinok-1992}).

 As we will see later, when its degree is fixed,  a polynomial has a
 polynomial size representation in
either the SIU straight-line program encoding or the sparse or dense
monomial representation and one can switch between the three
representations efficiently.  The notion of formal degree of an SIU
straight-line program will be defined in Section~\ref{preliminar}.
\begin{corollary} [Efficient integration of polynomials of fixed degree]
  \label{fixMpolytime}
  For every fixed number~$M\in\N$, there exists a polynomial-time algorithm
  for the following problem.  Input:
  \begin{inputlist}
  \item numbers~$d,n\in\N$ in unary encoding,
  \item affinely independent rational vectors $\ve s_1,\dots,\ve s_{d+1} \in
    \Q^n$ in binary encoding,
  \item a polynomial~$f\in\Q[x_1,\dots,x_n]$ represented by either an SIU
    straight-line program~$\Phi$ of formal degree at most~$M$,
    or a sparse or dense monomial representation of total degree at
    most~$M$.
  \end{inputlist}
   Output, in binary encoding:
  \begin{outputlist}
  \item the rational number $\int _\Delta f(\ve x) \d m$, where
    $\Delta\subseteq\R^n$ is the simplex with vertices~$\ve s_1,\dots,\ve s_{d+1}$
    and $\d m$ is the integral     Lebesgue measure of the rational affine subspace $\langle\Delta\rangle$.
  \end{outputlist}
\end{corollary}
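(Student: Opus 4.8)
The plan is to derive Corollary~\ref{fixMpolytime} directly from Theorem~\ref{powersoflinear}, using the elementary observation that a monomial of bounded total degree involves only a bounded number of variables. First I would normalize the input to a monomial representation: as announced above and as will be established in Section~\ref{preliminar}, for a polynomial of formal degree at most the \emph{fixed} number~$M$ one can convert an SIU straight-line program into a sparse (equivalently, dense) monomial representation in polynomial time; the resulting list has only $\binom{n+M}{M}=\Order(n^{M})$ monomials, each with a rational coefficient of polynomially bounded bit size. Hence it suffices to handle the case in which $f=\sum_{\vealpha}c_{\vealpha}\,x^{\vealpha}$ is presented as such a sum, every term satisfying $|\vealpha|\le M$ and the number of terms being polynomial in the input size.

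By linearity of the integral, $\int_\Delta f\d m=\sum_{\vealpha}c_{\vealpha}\int_\Delta x^{\vealpha}\d m$, so it is enough to compute each $\int_\Delta x^{\vealpha}\d m$ in polynomial time and then assemble the (polynomially long) rational sum. Fix a monomial $x^{\vealpha}$. Since $\sum_i\alpha_i\le M$, at most $M$ of the exponents are nonzero; let $x_{i_1},\dots,x_{i_k}$, with $k\le M$, be the variables that actually occur. Then $x^{\vealpha}=g\bigl(\langle\ell_1,\ve x\rangle,\dots,\langle\ell_M,\ve x\rangle\bigr)$, where $g\in\Q[X_1,\dots,X_M]$ is the single monomial $X_1^{\alpha_{i_1}}\cdots X_k^{\alpha_{i_k}}$ (not involving $X_{k+1},\dots,X_M$) and $L$ is the $M\times n$ rational, indeed $0/1$, matrix whose first $k$ rows are the coordinate vectors $\ve e_{i_1},\dots,\ve e_{i_k}$ and whose remaining rows are zero. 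This is exactly an instance of the problem solved by Theorem~\ref{powersoflinear} with the fixed parameter $D:=M$ and with the degree bound of that theorem also set to the constant~$M$; the value $\int_\Delta x^{\vealpha}\d m$ is therefore computable in time polynomial in $d$, $n$, and the bit sizes of $\ve s_1,\dots,\ve s_{d+1}$.

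Assembling the pieces: convert the input to monomial form, invoke Theorem~\ref{powersoflinear} once for each of the $\Order(n^{M})$ monomials as above, multiply by $c_{\vealpha}$, and sum. Each step runs in polynomial time and yields numbers of polynomially bounded size, so the overall procedure is a polynomial-time algorithm. The only points needing care are representational bookkeeping: that the conversion of an SIU straight-line program of formal degree $\le M$ to a monomial list is genuinely polynomial for fixed~$M$, and that restricting from the $n$ ambient variables to the $\le M$ variables occurring in a given monomial respects the input conventions of Theorem~\ref{powersoflinear} (a binary-encoded matrix $L$ with $D=M$ rows, and degree bound~$M$). Both are routine given the definitions of Section~\ref{preliminar}; the mathematical content of the corollary is entirely carried by Theorem~\ref{powersoflinear}.
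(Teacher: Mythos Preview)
Your proposal is correct and follows essentially the same route as the paper's first proof: convert the SIU program to a monomial list (polynomial time since $M$ is fixed), note that each monomial of total degree at most~$M$ involves at most~$M$ coordinate variables, and feed each such monomial to Theorem~\ref{powersoflinear} with $D=M$ and coordinate linear forms. The paper also records a second, genuinely different proof via the decomposition~\eqref{eq:decomp-powerlinform} of each monomial into powers of linear forms combined with Corollary~\ref{th:residue}, but your argument matches the primary proof.
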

Actually, we  give two interesting methods  that prove
Corollary~\ref{fixMpolytime}. First, we simply observe that a
monomial with total degree $M$ involves at most $M$ variables. The
other  method  is related to the polynomial Waring problem: we
decompose a homogeneous polynomial of total degree $M$ into a sum of
$M$-th powers of linear forms.

In \cite{lasserre-avrachenkov2001} Lasserre and Avrachenkov compute
the integral  $\int _\Delta f(\ve x) \d m$ when $f$ is a homogeneous
polynomial, in terms of the corresponding polarized symmetric
multilinear form (Proposition \ref{th:lasserre-avrachenkov}). We
show  that their formula also leads to a proof of Corollary
\ref{fixMpolytime}. Furthermore, several other methods can be used
for  integration of polynomials of fixed degree. We discuss them in
Section \ref{otheralgsextensions}.

This paper is organized as follows: After some preparation in
Section \ref{preliminar}, the main theorems are proved in Section
\ref{mainthms}. In Section \ref{otheralgsextensions}, we discuss
extensions to other convex polytopes and give a survey of the
complexity of other algorithms. Finally, in
Section~\ref{s:computations}, we describe the implementation of the
two  methods  of Section \ref{mainthms}, and we report on a few
computational experiments.

\section{Preliminaries} \label{preliminar}

In this section we prepare for the proofs of the main results.

\subsection{Integral Lebesgue measure on a rational affine subspace of
$\R^n$}\label{lebesgue} On $\R^n$ itself  we consider the standard
Lebesgue measure, which gives volume $1$ to the fundamental domain
of the lattice $\Z^n$. Let $L$ be a rational linear subspace of
dimension $d\leq n$. We normalize the Lebesgue measure on~$L$, so
that the volume of the fundamental domain of the intersected lattice
$L\cap \Z^n$ is~$1$. Then for any affine subspace $L+ \ve a$
parallel to $L$, we define the \emph{integral Lebesgue measure}  $\d
m$  by translation. For example, the diagonal  of the unit square
has length $1$ instead of  $\sqrt{2}$.

\subsection{Encoding polynomials for integration}

We now explain our encoding of polynomials as SIU straight-line programs
and justify our use of this encoding. We say that a polynomial~$f$ is
represented as a (division-free) \emph{straight-line program}~$\Phi$ if there is a finite sequence
of polynomial functions of $\Q[x_1,\dots,x_n]$, namely $q_1,\dots,q_k$, the
so-called \emph{intermediate results}, such that each $q_i$ is either a variable $x_1,\dots,x_n$,  an element of~$\Q$,
or either the sum or the product of two preceding polynomials in the sequence
and such that $q_k = f$.
A straight-line program allows us to
describe in polynomial space polynomials which otherwise would need to be
described with exponentially many monomial terms. For example, think of
of the representation of $(x_1^2+\dots+x_n^2)^k$ as monomials versus its
description with only $3n+k+2$ intermediate results; see \autoref{tab:slp1}.
\begin{table}[t]
  \caption{The representation of $(x_1^2+\dots+x_n^2)^k$ as a straight-line
    program}
  \label{tab:slp1}
  \begin{minipage}{0.8\linewidth}
    \begin{displaymath}
      \begin{array}{r@{\;}c@{\;}lp{.5\linewidth}}
        \toprule
        \multicolumn{3}{c}{\text{Intermediate}}
        & \multicolumn{1}{l}{\text{Comment}} \\
        \midrule
        q_1 &=& 0 \\
        q_2 &=& x_1 \\
        q_3 &=& q_2 \cdot q_2 \\
        q_4 &=& q_1 + q_3
        & Thus $q_4 = x_1^2$. \\
        q_5 &=& x_2 \\
        q_6 &=& q_5 \cdot q_5 \\
        q_7 &=& q_4 + q_6
        & Now $q_7 = x_1^2 + x_2^2$.
        \\
        &\vdots\\
        q_{3n-1} &=& x_n \\
        q_{3n} &=& q_{3n-1} \cdot q_{3n-1} \\
        q_{3n+1} &=& q_{3n-2} + q_{3n}
        & Now $q_{3n+1} = x_1^2+\dots+x_n^2$.
        \\
        q_{3n+2} &=& 1 \\
        q_{3n+3} &=& q_{3n+2} \cdot q_{3n+1} \\
        q_{3n+4} &=& q_{3n+3} \cdot q_{3n+1} \\
        &\vdots \\
        q_{3n+k+2} &=& q_{3n+k+1} \cdot q_{3n+1}
        & Final result.\\
        \bottomrule
      \end{array}
    \end{displaymath}
  \end{minipage}
\end{table}
The number of intermediate results of a straight line program is
called its \emph{length}.  To keep track of constants we define the
\emph{size} of an intermediate result as one, unless the
intermediate result is a constant in which case its size is the
binary encoding size of the rational number. The \emph{size} of a
straight-line program is the sum of the sizes of the intermediate
results. The \emph{formal degree} of an intermediate result~$q_i$ is
defined recursively in the obvious way, namely as 0 if $q_i$ is a
constant of~$\Q$, as 1 if $q_i$ is a variable~$x_j$, as the maximum
of the formal degrees of the summands if $q_i$ is a sum, and the sum
of the formal degrees of the factors if~$q_i$ is a product.  The
\emph{formal degree of the straight-line program}~$\Phi$ is the
formal degree of the final result~$q_k$. Clearly the total degree of
a polynomial is bounded by the formal degree of any straight-line
program which represents it.

A favorite example to illustrate the benefits of a straight-line
program encoding is that of the symbolic determinant of an $n \times
n$ matrix. Its dense representation as monomials has size
$\Theta(n!)$ but it can be computed in $\Order(n^3)$ operations by
Gaussian elimination. See the book \cite{burgisseretal}  as a
reference for this concept.

From a monomial representation of a polynomial of degree $M$ and $n$
variables it is easy to encode it as a straight-line program: first,
by going in increasing degree we can write a straight-line program
that generates all monomials of degree at most $M$ in $n$ variables.
Then for each of them compute the product of the monomial with its
coefficient so the length doubles. Finally successively add each
term. This gives a final length bounded above by  four times the
number of monomials of degree at most $M$ in $n$ variables.

Straight-line programs are quite natural in the context of integration. One would certainly not expand
$(x_1^2+\dots+x_n^2)^k$ to carry on numeric integration when we can easily
evaluate it as a function.  More importantly, straight-line programs are
suitable as an input and output encoding and data structure in certain
symbolic algorithms for computations with polynomials, like factoring; see
\cite{burgisseretal}.  Since straight-line programs can be very compact,
the algorithms can handle polynomials whose input and output encodings
have an exponential size in a sparse monomial representation.

However, a problem with straight-line programs is that this input encoding can
be so compact that the output of many computational questions cannot be written
down efficiently in the usual \emph{binary encoding}.
 For example, while one can encode the polynomial $x^{2^k}$
with a straight-line program with only~$k+1$ intermediate results
(see \autoref{tab:slp1}),
\begin{table}[t]
  \caption{The representation of a straight-line program for $x^{2^k}$, using
    iterated squaring}
  \label{tab:slp1}
  \begin{minipage}{0.8\linewidth}
    \begin{displaymath}
      \begin{array}{r@{\;}c@{\;}lp{.5\linewidth}}
        \toprule
        \multicolumn{3}{c}{\text{Intermediate}}
        & \multicolumn{1}{l}{\text{Comment}} \\
        \midrule
        q_1 &=& x \\
        q_2 &=& q_1 \cdot q_1 \\
        q_3 &=& q_2 \cdot q_2 \\
        &\vdots\\
        q_{k+1} &=& q_k \cdot q_k
        & Final result.\\
        \bottomrule
      \end{array}
    \end{displaymath}
  \end{minipage}
\end{table}
when we compute the value of $x^{2^k}$ for
$x=2$, or the integral $\int_0^2 x^{2^k}\d x=2^{2^k+1}/(2^k+1)$, the
binary encoding of the output has a size of $\Theta(2^k)$. Thus the
output, given in binary, turns out to be exponentially bigger than
the input encoding. We remark that the same difficulty arises if we
choose a sparse input encoding of the polynomial where not only the
coefficients but also the exponent vectors are encoded in binary
(rather than the usual unary encoding for the exponent vectors).

This motivates the following variation of the notion of
straight-line program: We say a (division-free) straight-line
program is \emph{single-intermediate-use}, or SIU for short, if
every intermediate result is used only once in the definition of
other intermediate results.  (However, the variables $x_1,\dots,x_n$ can be
used arbitrarily often in the definition of intermediate results.)
With this definition, all ways to encode
the polynomial $x^{2^k}$ require at least $2^k$ multiplications.
An example SIU straight-line program is shown in \autoref{tab:siu-slp}.
\begin{table}[t]
  \caption{The representation of a single-intermediate-use straight-line
    program for $x^{2^k}$; note that the iterated squaring method cannot be
    used}
  \label{tab:siu-slp}
  \begin{minipage}{0.8\linewidth}
    \begin{displaymath}
      \begin{array}{r@{\;}c@{\;}lp{.5\linewidth}}
        \toprule
        \multicolumn{3}{c}{\text{Intermediate}}
        & \multicolumn{1}{l}{\text{Comment}} \\
        \midrule
        q_1 &=& x \\
        q_2 &=& x \\
        q_3 &=& q_1 \cdot q_2 & Now $q_3 = x^2$, and $q_1$ and $q_2$ cannot be
        used anymore. \\
        q_4 &=& x \\
        q_5 &=& q_3 \cdot q_4 & Thus $q_5 = x^3$.\\
        &\vdots\\
        q_{2^{k+1}-2} &=& x \\
        q_{2^{k+1}-1} &=& q_{2^{k+1}-3} \cdot q_{2^{k+1}-2}
          & Final result.\\
        \bottomrule
      \end{array}
    \end{displaymath}
  \end{minipage}
\end{table}
Clearly single-intermediate-use straight-line programs are
equivalent, in terms of expressiveness and encoding complexity, to
fully parenthesized arithmetic expressions using binary operators
$+$ and $\times$.

\subsection{Efficient computation of truncated product of an arbitrary number of polynomials in a fixed number of variables}
The following result will be used in several situations.

\begin{lemma} \label{truncated-product}
For every fixed number $D\in \N$, there exists a polynomial time
algorithm for the following problem.

\noindent Input: a number $M$  in unary encoding, a sequence of $k$
polynomials $P_j\in \Q[X_1,\dots, X_D]$ of total degree at most $M$,
in dense monomial representation.

\noindent Output: the product $P_1\cdots P_k$ truncated at degree
$M$.
\end{lemma}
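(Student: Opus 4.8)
The plan is to lean on the simple but crucial observation that, once the number of variables $D$ is fixed, the dense monomial representation of a polynomial of total degree at most $M$ is \emph{already} of polynomial size. The number of monomials $X_1^{a_1}\cdots X_D^{a_D}$ with $a_1+\dots+a_D\le M$ is $N:=\binom{M+D}{D}=\Order(M^D)$, which is polynomial in $M$ because $D$ is constant; since $M$ is given in unary, $N$ is polynomial in the size of the input. So each $P_j$ is described by $N$ rational numbers, and the whole input has size polynomial in $k$, $M$, and the bit sizes of the coefficients.

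First I would compute the product from left to right, truncating after every multiplication: put $R_1:=P_1$ and, for $i=1,\dots,k-1$, let $R_{i+1}$ be obtained from $R_i\cdot P_{i+1}$ by discarding (or simply never generating) the monomials of total degree greater than~$M$. Writing $\tau_M$ for truncation at degree $M$, one has $\tau_M(fg)=\tau_M\bigl(\tau_M(f)\,\tau_M(g)\bigr)$, since every cross term involving a part of degree $>M$ has all its monomials of degree $\ge M+1$; hence an easy induction shows $R_k=\tau_M(P_1\cdots P_k)$, the required output. At each stage $R_i$ has total degree at most $M$, so at most $N$ monomials, and one multiply-and-truncate step takes $\Order(N^2)$ multiplications of scalars plus the collection of like terms. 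Over the $k-1$ steps this is $\Order(kN^2)=\Order(kM^{2D})$ arithmetic operations, a polynomial number; schoolbook multiplication already suffices, so no fast-multiplication machinery is needed.

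The one point that requires care — and the place where a careless argument would go wrong — is controlling the bit sizes of the numbers that occur, because summing many fractions over a naive common denominator can blow up exponentially. To avoid this I would keep each partial product in the normalized form $R_i=\tilde R_i/D_i$ with $\tilde R_i\in\Z[X_1,\dots,X_D]$ and $D_i\in\Z_{>0}$, and likewise clear the denominators of each $P_{i+1}$; the multiply-and-truncate step then multiplies the scalar denominators and replaces $\tilde R_i$ by $\tau_M(\tilde R_i\cdot\widehat P_{i+1})$ with $\widehat P_{i+1}$ the integer part of $P_{i+1}$. If $B$ bounds the bit size of the coefficients of the $P_j$, then each $\widehat P_{i+1}$ has integer coefficients of bit size $\Order(NB)$ and each scalar denominator has bit size $\Order(NB)$, so $D_k$ has bit size $\Order(kNB)$; and since a coefficient of $\tilde R_i$ is a sum of at most $N$ products of a coefficient of $\tilde R_{i-1}$ and a coefficient of $\widehat P_i$, the bit sizes grow only additively, giving coefficients of $\tilde R_k$ of bit size $\Order(kNB)$. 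All of these are polynomial in the input size because $N=\Order(M^D)$ with $D$ fixed and $M$ in unary. Hence each of the $\Order(kN^2)$ arithmetic operations is performed on integers of polynomial bit size, and the algorithm runs in polynomial time.
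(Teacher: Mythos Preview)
Your proof is correct and follows essentially the same approach as the paper: iteratively form the truncated product from left to right, observing that each step costs $\Order(N^2)=\Order(M^{2D})$ operations for a total of $\Order(kM^{2D})$. Your treatment is in fact more careful than the paper's, which simply asserts that the coefficient encoding lengths stay polynomial, whereas you explicitly control bit growth via the common-denominator normalization.
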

\begin{proof}
We start with the product of the first two polynomials.  We compute
the monomials of degree at most $M$  in this product.
 This takes $\Order(M^{2D})$
elementary rational operations, and the maximum encoding length of
any coefficient in the product  is also polynomial in the input data
length. Then we multiply this truncated product with the next
polynomial, truncating at degree $M$,  and so on. The total
computation takes $\Order(k M^{2D})$ elementary rational operations.
\end{proof}

\section{Proofs of the main results}
\label{mainthms}

Our aim is to perform an efficient computation of $\int_\Delta f \d m$ where
$\Delta$ is a simplex and $f$ a polynomial. We will prove first that this is
not possible for $f$ of varying degree under the assumption that $\mathrm{P}
\not= \mathrm{NP}$.  More precisely, we prove that, under this assumption, an
efficient computation of $\int_\Delta Q^M \d m$ is not possible, where $Q$ is
a quadratic form and $M$ is allowed to vary.

In the next subsection we present an algorithm to efficiently
compute the integral $\int_\Delta f \d m$ in some particular
situations, most notably the case of arbitrary powers of linear
forms.

\subsection{Hardness for polynomials of non-fixed degree}

For the proof of Theorem \ref{generalMhard} we need to extend the
following well-known result of Motzkin and Straus
\cite{motzkin-straus}. In this section, we denote by $\Delta $ the
$(n-1)$-dimensional canonical simplex $\{\, \ve x \in \R^n : x_i
\geq 0, \ \sum_{i=1}^{n} x_i= 1\, \}$, and   we denote by $\d m$ the
Lebesgue measure on the hyperplane $\{\, \ve x \in \R^n :
\sum_{i=1}^{n} x_i= 1\, \}$, normalized so that  $\Delta$ has volume
$1$. For a function $f$ on $\Delta$, denote as usual
$\norm{f}_\infty=\max_{\ve x \in \Delta} \abs{f(\ve x)}$ and
$\norm{f}_p=(\int_\Delta \abs{f}^p \d m)^{1/p} $,  for $p\geq 1$.
Recall that the \emph{clique number} of a graph~$G$ is the largest number of vertices
of a complete subgraph of~$G$.

\begin{theorem}[Motzkin--Straus]
\label{motzkinstraus1} Let $G$ be a graph with $n$ vertices and
clique number $\omega(G)$. Let $Q_G(\ve x)$ be the Motzkin--Straus
quadratic form $\frac{1}{2} \sum_{(i,j) \in E(G)} x_ix_j$.   Then
$\norm{Q_G(\ve x)}_\infty =\frac{1}{2}(1-\frac{1}{\omega(G)})$.
\end{theorem}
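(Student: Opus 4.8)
The plan is to prove the two inequalities $\norm{Q_G}_\infty\ge\tfrac12\bigl(1-\tfrac1{\omega(G)}\bigr)$ and $\norm{Q_G}_\infty\le\tfrac12\bigl(1-\tfrac1{\omega(G)}\bigr)$ separately. Since every coordinate is nonnegative on~$\Delta$, we have $Q_G\ge 0$ there, so $\norm{Q_G}_\infty=\max_{\ve x\in\Delta}Q_G(\ve x)$, a maximum attained by compactness of~$\Delta$; it is also convenient to read $Q_G(\ve x)=\sum_{\{i,j\}}x_ix_j$ with the sum over the (unordered) edges of~$G$, the factor~$\tfrac12$ in the definition compensating for the double count over ordered pairs. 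For the lower bound, set $\omega=\omega(G)$, fix a clique $K$ of~$G$ on $\omega$ vertices, and evaluate $Q_G$ at the point $\ve x$ with $x_i=1/\omega$ for $i\in K$ and $x_i=0$ otherwise: this lies in~$\Delta$, and since all $\binom{\omega}{2}$ pairs inside $K$ are edges, $Q_G(\ve x)\ge\binom{\omega}{2}/\omega^2=\tfrac12\bigl(1-\tfrac1\omega\bigr)$, whence $\norm{Q_G}_\infty\ge\tfrac12\bigl(1-\tfrac1\omega\bigr)$.

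For the matching upper bound --- the heart of the argument --- choose among all maximizers of $Q_G$ on~$\Delta$ one, say $\ve x^\star$, whose support $S=\{\,i:x_i^\star>0\,\}$ is inclusion-minimal. The claim is that $S$ spans a clique of~$G$. If not, pick $i,j\in S$ with $\{i,j\}\notin E(G)$; for $t$ near~$0$ the point $\ve x(t)$ obtained from $\ve x^\star$ by the replacement $x_i^\star\mapsto x_i^\star+t$, $x_j^\star\mapsto x_j^\star-t$ stays in~$\Delta$, and since the monomial $x_ix_j$ does not occur in $Q_G$, the map $t\mapsto Q_G(\ve x(t))$ is affine. If it is non-constant, $Q_G$ can be strictly increased, contradicting maximality; if it is constant, sliding $t$ to an endpoint of its feasible interval yields another maximizer with support properly contained in~$S$, contradicting minimality. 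Hence $S$ is a clique, $\abs{S}\le\omega$, and
\[
Q_G(\ve x^\star)=\sum_{\{i,j\}\subseteq S}x_i^\star x_j^\star
=\tfrac12\Bigl(\bigl(\textstyle\sum_{i\in S}x_i^\star\bigr)^2-\textstyle\sum_{i\in S}(x_i^\star)^2\Bigr)
=\tfrac12\bigl(1-\textstyle\sum_{i\in S}(x_i^\star)^2\bigr)\le\tfrac12\bigl(1-\tfrac1{\abs{S}}\bigr)\le\tfrac12\bigl(1-\tfrac1\omega\bigr),
\]
where the first inequality is Cauchy--Schwarz, $\sum_{i\in S}(x_i^\star)^2\ge\tfrac1{\abs{S}}\bigl(\sum_{i\in S}x_i^\star\bigr)^2=\tfrac1{\abs{S}}$. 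Combining with the lower bound gives the asserted equality.

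The only delicate point I anticipate is the clique claim, and within it the constant-affine case: one must have selected a maximizer of \emph{minimal} support beforehand, and must check that the perturbation $\ve x(t)$ can be pushed far enough to annihilate a coordinate while remaining in~$\Delta$. Everything else --- nonnegativity of $Q_G$ on $\Delta$, the evaluation at the barycenter of a maximum clique, and the Cauchy--Schwarz step --- is routine.
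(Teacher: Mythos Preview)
The paper does not actually prove this theorem; it is quoted as a known result with a citation to Motzkin and Straus, and then used as a black box in the proof of Lemma~\ref{motzkinstraus2}. So there is no ``paper's own proof'' to compare against.

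That said, your argument is correct and is essentially the classical Motzkin--Straus proof. The lower bound from the barycenter of a maximum clique is immediate. For the upper bound, your minimal-support maximizer combined with the mass-shifting perturbation along a non-edge is exactly the standard trick: absence of the monomial $x_ix_j$ makes $t\mapsto Q_G(\ve x(t))$ affine, and your dichotomy (non-constant contradicts maximality; constant contradicts support-minimality) is clean and complete. The final Cauchy--Schwarz estimate $\sum_{i\in S}(x_i^\star)^2\ge 1/|S|$ is the right way to finish once $S$ is known to be a clique. The two points you flag as delicate---choosing the maximizer of minimal support in advance, and checking that the perturbation interval $t\in[-x_i^\star,x_j^\star]$ reaches a boundary where a coordinate vanishes---are both handled correctly.
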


Our first result might be of independent interest as it shows that
integrals of polynomials over simplices can carry very interesting
combinatorial information. This result builds on the theorem of
Motzkin and Straus, using the proof of the well-known relation
$\norm{f}_\infty = \lim_{p\to \infty}\norm{f}_p$.

\begin{lemma}\label{motzkinstraus}
\label{motzkinstraus2} Let $G$ be a graph with $n$ vertices and
clique number $\omega(G)$. Let $Q_G(\ve x)$ be the Motzkin--Straus
quadratic form.
  Then for $p \geq 4(e-1) n^3\ln(32 n^2)$, the clique number $\omega(G)$ is equal to  $\bigl\lceil
\frac{1} {1-2\norm{Q_G}_p} \bigr\rceil$.
\end{lemma}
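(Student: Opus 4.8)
The plan is to show that $\tfrac{1}{1-2\norm{Q_G}_p}$ lies in the half-open interval $(\omega-1,\,\omega]$, where $\omega:=\omega(G)$ is the clique number; then its ceiling is exactly $\omega$. By Theorem~\ref{motzkinstraus1}, $M:=\norm{Q_G}_\infty=\tfrac12(1-\tfrac1\omega)$, and this maximum is attained at the point $\ve x^\star$ assigning mass $\tfrac1\omega$ to each of the $\omega$ vertices of a fixed maximum clique, since $Q_G(\ve x^\star)=\binom{\omega}{2}\,\omega^{-2}=\tfrac12(1-\tfrac1\omega)=M$; note also $\omega=\tfrac1{1-2M}$. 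The upper half of the claim is immediate: since $\d m$ is a probability measure on $\Delta$ and $Q_G\ge0$ there, $\norm{Q_G}_p\le\norm{Q_G}_\infty=M$, hence $\tfrac1{1-2\norm{Q_G}_p}\le\tfrac1{1-2M}=\omega$. If $G$ has no edge then $\omega=1$, $Q_G\equiv0$, and the asserted formula gives $1$; so assume henceforth $\omega\ge2$, so that $\tfrac14\le M<\tfrac12$. It remains to establish the strict bound $\tfrac1{1-2\norm{Q_G}_p}>\omega-1$. By the identity $M-\tfrac{\omega-2}{2(\omega-1)}=\tfrac1{2\omega(\omega-1)}$, this is equivalent to $\norm{Q_G}_p>M-\tfrac1{2\omega(\omega-1)}$, and since $\omega(\omega-1)\le n^2$ it suffices to prove the cleaner inequality $\norm{Q_G}_p>M-\tfrac1{2n^2}$.

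To bound $\norm{Q_G}_p$ from below, I would integrate $Q_G^p$ over a full-dimensional neighborhood of $\ve x^\star$ inside $\Delta$. Put $\lambda:=\tfrac1{8n^2}$ and $S:=(1-\lambda)\ve x^\star+\lambda\Delta\subseteq\Delta$. As a homothetic image of $\Delta$ with ratio $\lambda$ inside the $(n-1)$-dimensional affine hull $\langle\Delta\rangle$, $S$ has measure $m(S)=\lambda^{n-1}$. Every $\ve x\in S$ satisfies $\norm{\ve x-\ve x^\star}_1\le2\lambda$, and since $Q_G=\sum_{\{i,j\}\in E(G)}x_ix_j$ has $\abs{\partial Q_G/\partial x_i}=\bigl|\sum_{j\sim i}x_j\bigr|\le\sum_j x_j=1$ on $\Delta$, integrating $\nabla Q_G$ along the segment $[\ve x^\star,\ve x]\subseteq\Delta$ yields $Q_G(\ve x)\ge M-2\lambda$ for every $\ve x\in S$. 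As $M-2\lambda>0$, we obtain
\begin{displaymath}
  \norm{Q_G}_p=\Bigl(\int_\Delta Q_G^p\d m\Bigr)^{1/p}\ \ge\ \Bigl(\int_S Q_G^p\d m\Bigr)^{1/p}\ \ge\ m(S)^{1/p}\,(M-2\lambda)\ =\ \lambda^{(n-1)/p}\,(M-2\lambda).
\end{displaymath}

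What is left is purely quantitative: to check that $\lambda^{(n-1)/p}(M-2\lambda)>M-\tfrac1{2n^2}$ as soon as $p$ is at least the stated bound. From $e^{-t}>1-t$ one gets $\lambda^{(n-1)/p}=\exp\!\bigl(-\tfrac{(n-1)\ln(8n^2)}{p}\bigr)>1-\tfrac{(n-1)\ln(8n^2)}{p}$, which is $\ge1-\tfrac1{2n^2}$ whenever $p\ge2n^2(n-1)\ln(8n^2)$; then, using $2\lambda=\tfrac1{4n^2}$ and $M<\tfrac12$, the inequality $\bigl(1-\tfrac1{2n^2}\bigr)\bigl(M-\tfrac1{4n^2}\bigr)>M-\tfrac1{2n^2}$ (which reduces to $1-2M+\tfrac1{2n^2}>0$) finishes it. Since $2n^2(n-1)\ln(8n^2)\le4(e-1)n^3\ln(32n^2)$, the hypothesis $p\ge4(e-1)n^3\ln(32n^2)$ is amply sufficient, and combining with the first paragraph we conclude $\omega-1<\tfrac1{1-2\norm{Q_G}_p}\le\omega$, hence $\ceil{\tfrac1{1-2\norm{Q_G}_p}}=\omega=\omega(G)$. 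I expect the only genuinely delicate point to be this effective rate: the window $(\omega-1,\omega]$ around $\tfrac1{1-2M}$ has width $\tfrac1{\omega(\omega-1)}$, which for large cliques is only of order $n^{-2}$, so the classical fact $\norm{Q_G}_\infty=\lim_{p\to\infty}\norm{Q_G}_p$ must be made quantitative with a rate fine enough to land strictly inside that window; this is precisely what forces $p$ to be polynomial in $n$, and what makes both the gradient bound $\norm{\nabla Q_G}_\infty\le1$ and the volume estimate $m(S)=\lambda^{n-1}$ carry real weight.
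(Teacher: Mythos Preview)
Your proof is correct and follows essentially the same approach as the paper: both arguments use the gradient bound $\lvert\partial_i Q_G\rvert\le 1$ on~$\Delta$ to show that $Q_G$ stays within~$\epsilon$ of its maximum on a scaled copy of~$\Delta$ near the maximizer, then use the resulting volume estimate to lower-bound $\norm{Q_G}_p$ and sandwich $\tfrac{1}{1-2\norm{Q_G}_p}$ in an interval of length~$<1$ around~$\omega(G)$. Your organization is slightly more direct than the paper's (you target the inequality $\norm{Q_G}_p>M-\tfrac{1}{2n^2}$ immediately, whereas the paper first isolates the two-sided bound as a separate lemma via H\"older and then carries out a somewhat longer final estimate), but the substance is the same.
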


To prove Lemma \ref{motzkinstraus2} we will first prove the
following intermediate result.

\begin{lemma} \label{helpfullemma}
For $\epsilon>0$ we have
$$
(\norm{Q_G}_\infty-\epsilon)(\frac{\epsilon}{4})^{(n-1)/p } \leq
\norm{Q_G}_p \leq \norm{Q_G}_\infty.
$$
\end{lemma}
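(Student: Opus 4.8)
The upper bound is immediate: since $\Delta$ has volume $1$ under $\d m$, for any $p\geq 1$ we have $\norm{Q_G}_p = (\int_\Delta \abs{Q_G}^p \d m)^{1/p} \leq (\norm{Q_G}_\infty^p \cdot \vol(\Delta))^{1/p} = \norm{Q_G}_\infty$. So the real content is the lower bound, and the plan is to exhibit a subset of $\Delta$ on which $\abs{Q_G}$ is not much smaller than $\norm{Q_G}_\infty$, and which has not-too-small volume; then integrate over just that subset.

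First I would fix a point $\ve x^\star \in \Delta$ at which $Q_G$ attains its maximum $\norm{Q_G}_\infty$ (this exists by compactness and continuity; by Theorem~\ref{motzkinstraus1} it equals $\frac12(1-\frac1{\omega(G)})$, though the exact value is not needed here). Since $Q_G$ is a quadratic form with all coefficients $\frac12$ and hence Lipschitz on the bounded set $\Delta$ with an explicit constant, there is a neighborhood of $\ve x^\star$ in $\Delta$ on which $Q_G \geq \norm{Q_G}_\infty - \epsilon$. Concretely, I would take the region $R_\epsilon = \set{\ve x \in \Delta : \norm{\ve x - \ve x^\star}_1 \leq \delta}$ for a suitable $\delta$ proportional to $\epsilon$; the gradient bound $\norm{\nabla Q_G}_\infty \leq \tfrac12(n-1)$-type estimate, or more simply the bilinearity identity $Q_G(\ve x) - Q_G(\ve y)$ expanded in terms of $\ve x - \ve y$, shows $\abs{Q_G(\ve x) - Q_G(\ve x^\star)} \leq \epsilon$ when $\ve x$ is within the right $\ell_1$-distance of $\ve x^\star$. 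The main bookkeeping obstacle is getting the constant to come out as exactly $\epsilon/4$ raised to the power $(n-1)/p$, i.e.\ matching the stated bound rather than merely proving \emph{some} comparable estimate.

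The volume estimate is where the exponent $(n-1)$ enters. On the hyperplane $\set{\sum x_i = 1}$, which carries $d = n-1$ effective dimensions, a small simplex (the intersection of $\Delta$ with the neighborhood of a vertex or of an interior point, scaled by a factor $t$) has $\d m$-volume equal to $t^{n-1}$ times the volume of $\Delta$, which is $1$. So I would choose the neighborhood $R_\epsilon$ to be a scaled copy of $\Delta$ (anchored appropriately near $\ve x^\star$) with scaling factor $t = \epsilon/4$ — this is exactly the choice that both keeps $Q_G \geq \norm{Q_G}_\infty - \epsilon$ on $R_\epsilon$ (using the Lipschitz/bilinearity bound, since the scaled simplex has $\ell_1$-diameter $O(\epsilon)$) and gives $\vol(R_\epsilon) = (\epsilon/4)^{n-1}$. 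Then
$$
\norm{Q_G}_p^p = \int_\Delta \abs{Q_G}^p \d m \geq \int_{R_\epsilon} \abs{Q_G}^p \d m \geq (\norm{Q_G}_\infty - \epsilon)^p \, \paren{\frac{\epsilon}{4}}^{n-1},
$$
and taking $p$-th roots yields the left-hand inequality.

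The step I expect to require the most care is verifying that a scaled copy of $\Delta$ of factor $\epsilon/4$ can actually be placed inside $\Delta$ with one of its points being (or being close enough to) a maximizer $\ve x^\star$, \emph{and} that on this whole small simplex $Q_G$ stays above $\norm{Q_G}_\infty - \epsilon$. If $\ve x^\star$ lies on the boundary of $\Delta$ (which happens, e.g., when the maximizing clique does not use all vertices) one places the small scaled simplex in the corresponding face and translates slightly; since $Q_G$ restricted to that face still has the same maximum value, the argument goes through. The quantitative Lipschitz bound — that two points of $\Delta$ at $\ell_1$-distance $\rho$ have $Q_G$-values differing by at most $\rho/2$ or so — follows by writing $2(Q_G(\ve x) - Q_G(\ve y)) = \sum_{(i,j)\in E}(x_ix_j - y_iy_j)$ and using $\abs{x_ix_j - y_iy_j} \leq \abs{x_i - y_i}\,\abs{x_j} + \abs{y_i}\,\abs{x_j - y_j}$ together with $\ve x, \ve y \in \Delta$; choosing the scaling constant as $\epsilon/4$ rather than, say, $\epsilon/2$ gives the slack needed to absorb these cross terms cleanly. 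Everything else is a routine substitution.
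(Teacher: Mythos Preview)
Your approach is essentially the paper's: restrict the integral to a small set near a maximizer~$\ve x^\star$ on which $Q_G\ge\|Q_G\|_\infty-\epsilon$, and lower-bound the volume of that set by $(\epsilon/4)^{n-1}$. Two points deserve tightening.

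First, the Lipschitz estimate is cleaner than you make it. On~$\Delta$ one has $\partial Q_G/\partial x_i=\sum_{j:\{i,j\}\in E}x_j\in[0,1]$ simply because $\sum_j x_j=1$; hence, by convexity of~$\Delta$, $0\le Q_G(\ve x^\star)-Q_G(\ve x)\le\|\ve x^\star-\ve x\|_1$ for every $\ve x\in\Delta$. No cross-term bookkeeping or $\tfrac12(n-1)$-type bound is needed.

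Second, your handling of the boundary case is off: placing the scaled simplex \emph{in a lower-dimensional face} gives it zero $(n-1)$-dimensional volume, so that paragraph does not rescue the argument. The fix---and this is exactly what the paper does---is to take the convex combination
\[
P_\epsilon=(1-t)\,\ve x^\star+t\,\Delta,\qquad t=\tfrac{\epsilon/2}{1+\epsilon/2}\ \bigl(\text{or simply }t=\tfrac{\epsilon}{4}\bigr).
\]
This is automatically contained in~$\Delta$ for \emph{any} $\ve x^\star\in\Delta$, boundary or not; it has volume $t^{n-1}\ge(\epsilon/4)^{n-1}$; and every point of it lies within $\ell_1$-distance $2t\le\epsilon$ of~$\ve x^\star$, so $Q_G\ge\|Q_G\|_\infty-\epsilon$ there by the Lipschitz bound above. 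With this one construction the case split disappears and the constants fall out exactly.
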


\begin{proof} The right-hand side inequality follows from the normalization of the measure, as
$|Q(\ve x)| \leq \norm{Q_G}_\infty$, for all $\ve x\in
  \Delta$.

In order to obtain the other inequality,   we use  H\"older's
inequality
 $\int_{\Delta}
\abs{fg} \d m \leq \norm{f}_p \norm{g}_q$, where  $q$ is such that
$\frac{1}{p}+\frac{1}{q}=1$. For any (say) continuous function $f$
on $\Delta$, let us denote by $\Delta(f,\epsilon)$ the set $\{\,\ve
x \in \Delta: \abs{f(\ve x)}\geq \norm{f}_\infty -\epsilon\,\}$, and
take for  $g$ the characteristic function of $\Delta(f,\epsilon)$.
We obtain
\begin{equation} \label{fromholder}
(\norm{f}_{\infty}-\epsilon) (\vol{\Delta(f,\epsilon)})^{1/p} \leq
\norm{f}_p.
\end{equation}
Let $\ve a$ be a point of $\Delta$ where the maximum of $Q_G$ is
attained. Since $\frac{\partial Q_G}{\partial x_i}=\sum_{(i,j) \in
E(G)} x_j$ we know that $0 \leq \frac{\partial Q_G}{\partial x_i}
\leq 1$ for $\ve x\in \Delta$.  Since $\Delta$  is convex, we
conclude that for any $\ve x\in \Delta$,
$$ 0 \leq Q_G(\ve a) -Q_G(\ve x) \leq \sum_{i=1}^n \abs{a_i -x_i}. $$
Thus $\Delta(Q_G,\epsilon)$ contains the set $C_\epsilon=\{\, \ve
x\in\Delta : \sum_{i=1}^n \abs{a_i
  -x_i}<\epsilon \}$.
We claim that $\vol (C_\epsilon)\geq (\frac{\epsilon}{4})^{n-1}$ .
This claim proves the left inequality of the lemma when we apply it
to~\eqref{fromholder}.

Consider the dilated simplex $\frac{\epsilon/2}{1+\epsilon/2}\Delta$
and the  translated set $P_\epsilon=\frac{\ve
a}{1+\epsilon/2}+\frac{\epsilon/2}{1+\epsilon/2}\Delta$. Clearly
$P_\epsilon$ is contained in $\Delta$. Moreover, for $\ve x\in
P_\epsilon$, we have $\sum_{i=1}^n \abs{a_i
  -x_i}\leq \frac{\epsilon}{1+\epsilon/2}\leq \epsilon$, hence  $P_\epsilon$ is contained in $C_\epsilon$.
 Since  $\vol( \Delta)=1$ for the normalized measure,  the volume of
$P_\epsilon$ is equal to $(\frac{\epsilon/2}{1+\epsilon/2})^{n-1}$.
Hence $\vol (P_\epsilon ) \geq (\epsilon/4)^{n-1}$. This finishes
the proof.
\end{proof}

\begin{proof}[Proof of Lemma \ref{motzkinstraus2}]
 In the inequalities of
Lemma \ref{helpfullemma}, we substitute the relation
$\norm{Q_G}_\infty =\frac{1}{2}(1-\frac{1}{\omega (G)})$, given  by
Motzkin--Straus's
  theorem (Theorem~\ref{motzkinstraus1}). We obtain
$$ (\frac{1}{2}(1-\frac{1}{\omega (G)})-\epsilon)(\epsilon/4)^{\frac{n-1}{p} }\leq \norm{Q_G}_p \leq \frac{1}{2}(1-\frac{1}{\omega (G)}).$$
Let us rewrite these inequalities  as
\begin{equation}
\frac{1}{1-2\norm{Q_G}_p} \leq \omega (G) \leq
\frac{1}{1-\frac{2\norm{Q_G}_p} {(\epsilon/4)^{(n-1)/p}}-2
\epsilon}.
\end{equation}
We only need to prove that for $\epsilon = \frac{1}{8n^2}$ and  $p
\geq 4(e-1) n^3\ln(32 n^2)$ we have
\begin{equation} \label{mainclaim}
0 \leq L(p):= \frac{1}{1-\frac{2\norm{Q_G}_p}
{(\epsilon/4)^{\frac{n-1}{p}}}-2\epsilon} -
\frac{1}{1-2\norm{Q_G}_p} <1.
\end{equation}
Let us write
\[
\delta_p=\norm{Q_G}_p (\frac{1}{(\epsilon/4)^{\frac{n-1}{p}}}
-1)=\norm{Q_G}_p ((32 n^2)^{\frac{n-1}{p}}-1).
\]

Thus $L(p)$ in Equation \ref{mainclaim} becomes now
\begin{equation} \label{modclaim}
L(p)=\frac{1}{1-2\norm{Q_G}_p} \big(\frac{1}{1-2
\frac{\delta_p+\epsilon} {1-2 \norm{Q_G}_p}}-1\big).
\end{equation}

Since $\norm{Q_G}_p \leq \frac{1}{2} (1-\frac{1}{\omega(G)}) \leq
\frac{1}{2} $, we have  a bound for $\delta_p$
\[
0 \leq \delta_p \leq \frac{1}{2} ((32 n^2)^{\frac{n-1}{p}}-1) .
\]
Let $A=(\frac{4}{\epsilon})^{n-1}= (32 n^2)^{n-1}$. Since we assumed
$p\geq 4(e-1) n^3\ln(32 n^2)$, we have $0\leq \frac{\ln A}{p}< 1$,
hence $0\leq  A^{1/p}-1< (e-1) \frac{ \ln A}{p}$. We obtain
$$
0 \leq \delta_p \leq \frac{e-1}{2} \frac{(n-1)\log(32 n^2)}{p} \leq
\frac{1}{8n^2}.
$$
 Since $\omega (G) \leq n$, we have $1 - 2\norm{Q_G}_p \geq 1/n$.
Hence we have
\[
\frac{2(\delta_p+\epsilon)}{1-2\norm{Q_G}_p} \leq \frac{1}{2n} \leq
\frac{1}{2}.
\]
Finally for any number $0< \alpha <1/2$ we have $\frac{1}{1-\alpha}
< 1+ 2\alpha$, hence applying this fact to Equation \ref{modclaim}
with $\alpha=2 \frac{\delta_p+\epsilon} {1-2 \norm{Q_G}_p}$ we get
\[
L(p) < \frac{1}{1-2\norm{Q_G}_p} \big(
\frac{4(\delta+\epsilon)}{1-2\norm{Q_G}_p} \big) \leq 4n^2
(\delta_p+\epsilon) \leq 1.
\]
This proves Equation \ref{mainclaim} and the lemma.
\end{proof}

\begin{proof}[Proof of Theorem \ref{generalMhard}]
The problem of deciding whether the clique number $\omega(G)$ of a
graph $G$ is greater than  a given number~$K$ is a well-known
NP-complete problem \cite{gareyjohnson}. From Lemma
\ref{motzkinstraus2} we see that checking this is the same as
checking that for  $p = 4(e-1) n^3\ln(32 n^2)$ the integral part of
$\int_{\Delta} (Q_G)^p \d m$ is less than $K^p$. Note that the
polynomial $Q_G(\ve x)^p$ is a power of a quadratic form and can be
encoded as a SIU straight-line program of length $\Order(n^3\log
n\cdot\abs{E(G)}).$ If the computation of the integral
$\int_{\Delta} f\d m$ of a polynomial $f$ could be done in
polynomial time in the input size of $f$, we could then verify the
desired inequality in polynomial time as well.
\end{proof}

\subsection{An extension of a formula of Brion}
In this section, we obtain  several expressions   for the integrals
$\int_{\Delta} e^\ell \d m$  and $\int_\Delta \ell_1^{M_1}\cdots
  \ell_D^{M_D}{\d m}$, where $\Delta\subset \R^n$ is a
simplex and $\ell$, $\ell_j$ are linear forms on $\R^n$. The first
formula, (\ref{eq:integral-exp-big-sum}) in Lemma
\ref{th:integral-big-sum}, is obtained by elementary iterated
integration on the standard simplex. It leads to a computation of
the integral $\int_\Delta \ell_1^{M_1}\cdots
  \ell_D^{M_D}{\d m}$ in terms of the Taylor
  expansion of a certain analytic function associated to $\Delta$
  (Corollary~\ref{th:expansion}),
hence to a   proof of  the complexity result of Theorem
\ref{powersoflinear}.

In the case of one linear form $\ell$ which is  regular,  we recover
in this way the ``short formula'' of Brion as
Corollary~\ref{th:brion}. This result was first obtained
by Brion as a particular case of his theorem on polyhedra
\cite{Brion88}.

\begin{lemma}\label{th:integral-big-sum}
Let $\Delta$ be the simplex that is the convex hull of $(d+1)$
affinely independent vertices $\ve s_1, \ve s_2,\ldots,\ve s_{d+1}$
in $\R^n$, and let~$\ell$ be an arbitrary linear form on~$\R^n$. Then
\begin{equation}\label{eq:integral-exp-big-sum}
\int_\Delta e^\ell\d m = d! \vol(\Delta,\d m)
 \sum_{\ve k \in \N^{d+1}}\frac{ \langle \ell,\ve s_1\rangle ^{k_1}\cdots \langle \ell,\ve s_{d+1}\rangle^{k_{d+1}}} {(|\ve k| + d)!},
\end{equation}
where $|\ve k|=\sum_{j=1}^{d+1} k_j$.
\end{lemma}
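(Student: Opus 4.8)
The plan is to reduce the integral to one over the standard $d$-simplex by the barycentric change of variables, and then to evaluate that integral by expanding the exponential into a power series and using the classical Dirichlet moment formula, which itself comes from elementary iterated integration.

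First I would fix the standard simplex $\sigma = \{\,\ve t=(t_1,\dots,t_{d+1})\in\R^{d+1}: t_j\ge 0,\ \sum_{j=1}^{d+1}t_j=1\,\}$, carrying the $d$-dimensional Lebesgue measure $\d t$ normalized so that $\vol(\sigma,\d t)=\tfrac1{d!}$ (for instance the push-forward of Lebesgue measure on $\{\,\ve u\in\R^d_{\ge 0}:\sum_i u_i\le 1\,\}$ under $\ve u\mapsto(u_1,\dots,u_d,\,1-\sum_i u_i)$). The affine bijection $\phi\colon\sigma\to\Delta$, $\phi(\ve t)=\sum_{j=1}^{d+1}t_j\ve s_j$, scales $d$-dimensional volume by the constant factor $\vol(\Delta,\d m)/\vol(\sigma,\d t)=d!\,\vol(\Delta,\d m)$, so for every continuous function $g$ on $\Delta$,
\[
\int_\Delta g\,\d m = d!\,\vol(\Delta,\d m)\int_\sigma g(\phi(\ve t))\,\d t .
\]
Since $\ell(\phi(\ve t))=\sum_{j=1}^{d+1}\langle\ell,\ve s_j\rangle t_j$, writing $a_j:=\langle\ell,\ve s_j\rangle$, it remains to prove
\[
\int_\sigma \exp\!\left(\sum_{j=1}^{d+1}a_j t_j\right)\d t=\sum_{\ve k\in\N^{d+1}}\frac{a_1^{k_1}\cdots a_{d+1}^{k_{d+1}}}{(|\ve k|+d)!}.
\]

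Next I would expand the integrand. On the compact set $\sigma$ the series $\exp(\sum_j a_j t_j)=\sum_{\ve k\in\N^{d+1}}\frac{a_1^{k_1}\cdots a_{d+1}^{k_{d+1}}}{k_1!\cdots k_{d+1}!}\,t_1^{k_1}\cdots t_{d+1}^{k_{d+1}}$ converges absolutely and uniformly, so it may be integrated term by term. This reduces everything to the moment identity
\[
\int_\sigma t_1^{k_1}\cdots t_{d+1}^{k_{d+1}}\,\d t=\frac{k_1!\cdots k_{d+1}!}{(|\ve k|+d)!},
\]
since substituting it back cancels the factors $k_1!\cdots k_{d+1}!$ and leaves exactly the series above. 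The moment identity is proved by elementary iterated integration on $\sigma$: parametrize $\sigma$ by $t_1,\dots,t_d$ with $t_{d+1}=1-t_1-\dots-t_d$ and integrate the variables out one at a time; each one-dimensional step is a Beta integral $\int_0^c s^a(c-s)^b\,\d s=c^{a+b+1}\tfrac{a!\,b!}{(a+b+1)!}$, and an induction on $d$ finishes it (integrating the innermost variable merges the exponents $k_d,k_{d+1}$ into a single exponent $k_d+k_{d+1}+1$ on the remaining slack variable, and the factor $(k_d+k_{d+1}+1)!$ cancels at the next step; the base case $d=0$ is trivial).

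I expect the only genuinely delicate point to be the bookkeeping in the reduction step — pinning down the normalization of $\d t$ on $\sigma$ so that $\phi$ scales volume by exactly $d!\,\vol(\Delta,\d m)$; the specialization $\ell=0$, which must return $\vol(\Delta,\d m)$ on both sides, is a convenient sanity check. The interchange of summation and integration is routine given uniform convergence on the compact simplex, and the Beta-integral induction, though it requires care in tracking how the exponents accumulate, uses no new idea.
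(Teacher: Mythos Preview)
Your argument is correct. Both you and the paper reduce to a standard simplex by an affine change of variables and then finish with elementary iterated integration, but the organization differs. The paper works on $\Delta_{st}=\{\ve x\in\R^d:x_i\ge0,\ \sum_i x_i\le1\}$ and inducts directly on~$d$ for the integral of the exponential itself: it integrates out $x_d$, applies the induction hypothesis to the inner $(d{-}1)$-dimensional integral over a dilated simplex, and closes the induction with the one-variable identity $\int_0^1\frac{(1-x)^p}{p!}\,e^{ax}\,dx=\sum_{k\ge0}\frac{a^k}{(k+p+1)!}$. You instead expand the exponential into monomials first and then invoke the classical Dirichlet moment formula on the probability simplex, proving that formula by the Beta-integral induction. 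Your route is a bit more modular --- the Dirichlet formula is a standard off-the-shelf result --- while the paper's direct induction sidesteps the (easy) justification of term-by-term integration; underneath, the two computations are the same iterated Beta integrals, just bookkept in a different order.
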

\begin{proof}
Using an affine change of variables, it is enough to prove
(\ref{eq:integral-exp-big-sum}) when $\Delta$ is the $d$-dimensional
standard simplex $\Delta_{st}\subset \R^d$ defined by
$$
\Delta_{st}= \biggl\{\,\ve x\in\R^d : x_i\geq 0, \sum_{i=1}^d x_i\leq 1\biggr\}.
$$
The volume of $\Delta_{st}$ is equal to $\frac{1}{d!}$. In the case
of $\Delta_{st}$, the vertex  $\ve s_j$ is the  basis vector $\ve e_j$ for
$1\leq j\leq d$ and $s_{d+1}=0$. Let $\langle \ell, x\rangle
=\sum_{j=1}^d a_j x_j$. Then (\ref{eq:integral-exp-big-sum}) becomes
$$
\int_{\Delta_{st}}e^{a_1 x_1+ \dots + a_d x_d}\d \ve x= \sum_{\ve
k\in \N^d }\frac{a_1^{k_1}\cdots a_d^{k_d}}{(|\ve k| + d)!}.
$$
We prove it by induction on $d$. For $d=1$, we have
$$
\int_0^1 e^{a x}\d x= \frac{e^a -1}{a}=\sum_{k\geq
0}\frac{a^k}{(k+1)!}.
$$
Let $d>1$. We write
\begin{multline*}
\int_{\Delta_{st}}e^{a_1 x_1+ \dots + a_d x_d}\d \ve x=\\
\int_0^1 e^{a_d x_d} \biggl(\int\limits_{\substack{x_j\geq 0\\
x_1+\dots+ x_{d-1}\leq 1-x_d}} \!\!\!\!\!\!\!\!\!\!\!\!\!
e^{a_1 x_1+ \dots + a_{d-1}
  x_{d-1}}\d x_1\dots \d x_{d-1}\biggr) \d x_d.
\end{multline*}
By the induction hypothesis and an obvious change of variables, the
inner integral is equal to
$$
(1-x_d)^{d-1}\sum_{\ve k \in \N^{d-1}}(1-x_d)^{|\ve
k|}\frac{a_1^{k_1}\cdots a_{d-1}^{k_{d-1}}}{(|\ve k| + d-1)!}.
$$
The result now follows from the relation
$$
\int_0^1 \frac{(1-x)^p}{p!}e^{ax}\d x= \sum_{k\geq 0}\frac{a^k}{(k+
p+ 1)!}.
$$
\end{proof}
\begin{remark}\label{remark-after-th:integral-big-sum}
Let us replace $\ell$ by $t\ell$ in (\ref{eq:integral-exp-big-sum})
and expand in powers of $t$. We obtain the following formula.
\begin{equation}\label{eq:integral-linpower-big-sum}
\int_\Delta \ell^M \d m= d! \vol(\Delta,\d m) \frac{M!}{(M+d)!}
\sum_{\ve k \in \N^{d+1}, |\ve k| = M}\langle \ell,\ve s_1\rangle
^{k_1}\cdots \langle \ell,\ve s_{d+1}\rangle^{k_{d+1}}.
\end{equation}
This relation is a particular case of a result of Lasserre and
Avrachenkov, Proposition \ref{th:lasserre-avrachenkov}, as we will
explain in section \ref{other-formulas} below.
\end{remark}

\begin{theorem}\label{th:powers-of-linear}Let $\Delta$ be the simplex that is the convex hull of $(d+1)$
affinely independent vertices $\ve s_1, \ve s_2,\ldots,\ve s_{d+1}$
in $\R^n$.
\begin{equation}\label{eq:powers-of-linear}
 \sum_{M\in \N} t^M \frac{(M+d)!}{M!} \int_\Delta \ell^M \d m = d!\vol(\Delta, \d
 m) \frac{1}{\prod_{j=1}^{d+1}(1-t\langle \ell ,\ve s_j\rangle)}.
\end{equation}
\end{theorem}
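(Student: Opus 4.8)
The plan is to deduce the identity directly from formula~\eqref{eq:integral-linpower-big-sum} of Remark~\ref{remark-after-th:integral-big-sum}. Abbreviate $v_j=\langle\ell,\ve s_j\rangle$ for $j=1,\dots,d+1$; then \eqref{eq:integral-linpower-big-sum} reads
\[
\frac{(M+d)!}{M!}\int_\Delta \ell^M \d m = d!\,\vol(\Delta,\d m)\sum_{\ve k\in\N^{d+1},\,|\ve k|=M} v_1^{k_1}\cdots v_{d+1}^{k_{d+1}}.
\]
First I would multiply by $t^M$ and sum over $M\in\N$. On the right-hand side, writing $t^M=t^{|\ve k|}=t^{k_1}\cdots t^{k_{d+1}}$ and summing first over the admissible $\ve k$ for a given $M$ and then over $M$ amounts to summing over all of $\N^{d+1}$; this rearrangement is legitimate because, for $t$ in a neighbourhood of~$0$, the series converges absolutely (alternatively one may read the whole computation in the ring of formal power series in~$t$).

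Next I would factor the resulting sum as a product of geometric series,
\[
\sum_{\ve k\in\N^{d+1}}(tv_1)^{k_1}\cdots(tv_{d+1})^{k_{d+1}}
=\prod_{j=1}^{d+1}\ \sum_{k_j\ge 0}(tv_j)^{k_j}
=\prod_{j=1}^{d+1}\frac{1}{1-tv_j},
\]
valid for $\abs t<1/\max_j\abs{v_j}$ and holding trivially when some or all of the $v_j$ vanish (using the empty-sum and empty-product conventions). Combining the two displays yields
\[
\sum_{M\in\N}t^M\frac{(M+d)!}{M!}\int_\Delta\ell^M\d m
= d!\,\vol(\Delta,\d m)\prod_{j=1}^{d+1}\frac{1}{1-t\langle\ell,\ve s_j\rangle},
\]
that is, precisely~\eqref{eq:powers-of-linear}. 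Since both sides are rational functions of~$t$ agreeing on a neighbourhood of the origin, the identity holds as an equality of rational functions.

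A variant that bypasses the Remark would argue straight from Lemma~\ref{th:integral-big-sum}: one recognises $\sum_{|\ve k|=M}v_1^{k_1}\cdots v_{d+1}^{k_{d+1}}$ as the complete homogeneous symmetric polynomial $h_M(v_1,\dots,v_{d+1})$ and quotes its classical generating function $\sum_{M\ge 0}h_M(\ve v)\,t^M=\prod_{j}(1-tv_j)^{-1}$ to reach the same conclusion. I do not anticipate a genuine obstacle here; the one point deserving a word of care is the interchange of the doubly-indexed summation, which is handled by the absolute convergence observation above (or by working formally in~$t$).
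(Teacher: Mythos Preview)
Your proposal is correct and follows essentially the same route as the paper: it invokes formula~\eqref{eq:integral-linpower-big-sum}, multiplies by~$t^M$, sums over~$M$, and recognises the resulting double sum as the product of geometric series $\prod_j(1-t\langle\ell,\ve s_j\rangle)^{-1}$. The paper's proof is terser and does not spell out the convergence discussion or the symmetric-function variant, but the argument is the same.
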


\begin{proof}
We apply Formula (\ref{eq:integral-linpower-big-sum}). Summing up
from $M=0$ to $\infty$,  we recognize  the expansion of the
right-hand side of (\ref{eq:powers-of-linear}) into a product of
geometric series:
\begin{multline*}
 \sum_{M\in \N} t^M \frac{(M+d)!}{M!} \int_\Delta \ell^M \d m =\\
d!\vol(\Delta, \d m)\sum_{M\in \N}t^M \sum_{\ve k \in \N^{d+1}|, \ve
k| = M}\langle \ell,\ve s_1\rangle ^{k_1}\cdots \langle
\ell,\ve s_{d+1}\rangle^{k_{d+1}} .
\end{multline*}

\end{proof}
\autoref{th:powers-of-linear} has an extension   to the integration
of a product of powers of \emph{several} linear forms. The following
formula is implemented in our Maple program \texttt{duality.mpl},
see Table \ref{tab:integration-via-duality}

\begin{corollary}\label{th:expansion}Let $\ell_1,\dots,\ell_D$ be $D$ linear forms on $\R^n$.
We have the following Taylor expansion:
\begin{multline}\label{eq:expansion}
 \sum_{\ve M\in \N^D} t_1^{M_1}\cdots t_D^{M_D}\; \frac{(|\ve
  M|+d)!}{d! \vol(\Delta,\d m)}\int_\Delta \frac{\ell_1^{M_1}\cdots
  \ell_D^{M_D}}{ M_1!\dots M_D!}\d m= \\
  \frac{1}{\prod_{i=1}^{d+1}(1- t_1\langle\ell_1,\ve s_i\rangle - \cdots -t_D\langle\ell_D,\ve s_i\rangle )}.
\end{multline}
\end{corollary}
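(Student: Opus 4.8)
The plan is to deduce \eqref{eq:expansion} from \autoref{th:powers-of-linear} by a single substitution. Introduce formal parameters $t_1,\dots,t_D$ and consider the linear form $\ell_{\ve t}:=t_1\ell_1+\cdots+t_D\ell_D$ on $\R^n$, whose coefficients lie in $\Q[t_1,\dots,t_D]$. Applying the identity \eqref{eq:powers-of-linear} with $\ell$ replaced by $\ell_{\ve t}$ and then setting $t=1$ in \eqref{eq:powers-of-linear} — which is legitimate since $\langle\ell_{\ve t},\ve s_i\rangle=\sum_{j=1}^{D}t_j\langle\ell_j,\ve s_i\rangle$ has no constant term, so both sides remain well-defined formal power series in $t_1,\dots,t_D$ — yields
\[
\sum_{M\in\N}\frac{(M+d)!}{M!}\int_\Delta\ell_{\ve t}^{\,M}\d m
= d!\vol(\Delta,\d m)\,\frac{1}{\prod_{i=1}^{d+1}\bigl(1-\sum_{j=1}^{D}t_j\langle\ell_j,\ve s_i\rangle\bigr)}.
\]

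Next I would expand the left-hand side by the multinomial theorem,
\[
\ell_{\ve t}^{\,M}=\sum_{\ve M\in\N^{D},\,|\ve M|=M}\frac{M!}{M_1!\cdots M_D!}\,t_1^{M_1}\cdots t_D^{M_D}\,\ell_1^{M_1}\cdots\ell_D^{M_D},
\]
and carry this (finite) sum inside the integral. The factor $M!$ cancels against the $1/M!$, and merging the outer sum over $M$ with the inner sum over $\ve M$ with $|\ve M|=M$ into a single sum over $\ve M\in\N^{D}$ gives
\[
\sum_{\ve M\in\N^{D}}\frac{(|\ve M|+d)!}{M_1!\cdots M_D!}\,t_1^{M_1}\cdots t_D^{M_D}\int_\Delta\ell_1^{M_1}\cdots\ell_D^{M_D}\d m
= d!\vol(\Delta,\d m)\,\frac{1}{\prod_{i=1}^{d+1}\bigl(1-t_1\langle\ell_1,\ve s_i\rangle-\cdots-t_D\langle\ell_D,\ve s_i\rangle\bigr)}.
\]
Dividing both sides by $d!\vol(\Delta,\d m)$ produces exactly the Taylor expansion~\eqref{eq:expansion}.

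The only point that needs care is the bookkeeping with formal power series: one must check that the substitution $\ell\mapsto\ell_{\ve t}$ into the identity of \autoref{th:powers-of-linear} is legitimate and that rearranging the double summation is allowed. This is routine — grouping everything by total degree in $t_1,\dots,t_D$, the coefficient of each monomial $t_1^{M_1}\cdots t_D^{M_D}$ on either side is a finite expression; alternatively one may restrict to $\ve t$ in a small neighbourhood of the origin, where all the series converge absolutely, and conclude by identity of analytic functions. I do not expect a genuine obstacle here: the entire analytic content is already carried by \autoref{th:powers-of-linear}, and what remains is merely the multinomial expansion of $(t_1\ell_1+\cdots+t_D\ell_D)^M$. (One could instead prove \eqref{eq:expansion} directly by induction on $d$, mimicking the proof of \autoref{th:integral-big-sum}, but the route through \autoref{th:powers-of-linear} is considerably shorter.)
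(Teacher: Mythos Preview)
Your proof is correct and follows exactly the same route as the paper's proof, which simply says to replace $t\ell$ by $t_1\ell_1+\cdots+t_D\ell_D$ in \eqref{eq:powers-of-linear} and expand in powers $t_1^{M_1}\cdots t_D^{M_D}$. You have merely spelled out the multinomial expansion and the formal-series bookkeeping that the paper leaves implicit.
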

\begin{proof}
Replace $t\ell$ with $t_1\ell_1+\dots+t_D\ell_D$ in
(\ref{eq:powers-of-linear}) and take the expansion in powers
$t_1^{M_1}\cdots t_D^{M_D}$.
\end{proof}

From \autoref{th:powers-of-linear}, we obtain easily the "short
formula" of Brion, in the case of a simplex.
\begin{corollary}[Brion]\label{th:brion}
Let $\Delta$ be as in the previous theorem.  Let $\ell$
be a linear form which is regular w.r.t.~$\Delta$, i.e., $\langle
\ell,\ve s_i\rangle \neq \langle \ell,\ve s_j \rangle$ for any
  pair $i\neq j$. Then we have the following relations.
\begin{equation}\label{eq:brion-powerlinform-regular}
 \int_\Delta \ell^M \d m = d!\vol(\Delta, \d m)\frac{M!}{(M+d)!}
  \Big(\sum_{i=1}^{d+1}\frac{ \langle \ell ,\ve s_i
    \rangle^{M+d}}{\prod_{j\neq i} \langle \ell,\ve s_i-\ve s_j \rangle}\Big).
\end{equation}
\begin{equation}\label{brion}
\int_{\Delta} e^\ell \d m = d!\vol(\Delta, \d m) \sum_{i=1}^{d+1}
\frac{e^{\langle \ell ,\ve s_i\rangle}}{\prod_{j\neq i} \langle
\ell,\ve s_i-\ve s_j \rangle}.
\end{equation}
\end{corollary}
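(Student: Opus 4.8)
The plan is to derive \eqref{eq:brion-powerlinform-regular} directly from \autoref{th:powers-of-linear} by performing a partial fraction decomposition of the rational function on the right-hand side of \eqref{eq:powers-of-linear}, and then to obtain \eqref{brion} either as a consequence of \eqref{eq:brion-powerlinform-regular} (by summing over $M$ with the appropriate weights) or, more cleanly, by repeating the same partial-fraction argument starting from \eqref{eq:integral-exp-big-sum}. I will carry out the first, more elementary route in detail.

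First I would fix the linear form $\ell$ and abbreviate $\alpha_i = \langle \ell, \ve s_i\rangle$ for $i = 1,\dots,d+1$; the regularity hypothesis says exactly that the $\alpha_i$ are pairwise distinct. Then the rational function
\[
\frac{1}{\prod_{j=1}^{d+1}(1 - t\alpha_j)}
\]
appearing on the right of \eqref{eq:powers-of-linear} has only simple poles (at $t = 1/\alpha_i$ for those $i$ with $\alpha_i \neq 0$, together with the behavior at $t=0$ governing the Taylor coefficients). The next step is the standard partial fraction identity: one checks that
\[
\frac{1}{\prod_{j=1}^{d+1}(1 - t\alpha_j)} = \sum_{i=1}^{d+1} \frac{c_i}{1 - t\alpha_i}, \qquad c_i = \prod_{j \neq i} \frac{1}{1 - \frac{\alpha_j}{\alpha_i}} = \prod_{j \neq i} \frac{\alpha_i}{\alpha_i - \alpha_j},
\]
valid when all $\alpha_i \neq 0$ (the case where some $\alpha_i = 0$, i.e.\ $\ell$ vanishes at a vertex, can be handled by a limiting argument or treated separately, since $\prod_{j\neq i}\langle\ell,\ve s_i - \ve s_j\rangle$ still makes sense). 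Expanding each term $\frac{c_i}{1 - t\alpha_i} = c_i \sum_{M \geq 0} \alpha_i^M t^M$ as a geometric series and matching the coefficient of $t^M$ against the left-hand side of \eqref{eq:powers-of-linear} gives
\[
\frac{(M+d)!}{M!} \int_\Delta \ell^M \d m = d!\,\vol(\Delta,\d m) \sum_{i=1}^{d+1} c_i \alpha_i^M = d!\,\vol(\Delta,\d m) \sum_{i=1}^{d+1} \frac{\alpha_i^{M+d}}{\prod_{j\neq i}(\alpha_i - \alpha_j)},
\]
where the last equality uses $c_i \alpha_i^M = \alpha_i^{M+d}/\prod_{j\neq i}(\alpha_i - \alpha_j)$. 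Rearranging and rewriting $\alpha_i - \alpha_j = \langle \ell, \ve s_i - \ve s_j\rangle$ yields \eqref{eq:brion-powerlinform-regular}. For \eqref{brion}, I would multiply \eqref{eq:brion-powerlinform-regular} by $1/M!$, sum over $M \in \N$, and recognize $\sum_M \frac{\langle\ell,\ve s_i\rangle^{M+d}}{(M+d)!}$ — after absorbing the $\frac{M!}{(M+d)!}$ factor — as the tail of the exponential series; more directly, one substitutes the partial fraction expansion into \eqref{eq:integral-exp-big-sum} read as a sum of geometric series in a formal parameter, exactly paralleling the proof of \autoref{th:powers-of-linear}, and sums $\sum_{k\geq 0} \alpha_i^k/(k+d)! $ against the vertex data to get $e^{\alpha_i}/\prod_{j\neq i}(\alpha_i-\alpha_j)$ up to the $d!\vol$ normalization.

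I do not expect a genuine obstacle here — the result is essentially bookkeeping once \autoref{th:powers-of-linear} is in hand. The one point requiring a little care is the degenerate case in which $\ell$ vanishes at one of the vertices (so some $\alpha_i = 0$): then the clean form $c_i = \prod_{j\neq i}\alpha_i/(\alpha_i - \alpha_j)$ above is not literally valid, but the final formulas \eqref{eq:brion-powerlinform-regular} and \eqref{brion} are still correct and can be recovered either by a continuity/limit argument in $\ell$ (both sides are continuous in $\ell$ on the regular locus, which is dense) or by redoing the partial fraction decomposition keeping the factor $(1 - t\cdot 0) = 1$ separate. Apart from that, the main thing to verify is that the formal geometric-series manipulations converge or can be interpreted as identities of formal power series in $t$, which is immediate since we are only comparing Taylor coefficients.
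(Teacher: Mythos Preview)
Your proposal is correct and follows essentially the same route as the paper: partial fraction decomposition of the right-hand side of \eqref{eq:powers-of-linear} to obtain \eqref{eq:brion-powerlinform-regular}, then summing over $M$ (equivalently, expanding $e^\ell$) to obtain \eqref{brion}. Your explicit computation of the residues $c_i$ and the discussion of the degenerate case $\alpha_i=0$ simply flesh out details that the paper's three-line proof leaves implicit.
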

\begin{proof}
We consider the right-hand side of (\ref{eq:powers-of-linear}) as a
rational function of $t$. The poles $t= 1/\langle \ell,\ve
s_i\rangle$
 are simple precisely when $\ell$ is regular. In this case, we
obtain (\ref{eq:brion-powerlinform-regular}) by taking the expansion
into partial fractions. The second relation follows immediately by
expanding $e^\ell$.
\end{proof}
When $\ell$ is regular, Brion's formula is very short, it is a sum
of $d+1$ terms. When $\ell$ is not regular, the expansion of
(\ref{eq:powers-of-linear}) into partial fractions  leads to an
expression of  the integral  as a sum of residues.  Let
$K\subseteq\{1,\dots,d+1\}$ be an index set of the different poles
$t= 1/\langle \ell ,s_k\rangle$, and for $k\in K$ let $m_k$ denote
the order of the pole, i.e.,
\begin{displaymath}
  m_k = \#\bigl\{\, i\in\{1,\dots,d+1\} : \langle \ell ,s_i\rangle = \langle \ell ,s_k\rangle \,\bigr\}.
\end{displaymath}
With this notation, we have the following formula, which is
implemented in our Maple program \texttt{waring.mpl}, see Tables
\ref{tab:integration-of-powerlinform} and
\ref{tab:integration-by-decomposition-powerlinform}.
\begin{corollary}\label{th:residue}

\begin{multline}
  \int_{\Delta} \ell^M  \d m =\\
 d!\vol(\Delta, \d m) \frac{M!}{(M+d)!}\sum_{k\in K} \Res_{\epsilon=0} \frac{(\epsilon + \langle
    \ell, \ve s_k \rangle)^{M+d}}
  {\epsilon^{m_k} {\prod\limits_{\substack{i\in K\\ i\neq k}} {(\epsilon +
      \langle \ell, \ve s_k-\ve s_i\rangle )}^{m_i}} }
  \label{eq:integral-via-residues}
\end{multline}
\end{corollary}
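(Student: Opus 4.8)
The plan is to read off the value of the integral as a Taylor coefficient of the rational function appearing in Theorem~\ref{th:powers-of-linear}, and then to turn that coefficient extraction into a sum of residues by invoking the residue theorem on $\Proj^1(\C)$. Write $a_i := \langle\ell,\ve s_i\rangle$ and group the vertices according to the distinct values among them, say $a_k$ for $k\in K$ with multiplicity $m_k$, so that $\prod_{j=1}^{d+1}(1-t a_j) = \prod_{k\in K}(1-t a_k)^{m_k}$ and $\sum_{k\in K} m_k = d+1$. Extracting the coefficient of $t^M$ from both sides of Theorem~\ref{th:powers-of-linear} gives
\[
  \int_\Delta \ell^M \d m = d!\vol(\Delta,\d m)\,\frac{M!}{(M+d)!}\,c_M, \qquad c_M := [t^M]\,\frac{1}{\prod_{k\in K}(1-t a_k)^{m_k}},
\]
so, since $a_k - a_i = \langle\ell,\ve s_k - \ve s_i\rangle$, it suffices to prove
\[
  c_M = \sum_{k\in K}\Res_{\epsilon=0} R_k(\epsilon), \qquad R_k(\epsilon) := \frac{(\epsilon + a_k)^{M+d}}{\epsilon^{m_k}\prod_{i\in K,\, i\neq k}(\epsilon + a_k - a_i)^{m_i}} .
\]
(If $\ell$ vanishes identically on $\Delta$, then $|K|=1$, $c_M = [t^M]1$, and both sides are checked directly; so I assume below that this degenerate case does not occur.)

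First I would express $c_M$ itself as a residue at the origin: putting $g(t) := \bigl(t^{M+1}\prod_{k\in K}(1-t a_k)^{m_k}\bigr)^{-1}$, we have $c_M = \Res_{t=0} g(t)\,\d t$. As a rational $1$-form on $\Proj^1(\C)$, $g(t)\,\d t$ has poles only at $t=0$, at $t = 1/a_k$ for those $k\in K$ with $a_k\neq 0$, and possibly at $t=\infty$. The denominator of $g$ has degree $M+1+\sum_{k:\,a_k\neq 0} m_k$, which is $\geq 2$ away from the excluded degenerate case; hence $g(t)\,\d t$ vanishes to order at least $2$ at infinity and $\Res_{t=\infty} g(t)\,\d t = 0$. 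Since the residues of a rational $1$-form over $\Proj^1(\C)$ sum to $0$,
\[
  c_M = -\sum_{k\in K,\; a_k\neq 0} \Res_{t=1/a_k} g(t)\,\d t .
\]

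Next I would evaluate each of these residues via the local change of coordinate $t = (\epsilon + a_k)^{-1}$, a biholomorphism carrying a neighbourhood of $t = 1/a_k$ onto a neighbourhood of $\epsilon = 0$ (here one uses $a_k\neq 0$). Substituting $1-t a_i = (\epsilon + a_k - a_i)/(\epsilon + a_k)$ and $\d t = -(\epsilon+a_k)^{-2}\d\epsilon$, and using $\sum_{i\in K} m_i = d+1$ together with the fact that the factor with $i=k$ equals $\epsilon^{m_k}$, a direct computation yields $g(t)\,\d t = -R_k(\epsilon)\,\d\epsilon$; as residues of meromorphic $1$-forms are invariant under holomorphic changes of coordinate, $\Res_{t=1/a_k} g(t)\,\d t = -\Res_{\epsilon=0} R_k(\epsilon)$. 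Finally, for the remaining indices $k\in K$ with $a_k = 0$ one has $R_k(\epsilon) = \epsilon^{M+d-m_k}\big/\prod_{i\neq k}(\epsilon - a_i)^{m_i}$, which is holomorphic at $\epsilon = 0$ because $M+d\geq m_k$ outside the degenerate case; hence $\Res_{\epsilon=0} R_k = 0$ for such $k$ and those terms may be added back to the sum for free. Combining the three displays gives $c_M = \sum_{k\in K}\Res_{\epsilon=0} R_k(\epsilon)$, which is (\ref{eq:integral-via-residues}). The only point needing genuine care is the bookkeeping of the pole at $\infty$ and of the vertices with $a_k = 0$; the substitution identity $g(t)\,\d t = -R_k(\epsilon)\,\d\epsilon$ is a routine calculation once it is set up.
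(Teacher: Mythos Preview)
Your proof is correct. The paper does not spell out a proof of this corollary; it only remarks, just before the statement, that ``the expansion of~(\ref{eq:powers-of-linear}) into partial fractions leads to an expression of the integral as a sum of residues.'' Your argument follows the same underlying idea---extract the coefficient of~$t^M$ from the rational function of Theorem~\ref{th:powers-of-linear} and reorganize by pole---but packages it via the global residue theorem on~$\Proj^1(\C)$ together with the change of variable $t=(\epsilon+a_k)^{-1}$, rather than by writing out a partial-fractions decomposition and reassembling. The two routes are equivalent in content; yours has the mild advantage that it handles the multiplicities~$m_k$ uniformly without ever naming the individual partial-fractions coefficients, and your explicit treatment of the pole at infinity and of the indices with $a_k=0$ fills in bookkeeping that the paper leaves implicit.
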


\begin{remark}
 It is worth remarking that  Corollaries \ref{th:brion} and \ref{th:residue} can be seen
as a particular case of the localization theorem in equivariant
cohomology (see for instance \cite{Berline-Getzler-Vergne-1992}),
although we did  not use this fact and  instead gave a simple direct
calculation. In our situation, the variety is the complex projective
space $\C \Proj ^d$, with action of a $d$-dimensional torus, such
that the image of the moment map is the simplex $\Delta$. Brion's
formula corresponds to the generic case of a one-parameter subgroup
acting with isolated fixed points. In the degenerate  case when the
set of fixed points has components  of positive dimension, the polar
parts in  (\ref{eq:integral-via-residues}) coincide with the
contributions of the components to the localization formula.

A formula equivalent to Corollary \ref{th:residue} appears already
in \cite{Barvinok-1991} (3.2).
\end{remark}\smallbreak

\subsection{Polynomial time algorithm for polynomial functions of a fixed number of  linear forms}
\label{s:powerlinform}

\begin{proof}[Proof  of  Theorem \ref{powersoflinear}]
We now  present an  algorithm  which, given a polynomial of the
particular  form $ f(\langle \ell_1,\ve x\rangle,\cdots, \langle
\ell_D,\ve x\rangle)$ where $f$ is a polynomial depending on a fixed
number~$D$ of variables, and  $\langle \ell_j,\ve x\rangle =L_{j
1}x_1+\cdots+L_{j n} x_n$, for $j=1,\dots,D$, are  linear forms on
$\R^n$, computes its integral on a simplex, in time polynomial on
the input data. This algorithm relies on Corollary
\ref{th:expansion}.

The number of monomials of degree $M$ in $D$ variables is equal to
$\binom{M+D-1}{D-1}$. Therefore, when $D$ is fixed, the number of
monomials of degree at most $M$ in $D$ variables is $\Order(M^D)$.
When the number of variables~$D$ of a straight-line program~$\Phi$
is fixed, it is possible to compute a sparse or dense representation
of the polynomial represented by~$\Phi$ in polynomial time, by a
straight-forward execution of the program.  Indeed, all intermediate
results can be stored as sparse or dense polynomials with
$\Order(M^D)$ monomials. Since the program~$\Phi$ is
single-intermediate-use, the binary encoding size of all
coefficients of the monomials can be bounded polynomially by the
input encoding size. Thus it is enough to compute  the integral of a
monomial,
\begin{equation}\label{eq:1}
 \int _\Delta \langle \ell_1,\ve x\rangle^{M_1}\cdots
\langle \ell_D,\ve x\rangle^{M_D} \d m.
\end{equation}
From Corollary \ref{th:expansion}, it follows  that
$$
 \frac{(|\ve
  M|+d)!}{d! \vol(\Delta,\d m)}\int_\Delta \frac{\ell_1^{M_1}\cdots
  \ell_D^{M_D}}{ M_1!\cdots M_D!}\d m
  $$
  is the coefficient of $ t_1^{M_1}\cdots
  t_D^{M_D}$ in the Taylor expansion of
  $$
\frac{1}{\prod_{i=1}^{d+1}(1- t_1\langle\ell_1,\ve s_i\rangle -
\cdots -t_D\langle\ell_D,\ve s_i\rangle )}.
  $$
  Since $D$
is fixed,  this coefficient  can be computed in time polynomial with
respect to $\ve M $ and the input data, by multiplying  series
truncated at degree $|\ve M|$, as explained in Lemma
\ref{truncated-product}.

  Finally,
  $\vol(\Delta, \d m)$ needs to be computed.
  If $\Delta=\conv\{\ve
  s_1,\dots,\ve s_{d+1}\}$ is full-dimensional ($d = n$), we can do so by
  computing the determinant of the matrix formed by difference vectors of the vertices:
  \begin{displaymath}
    \vol(\Delta, \d m)=\frac1{n!} \abs{\det(\ve s_1-\ve s_{n+1},
      \dots, \ve s_{n}-\ve s_{n+1})}.
  \end{displaymath}
  If $\Delta$ is lower-dimensional, we first compute a basis~$B\in\Z^{n\times
    d}$ of the intersection lattice~$\Lambda = \lin(\Delta)\cap\Z^n$.  This
  can be done in polynomial time by applying an efficient algorithm for
  computing the Hermite normal form \cite{kannan-bachem:79}.  Then we express
  each difference vector $\ve v_i = \ve s_i-\ve s_{d+1} \in \lin(\Delta)$ for $i=1,\dots,d$
  using the basis~$B$ as $\ve v_i = B \ve v_i'$, where $\ve v_i\in\Q^d$.  We obtain
  \begin{displaymath}
    \vol(\Delta, \d m) = \frac1{d!} \abs{\det(\ve v_1',\dots, \ve v_d')},
  \end{displaymath}
  thus the volume computation is reduced to the calculation of a determinant.
This finishes the proof of Theorem \ref{powersoflinear}. \end{proof}

\subsection{Polynomial time algorithms for polynomials of fixed
degree}\label{s:fixed_degree}

In the present section, we  assume that the total degree of the
input polynomial $f$ we wish to integrate is a constant $M$.
\begin{proof}[Proof of Corollary \ref{fixMpolytime}]
First of all, when the formal degree~$M$ of a straight-line
program~$\Phi$ is fixed, it is possible to compute a sparse or dense
representation of the polynomial represented by~$\Phi$ in polynomial
time, by a straight-forward execution of the program.  Indeed, all
intermediate results can be stored as sparse or dense polynomials
with $\Order(n^M)$ monomials.  Since the program~$\Phi$ is
single-intermediate-use, the binary encoding size of all
coefficients of the monomials can be bounded polynomially by the
input encoding size.

Now, the key observation is that a monomial of degree at most~$M$ depends
effectively on $D\leq M$ variables $x_{i_1}$, \dots, $x_{i_D}$, thus it is of
the form
$$
\ell_1^{M_1}\cdots \ell_D^{M_D},
$$
where the linear forms $\ell_j(\ve x) = x_{i_j}$ are the coordinates
that effectively appear in the monomial. Thus,
\autoref{fixMpolytime} follows immediately from
\autoref{powersoflinear}. This method is implemented in our Maple
program \texttt{duality.mpl}, see Tables
\ref{tab:integration-via-duality} and
\ref{tab:integration-dense-poly-via-duality}.
\end{proof}

\begin{remark}\label{method_with_Taylor}
The relations in Corollary \ref{th:brion} can be interpreted as
equalities between meromorphic functions of $\ell$. The right-hand
side is a sum of meromorphic functions whose poles cancel out, so
that the sum is actually analytic. We derive from this another
polynomial time algorithm for computating  the integral
$$
 \int_{\Delta} x_1^{m_1}\cdots  x_d^{m_d} \d m.
$$
More precisely, let us write $\langle\ell,\ve x \rangle= y_1 x_1 +
\cdots + y_d x_d$. Then the integral $ \int_{\Delta} x_1^{m_1}\cdots
x_d^{m_d} \d m$ is the coefficient of $ \frac{y_1^{m_1}\ldots
y_d^{m_d}}{m_1!\cdots m_d!}$ in the  Taylor expansion of
$\int_{\Delta}e^{y_1 x_1 + \ldots y_d x_d}\d m$. We compute it by
taking the expansion of each of the terms of the right hand-side of
Equation (\ref{brion}) into an iterated Laurent series with respect
to the variables  $x_1,\ldots, x_d$. This method is implemented in
our Maple program \texttt{iterated-laurent.mpl}, see Tables
\ref{tab:integration-dense-poly-via-iterated} and
\ref{tab:integration-few-effective-via-iterated}.
\end{remark}

In the following, we give another proof of Corollary \ref{fixMpolytime}, based
on decompositions of polynomials
as sums of powers of linear forms.

\begin{proof}[Alternative proof of \autoref{fixMpolytime}]\label{proof-by-waring}

From  Corollaries \ref{th:brion} and \ref{th:residue}, we derive
another efficient algorithm, as follows. The key idea now is that
one can decompose the polynomial $f$ as a sum
$f:=\sum_{\ell}c_{\ell} \ell_j^M$ with at most $2^M$ terms in the
sum. We use the well-known identity
\begin{multline}
  x_1^{M_1}x_2^{M_2}\cdots x_n^{M_n}\\
  = \frac{1}{|\ve M|!} \sum_{0\leq p_i\leq M_i}(-1)^{|\ve M|-(p_1+\cdots+p_n)}
  \binom{M_1}{p_1}\cdots \binom{M_n}{p_n}(p_1 x_1+\cdots+p_n x_n)^{|\ve M|},
  \label{eq:decomp-powerlinform}
\end{multline}
where $|\ve M| = M_1+\cdots+M_n \leq M$.

In the  implementation of  this method, we may group together
proportional linear forms. The number $F(n,M)$ of primitive vectors
$(p_1,\dots, p_n)$ which appear in the decomposition of a polynomial
of total degree $\leq M$ is given by the following closed formula.
\footnote{Lemma \ref{number_of_primitive} was kindly supplied by
Christophe Margerin.}.

\begin{lemma}\label{number_of_primitive}
Let
\begin{multline*}F(n,M)=\\ \card(\{(p_1,\dots, p_n)\in \N^n, \gcd(p_1,\dots,
p_n)= 1, 1\leq \sum_i p_i\leq  M  \}).\end{multline*}

Then  \begin{equation}\label{eq:number_of_primitive} F(n,M)=
\sum_{d=1}^M \mu(d)(\binom{n+[\frac{M}{d}]}{n}-1),
\end{equation}
where $\mu(d)$ is the M\"{o}bius function.

When $M$ is fixed and $n\to \infty$ we have
$$F(n,M)=\frac{n^M}{M!}+ \Order(n^{M-1}).$$
\end{lemma}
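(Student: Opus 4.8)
The plan is to prove the formula~\eqref{eq:number_of_primitive} by a
standard M\"obius inversion over the divisibility poset, and then to
extract the asymptotics from the dominant term $d=1$.

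\medskip\noindent\textbf{Step 1: Stratify by the gcd.}
First I would introduce the auxiliary counting function
$$
G(n,M) := \card\bigl(\{\,(p_1,\dots,p_n)\in\N^n : 1\le \textstyle\sum_i p_i\le M\,\}\bigr),
$$
i.e.\ the number of \emph{all} nonzero nonnegative integer vectors with
coordinate sum at most $M$. By a classical stars-and-bars count (adding a
slack variable $p_0$ with $\sum_{i\ge0}p_i = M$, $M-1$, $\dots$, $1$, or
directly by the hockey-stick identity), one gets
$G(n,M) = \binom{n+M}{n} - 1$. Next, every nonzero vector $\ve p\in\N^n$ with
$\sum_i p_i\le M$ has a well-defined $g := \gcd(p_1,\dots,p_n)\ge 1$, and
writing $\ve p = g\ve q$ sets up a bijection between such vectors with
$\gcd = g$ and primitive vectors $\ve q$ with $1\le\sum_i q_i\le \lfloor
M/g\rfloor$. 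Hence, partitioning by the value of $g$ (which ranges over
$1,\dots,M$ since $\sum p_i\ge g$),
$$
G(n,M) = \sum_{g=1}^{M} F\!\left(n,\Bigl\lfloor\frac{M}{g}\Bigr\rfloor\right),
\qquad\text{that is,}\qquad
\binom{n+M}{n}-1 = \sum_{g=1}^{M} F\!\left(n,\Bigl\lfloor\tfrac{M}{g}\Bigr\rfloor\right).
$$

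\medskip\noindent\textbf{Step 2: M\"obius inversion.}
The identity above has exactly the shape to which M\"obius inversion for the
``floor'' convolution applies (the same mechanism as in the inversion between
$\sum_{d\le x} f(\lfloor x/d\rfloor)$ and $f$). Applying it, or equivalently
substituting and using $\sum_{d\mid m}\mu(d) = [m=1]$ after re-indexing the
double sum, yields
$$
F(n,M) = \sum_{d=1}^{M}\mu(d)\left(\binom{n+\lfloor M/d\rfloor}{n}-1\right),
$$
which is~\eqref{eq:number_of_primitive}. I would present this carefully: set
$h(M) = \binom{n+M}{n}-1 = \sum_{g\ge1} F(n,\lfloor M/g\rfloor)$ for the
finitely many relevant $M$, then
$\sum_{d\ge1}\mu(d)h(\lfloor M/d\rfloor) = \sum_{d,g\ge1}\mu(d)F(n,\lfloor
M/(dg)\rfloor) = \sum_{m\ge1}F(n,\lfloor M/m\rfloor)\sum_{d\mid m}\mu(d)
= F(n,M)$.

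\medskip\noindent\textbf{Step 3: Asymptotics.}
For fixed $M$ and $n\to\infty$, the term $d=1$ in~\eqref{eq:number_of_primitive}
contributes $\binom{n+M}{n}-1 = \frac{n^M}{M!} + \Order(n^{M-1})$, since
$\binom{n+M}{M} = \frac{(n+M)(n+M-1)\cdots(n+1)}{M!}$ is a monic-leading
polynomial in $n$ of degree $M$ with leading coefficient $1/M!$. Every term
with $d\ge 2$ has $\lfloor M/d\rfloor\le M/2 < M$, so it contributes
$\Order(n^{\lfloor M/2\rfloor}) = \Order(n^{M-1})$ (here I use $M\ge 1$; for
$M=1$ there is only the $d=1$ term anyway). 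Summing the $\Order(M)$-many
error terms keeps them $\Order(n^{M-1})$ since $M$ is fixed, giving
$F(n,M) = \frac{n^M}{M!} + \Order(n^{M-1})$.

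\medskip\noindent\textbf{Expected main obstacle.}
None of the steps is deep; the only place requiring care is making the
M\"obius inversion in Step~2 rigorous with the \emph{floor} function rather
than ordinary divisor sums --- one must check that interchanging the order of
summation in the finite double sum is legitimate and that the substitution
$m = dg$ correctly regroups the terms. This is routine but is the step most
prone to an off-by-one or range error, so I would write it out explicitly as
above rather than merely cite ``M\"obius inversion.''
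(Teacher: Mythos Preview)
Your proposal is correct and follows essentially the same approach as the paper: define the unrestricted count $G(n,M)=\binom{n+M}{n}-1$, stratify by the gcd to get $G(n,M)=\sum_{d=1}^M F(n,\lfloor M/d\rfloor)$, and invert via the ``second form'' of M\"obius inversion; the asymptotics are read off from the $d=1$ term. Your writeup is in fact more explicit than the paper's---you spell out the floor-convolution inversion and the bound on the $d\ge 2$ terms---but the ideas are identical.
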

\begin{proof}
Let $G(n,M)= \card(\{(p_1,\dots, p_n)\in \N^n,  1\leq \sum p_i\leq M
\})$. By grouping together the vectors $(p_1,\dots, p_n)$ with a
given $\gcd$ $d$, we obtain
$$
G(n,M)=\sum_{d=1}^M F(n, [\frac{M}{d}]).
$$
Moreover the number of all integral vectors $(p_1,\dots, p_n)\in
\N^n$ such that $\sum_i p_i\leq  M $ is equal to the binomial
coefficient $\binom{n+M}{n}$. When we omit the zero-vector we obtain
$$
G(n,M)= \binom{n+M}{n}-1.
$$
Then we obtain (\ref{eq:number_of_primitive}) by applying the second
form of M\"{o}bius inversion formula.  The asymptotics follow easily
from (\ref{eq:number_of_primitive}).
\end{proof}
Thus Formula (\ref{eq:decomp-powerlinform}), together with Corollary
\ref{th:residue},  give another polynomial time algorithm for
integrating a polynomial of fixed degree. It is implemented in our
Maple program \texttt{waring.mpl}, see Tables
\ref{tab:integration-by-decomposition-powerlinform} and
\ref{tab:integration-dense-poly-by-decomposition-powerlinform}.
\end{proof}

 The problem of finding a decomposition
 with the smallest possible number of summands
 is known as the \emph{polynomial Waring problem}.  Alexander and Hirschowitz
 solved the generic problem (see
\cite{alexanderhirschowitz}, and \cite{brambillaottaviani} for an
extensive survey).
\begin{theorem}\label{th:alexanderhirschowitz} The smallest integer
$r(M, n)$ such that a generic homogeneous polynomial  of degree~$M$
in $n$ variables is expressible as the sum of $r(M,n)$ $M$-th powers
of linear forms is given by
$$r(M, n)=\bigg\lceil \frac{{{n+M-1} \choose M}}{n} \bigg\rceil,$$
 with the exception of the cases $r(3,
5) = 8$, $r(4, 3) = 6$, $r(4, 4) = 10$, $r(4, 5) = 15$, and $M = 2$,
where $r(2, n) = n$.
\end{theorem}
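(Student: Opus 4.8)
The plan is to recast the polynomial Waring problem as a statement about dimensions of secant varieties of Veronese varieties and then invoke the inductive degeneration argument of Alexander and Hirschowitz. Fix $M$ and $n$, write $N=\binom{n+M-1}{M}$ for the dimension of the space $S$ of homogeneous polynomials of degree $M$ in $x_1,\dots,x_n$, let $L\subset S$ be the $n$-dimensional space of linear forms, and let $V=\Proj(L)\subseteq\Proj^{N-1}=\Proj(S)$ be the $M$-th Veronese variety, i.e.\ the locus of classes $[\ell^M]$ of $M$-th powers of linear forms. A generic form of degree $M$ is a sum of $r$ such powers precisely when the $r$-th secant variety $\sigma_r(V)$ --- the Zariski closure of the union of the linear spans of $r$-tuples of points of $V$ --- equals all of $\Proj^{N-1}$. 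The natural parametrization, from $V^{\times r}\times\Proj^{r-1}$, has $r(n-1)+(r-1)=rn-1$ parameters, so the \emph{expected} dimension of $\sigma_r(V)$ is $\min\{rn-1,\,N-1\}$, and one expects $\sigma_r(V)=\Proj^{N-1}$ as soon as $r\geq\lceil N/n\rceil$ while $\sigma_{r-1}(V)$ is still a proper subvariety. Theorem~\ref{th:alexanderhirschowitz} asserts that this expected behaviour does hold for all $(M,n)$ outside the stated finite list, whereas on that list the minimal $r$ is strictly larger.

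First I would reduce the computation of $\dim\sigma_r(V)$ to a polynomial interpolation problem. By Terracini's Lemma, at a generic point of $\sigma_r(V)$ lying on the span $\langle[\ell_1^M],\dots,[\ell_r^M]\rangle$ the affine tangent space to the cone over $\sigma_r(V)$ is $\sum_{i=1}^r\ell_i^{M-1}L$, hence $\dim\sigma_r(V)=\dim\bigl(\sum_i\ell_i^{M-1}L\bigr)-1$. Via apolarity (the Macaulay duality between forms and constant-coefficient differential operators) the codimension of $\sum_i\ell_i^{M-1}L$ inside $S$ is identified with the dimension of the space of degree-$M$ forms on $\Proj^{n-1}$ vanishing to order at least $2$ at each of $r$ general points, i.e.\ with $h^0\bigl(\Proj^{n-1},\mathcal I_{2Z}(M)\bigr)$, where $2Z$ is the scheme of $r$ general double (first infinitesimal neighborhood) points. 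A single double point imposes $n$ linear conditions on forms of degree $M$ (Euler's relation removes one of the naive $n+1$), so $\sigma_r(V)$ has the expected dimension if and only if these conditions are independent, i.e.\ $h^0(\mathcal I_{2Z}(M))=\max\{0,\,N-rn\}$. Taking $r=\lceil N/n\rceil$ this reads $h^0=0$, which is exactly the assertion that the generic degree-$M$ form is a sum of $r$ powers of linear forms but not of $r-1$.

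The core is then the Alexander--Hirschowitz interpolation theorem: for $r$ general double points in $\Proj^{n-1}$ one has $h^0(\mathcal I_{2Z}(M))=\max\{0,\,N-rn\}$ for all $(M,n)$ except $M=2$ and $(M,n)\in\{(3,5),(4,3),(4,4),(4,5)\}$. This is proved by a double induction on $M$ and $n$ via the differential Horace method: one specializes the supports of a carefully chosen subset of the double points onto a hyperplane $H\cong\Proj^{n-2}$, replacing each such point by a suitable ``half'' of its $2$-jet, and feeds the resulting scheme $W$ into the Castelnuovo restriction sequence
\[
0\longrightarrow\mathcal I_{W'}(M-1)\longrightarrow\mathcal I_{W}(M)\longrightarrow\mathcal I_{W\cap H,\,H}(M)\longrightarrow 0,
\]
where $W'$ is the residual of $W$ with respect to $H$ and $W\cap H$ its trace on $H$; passing to cohomology reduces the vanishing in degree $M$ on $\Proj^{n-1}$ to the analogous statements in degree $M-1$ on $\Proj^{n-1}$ and in degree $M$ on $\Proj^{n-2}$, both chosen to lie in the non-defective range. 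A finite collection of small base cases is settled by direct computation, and the exceptional families are precisely those where no admissible specialization closes the induction. There defectivity is checked by hand: for $M=2$ a generic quadratic form in $n$ variables has rank $n$, so $r(2,n)=n$; for each of the four sporadic pairs the relevant secant variety turns out to be a hypersurface rather than the full space, so $r(M,n)=\lceil N/n\rceil+1$. I refer to \cite{alexanderhirschowitz} for the original proof and to \cite{brambillaottaviani} for a detailed exposition.

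The step I expect to be the genuine obstacle is this last one: the bookkeeping of the Horace induction --- at each stage one must distribute the double points between $H$ and its complement so that \emph{both} the residual and the trace interpolation problems fall back into the non-defective range and so that the numerics balance --- is intricate, and correctly isolating the finite exceptional list (and disposing of each member by an independent argument) is what makes the original proof long. For the present paper, however, only the \emph{existence} of a decomposition with the stated number of summands is needed, so Theorem~\ref{th:alexanderhirschowitz} is used purely as a black box; the explicit identity \eqref{eq:decomp-powerlinform} already furnishes a decomposition (with more terms in general, but a number bounded in terms of $M$ alone) that suffices for our complexity results.
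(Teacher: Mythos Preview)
Your outline is a faithful sketch of the Alexander--Hirschowitz argument (Terracini reduction to double-point interpolation, then the differential Horace induction), and you correctly note at the end that the present paper uses the result only as a black box. Indeed, the paper gives no proof of this theorem at all: it is simply quoted from \cite{alexanderhirschowitz} (with \cite{brambillaottaviani} cited for exposition), so there is nothing to compare your approach against---your last paragraph already captures the paper's actual treatment.
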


 An algorithm for
decomposing a given polynomial into the smallest possible number of
powers of linear forms can be found in \cite{brachat}.

  In the extreme case, when the polynomial~$f$ happens to be the
power of one linear form $\ell$, one should certainly avoid applying the above
decomposition formula to each of the monomials of~$f$.  We remark that, when
the degree is fixed, we can decide in polynomial time
whether a polynomial $f$, given in sparse or dense monomial representation, is
a power of a linear form $\ell$, and, if so, construct such a linear form.

\section{Other  algorithms for integration and extensions to other polytopes}
\label{otheralgsextensions}

We conclude with a discussion of how to extend integration to other polytopes and
a review of the complexity of other methods to integrate polynomials over polytopes.

\subsection{A formula of Lasserre--Avrachenkov}
\label{s:lasserre-avrachenkov}
Another nice formula is the Lasserre--Avrachenkov formula for the
integration of a homogeneous polynomial
\cite{lasserre-avrachenkov2001} on a simplex. As we explain below,
this yields a polynomial-time algorithm for the problem of
integrating a polynomial of fixed degree over a polytope in varying
dimension, thus providing an alternative proof of
\autoref{fixMpolytime}.

\begin{proposition}[\cite{lasserre-avrachenkov2001}]
  \label{th:lasserre-avrachenkov}
Let $H$ be a symmetric  multilinear form defined on $(\R^d)^M$. Let
$\ve s_1,\ve s_2,\dots,\ve s_{d+1}$ be the vertices of a
$d$-dimensional simplex $\Delta$. Then one has
\begin{equation}\label{eq:multilin-integral}
\int_\Delta H( \ve x, \ve x,\dots, \ve x) d \ve
x=\frac{\vol(\Delta)}{\binom{M+d}{M}} \sum_{1 \leq i_1 \leq  i_2
\leq \cdots  i_M \leq d+1} \!\!\!\!\! H( \ve s_{i_1}, \ve
s_{i_2},\dots, \ve s_{i_M}).
\end{equation}
\end{proposition}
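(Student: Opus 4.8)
The plan is to reduce~\eqref{eq:multilin-integral} to the special case where $H$ is a product of copies of a single linear form, for which formula~\eqref{eq:integral-linpower-big-sum} of Remark~\ref{remark-after-th:integral-big-sum} does all the work, and then to finish by linearity. Both sides of~\eqref{eq:multilin-integral} are linear functionals of~$H$ on the vector space of symmetric $M$-linear forms on~$(\R^d)^M$. For a linear form~$\ell$ on~$\R^d$ put $H_\ell(\ve x_1,\dots,\ve x_M)=\langle\ell,\ve x_1\rangle\cdots\langle\ell,\ve x_M\rangle$; this is symmetric and multilinear, and $H_\ell(\ve x,\dots,\ve x)=\langle\ell,\ve x\rangle^M$. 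Under the classical polarization isomorphism $H\mapsto H(\ve x,\dots,\ve x)$ between symmetric $M$-linear forms and homogeneous polynomials of degree~$M$ in $d$ variables, the forms~$H_\ell$ correspond to the $M$-th powers $\langle\ell,\cdot\rangle^M$ of linear forms; since the latter already span all homogeneous polynomials of degree~$M$ — this is exactly what the decomposition identity~\eqref{eq:decomp-powerlinform} gives when applied to each degree-$M$ monomial — the forms~$H_\ell$ span the whole space. Hence it is enough to verify~\eqref{eq:multilin-integral} for $H=H_\ell$.

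For $H=H_\ell$ the left-hand side of~\eqref{eq:multilin-integral} is $\int_\Delta\langle\ell,\ve x\rangle^M\d m$, which by~\eqref{eq:integral-linpower-big-sum} equals $d!\,\vol(\Delta,\d m)\,\frac{M!}{(M+d)!}\sum_{\ve k\in\N^{d+1},\,|\ve k|=M}\langle\ell,\ve s_1\rangle^{k_1}\cdots\langle\ell,\ve s_{d+1}\rangle^{k_{d+1}}$. The right-hand side, with $H(\ve s_{i_1},\dots,\ve s_{i_M})=\langle\ell,\ve s_{i_1}\rangle\cdots\langle\ell,\ve s_{i_M}\rangle$, is $\frac{\vol(\Delta)}{\binom{M+d}{M}}\sum_{1\le i_1\le\cdots\le i_M\le d+1}\langle\ell,\ve s_{i_1}\rangle\cdots\langle\ell,\ve s_{i_M}\rangle$; rewriting each weakly increasing tuple $(i_1,\dots,i_M)$ through its multiplicity vector~$\ve k$ (with $k_j$ the number of indices~$a$ for which $i_a=j$) identifies this sum with $\sum_{\ve k\in\N^{d+1},\,|\ve k|=M}\langle\ell,\ve s_1\rangle^{k_1}\cdots\langle\ell,\ve s_{d+1}\rangle^{k_{d+1}}$, exactly the sum appearing on the left. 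Finally $d!\,\frac{M!}{(M+d)!}=\bigl(\frac{(M+d)!}{M!\,d!}\bigr)^{-1}=1/\binom{M+d}{M}$, so the two sides coincide and the reduction — hence the proposition — is complete.

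I do not expect a genuine obstacle here; the only step that is not purely mechanical is the spanning claim in the first paragraph (the polarization isomorphism together with the fact, supplied by~\eqref{eq:decomp-powerlinform}, that $M$-th powers of linear forms span the degree-$M$ homogeneous polynomials). A fully self-contained variant that avoids citing~\eqref{eq:integral-linpower-big-sum} is the direct computation originally used by Lasserre and Avrachenkov: write $\ve x\in\Delta$ in barycentric coordinates $\ve x=\sum_{i=1}^{d+1}t_i\ve s_i$, expand $H(\ve x,\dots,\ve x)=\sum_{i_1,\dots,i_M}t_{i_1}\cdots t_{i_M}\,H(\ve s_{i_1},\dots,\ve s_{i_M})$ by multilinearity, and integrate termwise using the Dirichlet moment formula $\int_\Delta t_1^{k_1}\cdots t_{d+1}^{k_{d+1}}\d m=d!\,\vol(\Delta,\d m)\,\frac{k_1!\cdots k_{d+1}!}{(|\ve k|+d)!}$, obtained by the same iterated integration used in the proof of Lemma~\ref{th:integral-big-sum}; the $\frac{M!}{k_1!\cdots k_{d+1}!}$ ordered tuples sharing a given multiplicity vector~$\ve k$, all giving the same value of~$H$ by symmetry, combine with this moment to produce precisely the factor $1/\binom{M+d}{M}$ in front of each distinct term $H(\ve s_{i_1},\dots,\ve s_{i_M})$ with $i_1\le\cdots\le i_M$.
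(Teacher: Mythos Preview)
The paper does not actually give its own proof of this proposition: it is quoted from \cite{lasserre-avrachenkov2001}, and the only argument the paper supplies is the observation in the Remark immediately following it that, for $H=H_\ell=\prod_{i=1}^M\langle\ell,\ve x_i\rangle$, the reindexed formula~\eqref{eq:multilin-integral-2} coincides with~\eqref{eq:integral-linpower-big-sum}. Your first proof is correct and is precisely the natural completion of that remark: you add the spanning argument (polarization plus~\eqref{eq:decomp-powerlinform}) needed to pass from the special case $H=H_\ell$ to the general symmetric multilinear~$H$ by linearity. So your approach is in the same spirit as what the paper sketches, only carried through.

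Your alternative barycentric/Dirichlet proof is also correct and is closer to the original argument of Lasserre and Avrachenkov; it is more self-contained since it does not rely on~\eqref{eq:integral-linpower-big-sum}, at the modest cost of recomputing the moment formula.
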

\begin{remark}
By reindexing the summation in (\ref{eq:multilin-integral}), as $H$
is symmetric, we obtain
\begin{eqnarray}\label{eq:multilin-integral-2}
\nonumber \int_\Delta H( \ve x, \ve x,\dots, \ve x) d \ve x
=&&\\
\frac{\vol(\Delta)}{\binom{M+d}{M}}\sum_{k_1+\cdots +k_{d+1}=M}&&
H(\ve s_1,\dots, \ve s_1,\dots,\ve s_{d+1},\dots,\ve s_{d+1}),
\end{eqnarray}
where $\ve s_1$ is repeated $k_1$ times, $\ve s_2$ is repeated $k_2$
times, etc. When $H$ is of the form  $H=\prod_{i=1}^M \langle
\ell,\ve x_i\rangle$, for a single linear form $\ell$, then
(\ref{eq:multilin-integral-2}) coincides with Formula
(\ref{eq:integral-linpower-big-sum}) in
\autoref{remark-after-th:integral-big-sum}.
\end{remark}
Now any polynomial $f$ which is homogeneous of degree $M$ can be
written as  $f(\ve x)=H_f( \ve x, \ve x,\dots, \ve x)$ for a unique
multilinear form $H_f$. If $f=\ell^M$ then $H_f=\prod_{i=1}^M
\langle \ell,\ve x_i\rangle$. Thus for fixed $M$ the computation of
$H_f$ can be done
 by decomposing $f$ into a linear combination of powers of
linear forms, as we did in the proof of Corollary
\ref{fixMpolytime}.  Alternatively one can use  the well-known
polarization formula,
\begin{equation}
  H_f(\ve x_1,\dots,\ve x_M)=\frac{1}{2_{\mathstrut}^M M!} \sum_{\substack{\ve\epsilon \in{\{\pm
        1\}}^M}} \!\! \epsilon_1\epsilon_2\cdots\epsilon_M
        f\Bigl(\sum_{i=1}^M
  \epsilon_i \ve x_i\Bigr),\label{eq:polarization}
\end{equation}
Thus from \eqref{eq:multilin-integral} we get the following
corollary.
\begin{corollary}\label{cor:integration-via-polarization}
  Let $f$ be a homogeneous polynomial of degree~$M$ in $d$ variables, and let $\ve s_1,\ve
  s_2,\dots,\ve s_{d+1}$ be the vertices of a $d$-dimensional simplex $\Delta$.
  Then
  \begin{multline}
    \label{eq:integration-via-polarization}
      \int_\Delta f(\ve y) \d\ve y =\\
      \frac{\vol(\Delta)}{2_{\mathstrut}^M
        M!\,\binom{M+d}{M}} \sum_{1 \leq i_1 \leq i_2 \leq \cdots \leq i_M \leq d+1}
      \sum_{\substack{\ve\epsilon \in{\{\pm 1\}}^M}} \!\!
      \epsilon_1\epsilon_2\cdots\epsilon_M f\Bigl(\sum_{k=1}^M \epsilon_k \ve
      s_{i_k}\Bigr).
  \end{multline}
\end{corollary}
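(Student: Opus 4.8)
The plan is to chain together the two ingredients already assembled immediately above the statement: the Lasserre--Avrachenkov identity \eqref{eq:multilin-integral} and the classical polarization formula \eqref{eq:polarization}. First I would recall that, since we work over a field of characteristic zero, a homogeneous polynomial $f$ of degree~$M$ determines a \emph{unique} symmetric $M$-linear form $H_f$ on $(\R^d)^M$ with $f(\ve x)=H_f(\ve x,\ve x,\dots,\ve x)$, and that this $H_f$ is exactly the form produced by the right-hand side of \eqref{eq:polarization} — which is precisely why that formula holds. This is the only point where a word of justification is needed, so that the $H_f$ appearing in the two formulas is literally the same object.

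Next I would apply Proposition~\ref{th:lasserre-avrachenkov} with $H=H_f$, obtaining
\[
\int_\Delta f(\ve y)\,\d\ve y = \frac{\vol(\Delta)}{\binom{M+d}{M}} \sum_{1\le i_1\le i_2\le\cdots\le i_M\le d+1} H_f(\ve s_{i_1},\ve s_{i_2},\dots,\ve s_{i_M}).
\]
Then, inside each summand, I would expand $H_f(\ve s_{i_1},\dots,\ve s_{i_M})$ by invoking \eqref{eq:polarization} with the substitution $\ve x_k:=\ve s_{i_k}$:
\[
H_f(\ve s_{i_1},\dots,\ve s_{i_M}) = \frac{1}{2^M M!}\sum_{\ve\epsilon\in\{\pm1\}^M}\epsilon_1\epsilon_2\cdots\epsilon_M\, f\Bigl(\sum_{k=1}^M\epsilon_k\ve s_{i_k}\Bigr).
\]
Substituting this into the previous display and pulling the scalar $\tfrac{1}{2^M M!}$ out front yields \eqref{eq:integration-via-polarization} directly; the nested index set $1\le i_1\le\cdots\le i_M\le d+1$ from \eqref{eq:multilin-integral} is carried through unchanged, with the inner $\{\pm1\}^M$-sum simply inserted termwise.

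Since both \eqref{eq:multilin-integral} and \eqref{eq:polarization} are taken as already established, there is essentially no genuine obstacle: the argument is a two-line substitution plus bookkeeping. No convergence, measurability, or normalization subtleties arise, because $f\bigl(\sum_k\epsilon_k\ve s_{i_k}\bigr)$ is just a polynomial evaluated at finitely many explicit points, and the measure-normalization questions were already handled in Proposition~\ref{th:lasserre-avrachenkov}. If anything, the "hard part" is purely cosmetic — making sure the combinatorial bounds advertised elsewhere (at most $2^M$ effectively distinct linear forms per monomial) are consistent with the $\{\pm1\}^M$-indexing used here.
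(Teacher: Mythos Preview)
Your proposal is correct and follows exactly the approach the paper takes: the corollary is obtained by substituting the polarization formula \eqref{eq:polarization} for $H_f$ into the Lasserre--Avrachenkov identity \eqref{eq:multilin-integral}. The paper in fact does not spell out a proof at all, merely stating ``Thus from \eqref{eq:multilin-integral} we get the following corollary,'' so your write-up is, if anything, more detailed than what appears there.
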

We remark that when we fix the degree~$M$ of the homogeneous
polynomial~$f$, the length of the polarization formula (thus the
length of the second sum in \eqref{eq:integration-via-polarization}
is a constant.  The length of the first sum in
\eqref{eq:integration-via-polarization} is $\Order(n^M)$.  Thus, for
fixed degree in varying dimension, we obtain another polynomial-time
algorithm for integrating over a simplex.




\subsection{Traditional conversion of the integral as iterated univariate integrals}

 Let
$P\subseteq\R^d$ be a full-dimensional polytope and $f$ a
polynomial. The traditional method we teach our calculus students to
compute multivariate integrals over a bounded region requires them
to write the integral $\int_P f \d m$ is a sum of sequences of
one-dimensional integrals
\begin{equation}\label{iteratedonevar}
\sum_{j=1}^K \int_{a_{1j}}^{b_{1j}} \int_{a_{2j}}^{b_{2j}} \cdots
\int_{a_{dj}}^{b_{dj}} f \d x_{i_1}\d x_{i_2} \dots \d x_{i_d}
\end{equation}
for which we know the limits of integration $a_{ij},b_{ij}$
explicitly. The problem of finding the limits of integration and the
sum has interesting complexity related to the well-known
Fourier-Motzkin elimination method (see Chapter One in
\cite{zieglerpolybook} for a short introduction).

Given a system of linear inequalities $A\ve x \leq \ve b$,
describing a polytope $P \subset \R^d$, Fourier--Motzkin elimination
is an algorithm that computes a new larger system of inequalities
$\hat{A}\ve x \leq \ve{\hat{b}}$ with the property that those
inequalities that do not contain the variable $x_d$ describe the
projection of $P$ into the hyperplane $x_d=0$. We will not explain
the details, but Fourier-Motzkin elimination is quite similar to
Gaussian elimination in the sense that the main operations necessary
to eliminate the last variable $x_d$ require to rearrange, scale,
and add rows of the matrix $(A,\ve b)$, but unlike Gaussian
elimination, new inequalities are added to the system.

It was first observed by Schechter \cite{schechter} that
Fourier-Motzkin elimination provides a way to generate the
traditional iterated integrals. More precisely, let us call $P_d$
the projection of $P$ into the the hyperplane $x_d=0$. Clearly when
integrating over a polytopal region we expect that the limits of
integration will be affine functions. From the output of
Fourier-Motzkin $\hat{A}\ve x \leq \ve{\hat{b}}$, we have $\ve x\in
P$ if and only if $(x_1,\dots,x_{d-1}) \in P_d$ and for the first
$k+r$ inequalites of the system
$$ x_d \leq \hat{b}_i-\sum_{j=1}^{d-1} \hat{a}_{ij} x_j=A_i^{\text{u}}(x_1,\dots,x_{d-1})$$ for $i=1,\dots k$
as well as
$$ x_d \geq \hat{b}_{k+i}-\sum_{j=1}^{d-1} \hat{a}_{{k+i}j} x_j,=A_i^{\text{l}}(x_1,\dots,x_{d-1})$$
for $i=1,\dots,r$. Then, if we define
$$m(x_1,\dots, x_d)=\max \{A_j^{\text{l}}(x_1,\dots,x_{d-1}), \  j=1,\dots,r \}$$
and
$$M(x_1,\dots, x_d)=\min\{A_j^{\text{u}}(x_1,\dots,x_{d-1}), \  j=1,\dots,r \},$$
we can write $$\int_P f(\ve x) \d m= \int_{P_d} \int_{m}^M f(\ve x)
\d x_1\d x_2\cdots \d x_d$$ Finally the convex polytope $P_d$ can be
decomposed into polyhedral regions where the functions $m,M$ become
simply affine functions from among the list. Since the integral is
additive we get an expression
$$\int_P f(\ve x) \d m=\sum_{i,j} \int_{P_d^{ij}} \int_{A_j^{\text{l}}}^
{A_j^{\text{u}}} f(\ve x) \d x_1 \d x_2\cdots \d x_d.$$ Finally by
repeating the elimination of variables we recover the full iterated
list in~\eqref{iteratedonevar}. As it was observed in
\cite{schechter}, this algorithm is unfortunately not efficient
because the iterated Fourier-Motzkin elimination procedure can
produce exponentially many inequalities for the description of the
projection (when the dimension~$d$ varies). Thus the number of
summands considered can in fact grow exponentially.

\subsection{Two formulas for integral
computation}\label{other-formulas}

We would like to review two formulas that are nice and could speed
up computation in particular cases although they do not seem to
yield efficient algorithms just on their own.

First, one may reduce the computation of $\int_P f \d m$ to
integrals on the facets of $P$, by applying  Stokes formula. We must
be careful to use a rational Lebesgue measure on each facet.  As
shown in (\cite{integralsbarvinok}), we have the following result.
\begin{theorem}\label{th:stokes}
Let $\{F_i\}_{i=1,\dots,m}$ be the set of facets of a
full-dimensional polytope $P\subseteq\R^n$. For each $i$, let  $\ve
n_i$ be a rational vector which is transverse to the facet $F_i$ and
pointing outwards $P$ and let  $\d \mu_i$ be the Lebesgue measure on
the affine hull of  $F_i$ which is defined by  contracting  the
standard volume form of $\R^n$ with $\ve n_i$. Then
$$I_P(\ve a) = \int_P e^{\langle \ve a,\ve x \rangle} \d\ve x=\frac{1}{\langle \ve a,\ve y \rangle}
\sum_{i=1}^m \langle  \ve n_i,\ve y \rangle \int_{F_i} e^{\langle
\ve a,\ve x
  \rangle} \d\mu_i$$
for all $\ve a\in\C^n$ and $\ve y \in {\R}^n$ such that $\langle \ve
y,\ve a \rangle \not=0$.
\end{theorem}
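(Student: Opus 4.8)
The plan is to read the identity as Stokes's theorem applied to a well-chosen $(n-1)$-form — equivalently, as the divergence theorem applied to a well-chosen vector field. Fix $\ve y\in\R^n$, let $\omega=\d x_1\wedge\cdots\wedge\d x_n$ be the standard volume form, and for $\ve a\in\C^n$ put $\beta_{\ve a}=e^{\langle\ve a,\ve x\rangle}\,\iota_{\ve y}\omega$, where $\iota_{\ve y}$ is contraction with the constant vector field $\ve y$. The first step is the elementary computation $\d\beta_{\ve a}=\langle\ve a,\ve y\rangle\,e^{\langle\ve a,\ve x\rangle}\,\omega$: here $\iota_{\ve y}\omega$ has constant coefficients, so $\d(\iota_{\ve y}\omega)=0$; the vanishing of the $(n+1)$-form $\d x_j\wedge\omega$ forces $\d x_j\wedge\iota_{\ve y}\omega=y_j\,\omega$; and $\d(e^{\langle\ve a,\ve x\rangle})=e^{\langle\ve a,\ve x\rangle}\sum_j a_j\,\d x_j$. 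Combining these by the Leibniz rule gives the claim. (In classical language: the field $e^{\langle\ve a,\ve x\rangle}\,\ve y$ has divergence $\langle\ve a,\ve y\rangle\,e^{\langle\ve a,\ve x\rangle}$.)

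Second, I would apply Stokes's theorem on $P$, which is legitimate since $P$ is compact with piecewise-linear boundary: $\langle\ve a,\ve y\rangle\int_P e^{\langle\ve a,\ve x\rangle}\,\d\ve x=\int_P\d\beta_{\ve a}=\int_{\partial P}\beta_{\ve a}=\sum_{i=1}^m\int_{F_i}\beta_{\ve a}$, with $\partial P$ carrying the boundary orientation. The polytope hypothesis enters exactly here: it splits $\partial P$ into the flat pieces $F_i$, on each of which $\lin(F_i)$ is a genuine hyperplane direction, so that the restriction of $\iota_{\ve y}\omega$ to $\affine(F_i)$ is a scalar multiple of an honest (rational) Lebesgue measure. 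The linear-algebra input is that contracting $\omega$ with a vector tangent to $F_i$ and then restricting to $\affine(F_i)$ gives $0$ — one is feeding $\omega$ the $n$ vectors lying in the $(n-1)$-dimensional space $\lin(F_i)$ — so only the component of $\ve y$ transverse to $F_i$ contributes, and $\iota_{\ve y}\omega\big|_{F_i}=\langle\ve n_i,\ve y\rangle\,\d\mu_i$ once $\d\mu_i$ and the orientation of $F_i$ are fixed through $\ve n_i$ as in the statement. Substituting gives the cleared-denominator identity $\langle\ve a,\ve y\rangle\int_P e^{\langle\ve a,\ve x\rangle}\,\d\ve x=\sum_i\langle\ve n_i,\ve y\rangle\int_{F_i}e^{\langle\ve a,\ve x\rangle}\,\d\mu_i$ for every real $\ve a$, and then the stated formula whenever $\langle\ve a,\ve y\rangle\neq0$.

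Third, to pass from real to complex $\ve a$ I would note that in the cleared-denominator form both sides are finite sums of terms of the shape $p(\ve a)\int_K e^{\langle\ve a,\ve x\rangle}\,\d\nu(\ve x)$ with $p$ a polynomial and $K$ compact, hence entire functions of $\ve a\in\C^n$ (one may differentiate under the integral sign in each $a_j$). Two entire functions on $\C^n$ that agree on $\R^n$ agree everywhere — apply the one-variable identity theorem in each coordinate in turn — so the identity holds on all of $\C^n$; dividing by $\langle\ve a,\ve y\rangle$ wherever it is nonzero completes the proof.

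The step needing genuine care is the facet bookkeeping inside the second paragraph: one must verify that the \emph{signed} restriction $\iota_{\ve y}\omega\big|_{F_i}$ is precisely $\langle\ve n_i,\ve y\rangle\,\d\mu_i$, with the correct sign, so that every summand carries a plus sign. The scalar is a short computation in the one-dimensional quotient $\R^n/\lin(F_i)$ — only the class of $\ve y$ modulo $\lin(F_i)$ matters — and it is here that the precise normalization of $\d\mu_i$ relative to $\ve n_i$ has to be pinned down (for $\ve n_i$ a primitive integer normal, $\d\mu_i$ is the integral Lebesgue measure of Section~\ref{lebesgue}, which is what keeps the output rational). The sign amounts to matching the boundary orientation of $\partial P$ with the ``outward normal first'' orientation on each $F_i$, and this is exactly where the hypothesis that $\ve n_i$ points \emph{outward} is used. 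Everything else is routine.
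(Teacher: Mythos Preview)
The paper does not give its own proof of this theorem; it is quoted from Barvinok's work \cite{integralsbarvinok} and used as a black box. Your approach---apply Stokes's theorem to the $(n-1)$-form $e^{\langle \ve a,\ve x\rangle}\iota_{\ve y}\omega$, equivalently the divergence theorem to the field $e^{\langle \ve a,\ve x\rangle}\ve y$---is the standard argument and is correct, including the passage to complex $\ve a$ by analytic continuation of the cleared-denominator identity. You have also correctly singled out the one step requiring care, the identification of $\iota_{\ve y}\omega\big|_{F_i}$ with $\langle\ve n_i,\ve y\rangle\,\d\mu_i$: as you note, only the class of $\ve y$ in the one-dimensional quotient $\R^n/\lin(F_i)$ matters, and it is precisely here that the normalization of $\d\mu_i$ relative to $\ve n_i$ (and the outward-pointing hypothesis for the sign) must be pinned down so that the scalar comes out as $\langle\ve n_i,\ve y\rangle$.
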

It is clear that, by considering the expansion of the analytic
function $\int_P e^{\langle \ve a, \ve x \rangle} \d \ve x$, we can
again obtain an analogous result for polynomials.  An alternative
proof was provided by \cite{lasserre-integrapoly}. The above
theorem, however, does not necessarily reduce the computational
burden because, depending on the representation of the polytope, the
number of facets can be large and also the facets themselves can be
complicated polytopes.  Yet, together with our results we obtain the
following corollary for two special cases.
\begin{corollary}
  There is a polynomial-time algorithm for the following problem.
  Input:
  \begin{inputlist}
  \item the dimension $n\in\N$ in unary encoding,
  \item a list of rational vectors in binary encoding, namely
    \begin{enumerate}[\rm(i)]
    \item either vectors $(\ve h_1, h_{1,0}), \dots, (\ve h_m,
      h_{m,0})\in\Q^{n+1}$ that describe the facet-defining
      inequalities $\langle\ve h_i, \ve x\rangle \leq h_{i,0}$ of a \emph{simplicial}
      full-dimensional  rational polytope~$P$,
    \item or vectors $\ve s_1, \dots, \ve s_N\in\Q^n$ that are the vertices
      of a \emph{simple} full-dimensional rational polytope~$P$,
    \end{enumerate}
  \item a rational vector~$\ve a\in\Q^n$ in binary encoding,
  \item an exponent $M\in\N$ in unary encoding.
  \end{inputlist}
  Output, in binary encoding,
  \begin{outputlist}
  \item the rational number
    \begin{displaymath}
      \int _P f(\ve x) \d m \quad\text{where}\quad
      f(\ve x) = \langle \ve a, \ve x \rangle^M
    \end{displaymath}
    and where $\d m$ is the standard Lebesgue measure on~$\R^n$.
  \end{outputlist}
\end{corollary}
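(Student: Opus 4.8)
The plan is to reduce, in both representations, the computation of $\int_P\langle\ve a,\ve x\rangle^M\d m$ to a polynomially bounded list of integrals of a power of a \emph{single} linear form over a \emph{simplex}, each of which is settled by \autoref{powersoflinear} (with $D=1$), or explicitly by the partial-fraction/residue formulas of Corollaries~\ref{th:brion} and~\ref{th:residue}. Some restriction on $P$ is unavoidable: already the case $M=0$ asks for $\vol(P)$, which is $\#\mathrm{P}$-hard for general polytopes in either the $V$- or the $H$-representation \cite{dyerfrieze88,lawrence91}; the point of the hypotheses ``simplicial'' (for representation~(i)) and ``simple'' (for representation~(ii)) is precisely that under them the natural decomposition — a cone triangulation from a vertex, respectively Brion's decomposition into tangent cones — consists of only polynomially many pieces. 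I would also record the underlying counting fact: if every facet of a full-dimensional $P\subseteq\R^n$ with $m$ facets is a simplex then, comparing the two ways of counting vertex--facet incidences ($nm$ from the facet side, at least $n$ per vertex from the vertex side), $P$ has at most $m$ vertices; dually a simple $n$-polytope with $N$ vertices has at most $N$ facets.

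\emph{Representation (i): $P$ simplicial, given by facets.} First I would find one vertex $\ve v_0$ of $P$ by maximizing a generic rational linear functional over $P$ by linear programming. Then I would use the cone triangulation of $P$ obtained by coning $\ve v_0$ over those facets $F$ with $\ve v_0\notin F$ (detected by $\langle\ve h_i,\ve v_0\rangle\neq h_{i,0}$): since $P$ is simplicial each such $F$ is an $(n-1)$-simplex, so $\conv(\{\ve v_0\}\cup F)$ is an $n$-simplex, and the (at most $m$) simplices so obtained form a triangulation of $P$. For a facet $F$ with supporting hyperplane $H_i=\{\langle\ve h_i,\ve x\rangle=h_{i,0}\}$, I would recover its $n$ vertices without touching the possibly exponentially many vertices of $P$: among the inequalities $\langle\ve h_j,\ve x\rangle\le h_{j,0}$, $j\neq i$, restricted to $H_i$, exactly $n$ are irredundant (as $F$ is a simplex), and these are singled out by one linear program each; each vertex of $F$ is then the unique solution of the $n\times n$ system formed by $\langle\ve h_i,\ve x\rangle=h_{i,0}$ and all but one of the $n$ irredundant equalities. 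With the $n+1$ vertices of each $\conv(\{\ve v_0\}\cup F)$ in hand I would invoke \autoref{powersoflinear} (here $\Delta$ is full-dimensional, so $\d m$ is the standard measure and $\vol(\Delta)=\frac1{n!}\abs{\det(\cdot)}$) to compute $\int_{\conv(\{\ve v_0\}\cup F)}\langle\ve a,\ve x\rangle^M\d m$, and add up over the facets; the case $\ve a=\ve 0$ is handled by the same triangulation. (Alternatively, one application of Stokes's formula, \autoref{th:stokes}, with $\ve n_i=\ve h_i$ reduces $\int_P$ directly to the facet integrals $\int_{F_i}\langle\ve a,\ve x\rangle^{M+1}\d\mu_i$, at the cost of bookkeeping the contraction measures $\d\mu_i$ against the integral Lebesgue measures of $\langle F_i\rangle$.) Every step is polynomial in $n$, $m$, $M$ and the bit sizes of the $\ve h_i$, $h_{i,0}$, $\ve a$.

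\emph{Representation (ii): $P$ simple, given by vertices.} Here I would lean on Brion's theorem for polyhedra~\cite{Brion88}: the sum over the vertices $\ve v$ of $P$ of the meromorphic functions $(-1)^n\abs{\det(\ve u^{\ve v}_1,\dots,\ve u^{\ve v}_n)}\,e^{\langle\ve a,\ve v\rangle}/\prod_{j}\langle\ve a,\ve u^{\ve v}_j\rangle$ — where $\ve u^{\ve v}_1,\dots,\ve u^{\ve v}_n$ are the edge directions at $\ve v$, which are $n$ in number and linearly independent because $P$ is simple — has all poles cancelling and equals the entire function $\int_P e^{\langle\ve a,\ve x\rangle}\d\ve x$ (each summand being the integral of $e^{\langle\ve a,\cdot\rangle}$ over the simplicial tangent cone at $\ve v$, computed by the same one-line change of variables as in the proof of Corollary~\ref{th:brion}). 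Extracting the coefficient of $t^M$ after substituting $\ve a\mapsto t\ve a$ — the denominator contributes a factor $t^{-n}$ — gives, for regular $\ve a$,
\[
\int_P\langle\ve a,\ve x\rangle^M\d m=\frac{M!}{(M+n)!}\sum_{\ve v}\frac{(-1)^n\abs{\det(\ve u^{\ve v}_1,\dots,\ve u^{\ve v}_n)}\,\langle\ve a,\ve v\rangle^{M+n}}{\prod_{j=1}^{n}\langle\ve a,\ve u^{\ve v}_j\rangle}.
\]
The $\ve u^{\ve v}_j$ enter only up to positive scalars, so I would take them to be $\ve w-\ve v$ with $\ve w$ ranging over the neighbours of $\ve v$; two input vertices $\ve v,\ve w$ are neighbours exactly when some linear functional attains its maximum over the vertex set precisely on $\{\ve v,\ve w\}$, a linear-programming feasibility question I would decide for each of the $\Order(N^2)$ pairs (and each vertex of a simple polytope then has exactly $n$ neighbours). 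If $\ve a$ fails to be regular (including $\ve a=\ve 0$), I would replace $\ve a$ by $\ve a+\epsilon\ve c$ for generic rational $\ve c$ and read off the constant term in $\epsilon$: the displayed right-hand side is then a rational function of $\epsilon$, regular at $0$, whose value at $0$ is the sought integral, and which is computed by Laurent-expanding the $N$ summands and summing, the principal parts cancelling — exactly the mechanism of Corollary~\ref{th:residue} and Remark~\ref{method_with_Taylor}. Again every step is polynomial in $n$, $N$, $M$ and the input bit sizes.

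The analytic content is essentially done for us: once the pieces are in place it is a routine Taylor/Laurent expansion layered on top of \autoref{powersoflinear} and Corollaries~\ref{th:brion}--\ref{th:residue}, so I expect the main obstacle to be the \emph{combinatorial extraction} together with its complexity bookkeeping — in representation~(i), producing the $n$ vertices of each facet by linear programming rather than enumerating the (possibly exponentially many) vertices of $P$, using crucially that each facet is a simplex; and in representation~(ii), recovering the edge directions at each vertex from the bare vertex list and organizing the generic perturbation that removes non-regularity while keeping all arithmetic of polynomial size.
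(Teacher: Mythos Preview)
Your proposal is correct. For case~(ii) it follows the paper's route essentially verbatim --- Brion's formula for simple polytopes, Equation~\eqref{Brion-simple-polytope}, together with a perturbation $\ell\mapsto\ell+\epsilon\ell'$ when $\ve a$ is not regular --- and you are in fact more explicit than the paper about recovering the edge graph from the bare vertex list via linear programming. For case~(i) your primary argument differs from the paper's: the paper applies Stokes's formula (\autoref{th:stokes}) to reduce $\int_P\langle\ve a,\ve x\rangle^M\d m$ directly to the facet integrals $\int_{F_i}\langle\ve a,\ve x\rangle^{M+1}\d\mu_i$ and then invokes \autoref{powersoflinear} on each $(n-1)$-simplex~$F_i$, whereas you triangulate~$P$ into full-dimensional simplices by coning a vertex~$\ve v_0$ over the facets not containing~$\ve v_0$ and integrate over those. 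Both are valid and of the same complexity order (at most~$m$ pieces, each requiring the $n$ vertices of one facet, found by linear programming in either approach). Your triangulation has the mild advantage of working in the standard Lebesgue measure throughout and of handling $\ve a=\ve 0$ uniformly, while the paper's Stokes reduction avoids singling out a distinguished vertex but requires $\langle\ve y,\ve a\rangle\neq 0$ and the bookkeeping of the contraction measures~$\d\mu_i$ against the integral Lebesgue measures on the facets. Since you also record the Stokes alternative, the two write-ups are close in spirit.
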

\begin{proof}
  In the case (i) of simplicial polytopes~$P$ given by facet-defining
  inequalities, we can use linear programming to compute in polynomial time a
  $V$-representation for each simplex $F_i$ that is a facet of~$P$.
  By applying  \autoref{th:stokes}
  with $ t\ve a$ in place of $\ve a$ and extracting the coefficient of $t^M$ in the Taylor
  expansion of the analytic function $t\mapsto I_P(t\ve a)$,
  we obtain the formula
  $$\int_P \langle \ve a,\ve x \rangle^M \d\ve x
  =\frac{1}{(M+1)\langle \ve y, \ve a \rangle}
  \sum_{i=1}^m \langle \ve y, \ve n_i \rangle \int_{F_i} \langle\ve a,\ve
  x\rangle^{M +1} \d\mu_i,$$
  which holds for all~$\ve y\in\R^n$ with $\langle\ve y,\ve a\rangle\neq0$.
  It is known that a suitable~$\ve y\in\Q^n$ can be constructed in polynomial time.
  The integrals on the right-hand side can now be evaluated in polynomial time
  using \autoref{powersoflinear}.

  In the case (ii) of simple polytopes~$P$ given by their vertices, we make
  use of the fact that a variant of Brion's formula (\ref{brion}) actually holds for arbitrary
  rational polytopes. For  a
  simple polytope $P$, it takes the following form.
  \begin{equation}\label{Brion-simple-polytope}
\int_P \ell^M \d x =\frac{M!}{(M+n)!} \sum_{i=1}^N \Delta_i
  \frac{ \langle \ell ,\ve s_i
    \rangle^{M+n}}{\prod_{\ve s_j \in N(\ve s_i)} \langle \ell,\ve s_i-\ve
    s_j \rangle},
    \end{equation}
  where $N(\ve s_i)$ denotes the set of vertices adjacent to~$\ve s_i$
  in~$P$, and $\Delta_i= \mathopen|\det(\ve s_i-\ve
    s_j)_{j\in N(\ve s_i)}\mathclose|$.
The right-hand side is a sum of rational functions of $\ell$, where
the denominators cancel out so that the sum is actually polynomial.
If $\ell$ is regular, that is to say $ \langle \ell,\ve s_i-\ve
    s_j \rangle \neq 0$ for any $i$ and $j\in N(\ve s_i)$, then the
    integral can be computed by (\ref{Brion-simple-polytope}) which is a very short formula.
However it becomes difficult to extend the method which we used in
the case of a simplex. Instead, we can do a perturbation. In
(\ref{Brion-simple-polytope}), we replace $\ell$ by $\ell + \epsilon
\ell'$, where $ \ell'$ is such that $\ell + \epsilon \ell'$ is
regular for $\epsilon \neq 0$. The algorithm for choosing $\ell'$ is
bounded polynomially. Then we do expansions in powers of $\epsilon $
as explained in Lemma \ref{truncated-product}.
 \end{proof}

\subsection{Triangulation of arbitrary polytopes}
It is well-known that any convex polytope can be triangulated into
finitely many simplices.  Thus we can use our result
to extend the integration of polynomials over any convex polytope.
The complexity of doing it this way will directly depend
on the number of simplices in a triangulation. This raises the issue
of finding the smallest triangulation possible of a given
polytope. Unfortunately this problem was proved to be NP-hard even for
fixed dimension three (see  \cite{triangbook}).
Thus it is in general not a good idea to spend time finding the smallest
triangulation possible. A cautionary remark is that one can
naively assume that triangulations help for non-convex polyhedral
regions, while in reality it does not because there exist nonconvex polyhedra
that are not triangulable unless one adds new points. Deciding how many
new points are necessary is an NP-hard problem \cite{triangbook}.


\section{Implementation and  computational experiments}
\label{s:computations}

We have written Maple programs to perform some initial experiments
with the three methods described in Section \ref{s:fixed_degree}.
The programs are available at
\cite{deloeraetal:integration-paper:accompanying-programs}.\footnote{All
  algorithms are implemented in the files \texttt{waring.mpl}, \texttt{iterated\_laurent.mpl}, \texttt{duality.mpl}; all tables with random examples
  are created using procedures in \texttt{examples.mpl} and
  \texttt{tables.mpl}.}

\subsection{Integration of a power of a linear form and decomposition of polynomials into powers of linear forms }

\subsubsection{Decomposition of polynomials into powers of linear
forms}
\autoref{tab:decomposition-powerlinform} shows the number $F(n,M)$
of primitive linear forms $(p_1,\dots, p_n)$ which may appear in the
decomposition ~\eqref{eq:decomp-powerlinform} of a polynomial of
total degree $\leq M$. This number is computed using the closed
formula \eqref{eq:number_of_primitive}.

\begin{table}[ht]
  \caption{Decomposition of polynomials into powers of linear forms}
  \label{tab:decomposition-powerlinform}
  \small
  \begin{center}
$\begin{array}{r*{8}r}
\toprule
 & \multicolumn{8}{c}{\text{Degree~$M$}}\\
\cmidrule{2-9}
n & 1 & 2 & 5 & 10 & 20 & 30 & 40 & 50\\
\midrule
2 & 2
 & 3
 & 11
 & 33
 & 129
 & 279
 & 491
 & 775
\\
3 & 3
 & 6
 & 40
 & 205
 & 1381
 & 4306
 & 9880
 & 18970
\\
4 & 4
 & 10
 & 103
 & 831
 & 9373
 & 41373
 & 122349
 & 286893
\\
5 & 5
 & 15
 & 221
 & 2681
 & 49586
 & 305836
 & 1.2\cdot10^{6}
 & 3.3\cdot10^{6}
\\
8 & 8
 & 36
 & 1226
 & 42271
 & 3.1\cdot10^{6}
 & 4.8\cdot10^{7}
 & 3.7\cdot10^{8}
 & 1.9\cdot10^{9}
\\
10 & 10
 & 55
 & 2917
 & 181413
 & 3.0\cdot10^{7}
 & 8.4\cdot10^{8}
 & 1.0\cdot10^{10}
 & 7.5\cdot10^{10}
\\
15 & 15
 & 120
 & 15338
 & 3.3\cdot10^{6}
 & 3.2\cdot10^{9}
 & 3.4\cdot10^{11}
 & 1.2\cdot10^{13}
 & 2.1\cdot10^{14}
\\
20 & 20
 & 210
 & 52859
 & 3.0\cdot10^{7}
 & 1.4\cdot10^{11}
 & 4.7\cdot10^{13}
 & 4.2\cdot10^{15}
 & 1.6\cdot10^{17}
\\
30 & 30
 & 465
 & 324076
 & 8.5\cdot10^{8}
 & 4.7\cdot10^{13}
 & 1.2\cdot10^{17}
 & 5.5\cdot10^{19}
 & 8.9\cdot10^{21}
\\
40 & 40
 & 820
 & 1.2\cdot10^{6}
 & 1.0\cdot10^{10}
 & 4.2\cdot10^{15}
 & 5.5\cdot10^{19}
 & 1.1\cdot10^{23}
 & 6.0\cdot10^{25}
\\
50 & 50
 & 1275
 & 3.5\cdot10^{6}
 & 7.5\cdot10^{10}
 & 1.6\cdot10^{17}
 & 8.9\cdot10^{21}
 & 6.0\cdot10^{25}
 & 1.0\cdot10^{29}
\\
\bottomrule
\end{array}$
  \end{center}
\end{table}

\newcommand\nd{\hphantom{.0}}

\clearpage
\subsubsection{Integration of a power of a linear form over a simplex}

We have written a Maple program which implements the method of
Corollary \ref{th:residue} for the efficient integration of a power
of one linear form over a simplex, $\int_\Delta \ell^M \d
m$.\footnote{The
  integration is done by the Maple procedure
  \textit{integral\_power\_linear\_form} in \texttt{waring.mpl}.}
In a computational experiment,
for a given dimension~$n$ and degree~$M$, we picked random full-dimensional
simplices~$\Delta$ and random linear forms~$\ell$ and used the Maple program to compute the
integral.  \autoref{tab:integration-of-powerlinform} shows the computation
times.\footnote{All experiments were done with Maple~12 on Sun Fire V440 machines
  with UltraSPARC-IIIi processors running at 1.6\,GHz.  The computation times
  are given in CPU seconds. All experiments were subject to a time limit of
  600 seconds per example.}

\begin{table}[ht]
  \caption{Integration of powers of linear forms over simplices}
  \label{tab:integration-of-powerlinform}
  \begin{center}
  \small

\begin{tabular}{r*{7}r}
\toprule
 & \multicolumn{7}{c}{Degree~$M$}\\
\cmidrule(l){2-8}
$n$ & 2 & 10 & 20 & 50 & 100 & 300 & 1000\\
\midrule
10 &  0.0 &  0.0 &  0.0 &  0.1 &  0.0 &  0.0 &  0.0\\
20 &  0.1 &  0.2 &  0.1 &  0.2 &  0.1 &  0.2 &  0.2\\
50 &     1.0 &   1.4 &   1.4 &   1.6 &   1.6 &   1.6 &   1.7\\
100 &   5.1 &   8.4 &   8.7 &   9.2 &   9.5 &    10.0 &    11.0\\
200 &    36\nd &    71\nd &    84\nd &    88\nd &    97\nd & 110\nd & 120\nd\\
300 & 150\nd & 320\nd & 400\nd & 470\nd & 520\nd & 530\nd & 
\\
400 & 500\nd & 
\\
1000 & 
\\
\bottomrule
\end{tabular}
\end{center}
\end{table}

\subsubsection{Integration of a  monomial over a simplex by decomposition as sum of  powers of linear forms}

Next, we tested the  algorithm which  computes the integral of a
monomial $\ve x^{\ve M}$ over a simplex~$\Delta$, by decomposing it
as a sum of  powers of linear forms. This algorithm was discussed in
\autoref{s:fixed_degree}. In our experiments, for given dimension
$n$ and total degree $M$, we picked 50 combinations of a random
simplex~$\Delta$ of dimension $n$ and a random  exponent vector~$\ve
M=(M_1,\ldots,M_n)$ with $ \sum_{i=1}^n M_i= M$.

First we decompose a given monomial into a sum of powers of linear
forms, then we integrate each summand using the Maple procedure
discussed above. \footnote{This method is implemented in the Maple
procedure
  \textit{integral\_via\_waring} in \texttt{waring.mpl}.}
\autoref{tab:integration-by-decomposition-powerlinform} shows the minimum,
average, and maximum computation
times.

\begin{table}[t]
  \caption{Integration of a random monomial of prescribed degree by
    decomposition into a sum of powers of linear forms}
  \label{tab:integration-by-decomposition-powerlinform}
  \tiny
  \begin{center}
\begin{tabular}{r*{11}r@{}r}
\toprule
 & \multicolumn{12}{c}{Degree}\\
\cmidrule{2-13}
$n$ & 1 & 2 & 5 & 10 & 20 & 30 & 40 & 50 & 100 & 200 & 300 & \\
\midrule
2 & \minavgmax{      0}{      0}{      0}
 & \minavgmax{      0}{      0}{    0.1}
 & \minavgmax{      0}{      0}{    0.1}
 & \minavgmax{      0}{    0.1}{    0.1}
 & \minavgmax{      0}{    0.2}{    0.4}
 & \minavgmax{      0}{    0.5}{    0.8}
 & \minavgmax{    0.1}{    1.0}{    1.9}
 & \minavgmax{    0.3}{    1.5}{    2.6}
 & \minavgmax{    0.8}{    8.3}{     12}
 & \minavgmax{    6.2}{     38}{     59}
 & \minavgmax{    0.3}{     90}{    174}
 & 
\\[7ex]
3 & \minavgmax{      0}{      0}{      0}
 & \minavgmax{      0}{      0}{    0.2}
 & \minavgmax{      0}{      0}{    0.1}
 & \minavgmax{      0}{    0.2}{    0.4}
 & \minavgmax{      0}{    1.1}{    2.2}
 & \minavgmax{    0.4}{    3.0}{    7.0}
 & \minavgmax{    0.5}{    7.4}{     19}
 & \minavgmax{    0.9}{     17}{     37}
 & \minavgmax{    4.6}{    173}{    440}
 & 
\\[7ex]
4 & \minavgmax{      0}{      0}{      0}
 & \minavgmax{      0}{      0}{    0.2}
 & \minavgmax{      0}{    0.1}{    0.2}
 & \minavgmax{    0.1}{    0.4}{    0.9}
 & \minavgmax{    0.6}{    3.6}{    8.7}
 & \minavgmax{    1.5}{     15}{     44}
 & \minavgmax{    4.8}{     52}{    149}
 & \minavgmax{    9.7}{    135}{    404}
 & 
\\[7ex]
5 & \minavgmax{      0}{      0}{      0}
 & \minavgmax{      0}{      0}{    0.2}
 & \minavgmax{      0}{    0.1}{    0.3}
 & \minavgmax{    0.1}{    0.7}{    1.9}
 & \minavgmax{    0.3}{    8.8}{     27}
 & \minavgmax{    4.5}{     48}{    195}
 & 
\\[7ex]
6 & \minavgmax{      0}{      0}{    0.2}
 & \minavgmax{      0}{      0}{    0.1}
 & \minavgmax{      0}{    0.2}{    0.4}
 & \minavgmax{    0.2}{    1.3}{    2.7}
 & \minavgmax{    1.3}{     24}{     74}
 & \minavgmax{    8.0}{    144}{    544}
 & 
\\[7ex]
7 & \minavgmax{      0}{      0}{    0.2}
 & \minavgmax{      0}{      0}{    0.1}
 & \minavgmax{      0}{    0.3}{    0.6}
 & \minavgmax{    0.5}{    2.1}{    5.0}
 & \minavgmax{    5.9}{     53}{    152}
 & 
\\[7ex]
8 & \minavgmax{      0}{      0}{    0.2}
 & \minavgmax{      0}{      0}{    0.2}
 & \minavgmax{    0.1}{    0.3}{    0.6}
 & \minavgmax{    0.4}{    3.2}{    8.5}
 & \minavgmax{     11}{     72}{    216}
 & 
\\[7ex]
10 & \minavgmax{      0}{      0}{    0.3}
 & \minavgmax{      0}{    0.1}{    0.2}
 & \minavgmax{    0.2}{    0.4}{    0.8}
 & \minavgmax{    1.5}{    6.1}{     12}
 & 
\\[7ex]
15 & \minavgmax{      0}{    0.1}{    0.2}
 & \minavgmax{      0}{    0.1}{    0.3}
 & \minavgmax{    0.3}{    1.2}{    1.8}
 & \minavgmax{    3.8}{     17}{     41}
 & 
\\[7ex]
20 & \minavgmax{    0.1}{    0.1}{    0.2}
 & \minavgmax{    0.1}{    0.3}{    0.4}
 & \minavgmax{    0.6}{    2.2}{    2.9}
 & \minavgmax{    4.4}{     41}{     73}
 & 
\\[7ex]
30 & \minavgmax{    0.1}{    0.2}{    0.3}
 & \minavgmax{    0.2}{    0.5}{    0.6}
 & \minavgmax{    2.7}{    5.1}{    6.8}
 & \minavgmax{     37}{    106}{    170}
 & 
\\[7ex]
40 & \minavgmax{    0.3}{    0.4}{    0.6}
 & \minavgmax{    0.3}{    1.1}{    1.3}
 & \minavgmax{    5.2}{     10}{     12}
 & \minavgmax{     93}{    242}{    414}
 & 
\\[7ex]
50 & \minavgmax{    0.5}{    0.6}{    0.8}
 & \minavgmax{    0.7}{    1.8}{    2.0}
 & \minavgmax{    8.2}{     17}{     20}
 & 
\\
\bottomrule
\end{tabular}
  \end{center}
\end{table}

\clearpage
\subsection{Integration of a monomial, using iterated
Laurent series.} In this section, we  test   the implementation of
the method of iterated Laurent expansion described in Remark
\ref{method_with_Taylor} of \autoref{s:fixed_degree}.\footnote{This
method is implemented in the
  Maple procedure \textit{integral\_via\_iterated}, defined in the file
  \texttt{iterated\_laurent.mpl}.}

Table~\ref{tab:integration-via-iterated} shows the results.

\begin{table}[ht]
  \caption{Integration of a random monomial of prescribed degree using
    iterated Laurent series.}
  \label{tab:integration-via-iterated}
  \tiny
  \begin{center}
\begin{tabular}{r*{11}r@{}r}
\toprule
 & \multicolumn{12}{c}{Degree}\\
\cmidrule{2-13}
$n$ & 1 & 2 & 5 & 10 & 20 & 30 & 40 & 50 & 100 & 200 & 300 & \\
\midrule 2 & \minavgmax{      0}{      0}{      0}
 & \minavgmax{      0}{      0}{    0.1}
 & \minavgmax{      0}{      0}{    0.1}
 & \minavgmax{      0}{      0}{    0.1}
 & \minavgmax{      0}{      0}{    0.1}
 & \minavgmax{      0}{      0}{    0.1}
 & \minavgmax{      0}{    0.1}{    0.2}
 & \minavgmax{      0}{    0.1}{    0.3}
 & \minavgmax{      0}{    0.5}{    1.2}
 & \minavgmax{    0.3}{    3.0}{    7.1}
 & \minavgmax{    0.2}{    8.1}{     30}
 & 
\\[7ex]
3 & \minavgmax{      0}{      0}{      0}
 & \minavgmax{      0}{      0}{      0}
 & \minavgmax{      0}{      0}{    0.1}
 & \minavgmax{      0}{    0.1}{    0.2}
 & \minavgmax{      0}{    0.4}{    6.2}
 & \minavgmax{      0}{    0.5}{    3.7}
 & \minavgmax{    0.1}{    1.2}{    7.1}
 & \minavgmax{    0.1}{    3.4}{     11}
 & \minavgmax{    0.2}{     34}{    164}
 & 
\\[7ex]
4 & \minavgmax{      0}{      0}{      0}
 & \minavgmax{      0}{      0}{    0.1}
 & \minavgmax{      0}{    0.1}{    0.2}
 & \minavgmax{      0}{    0.3}{    0.7}
 & \minavgmax{    0.1}{    1.4}{     12}
 & \minavgmax{    0.3}{    4.8}{     35}
 & \minavgmax{    0.4}{     16}{     77}
 & \minavgmax{    1.0}{     39}{    176}
 & 
\\[7ex]
5 & \minavgmax{      0}{    0.1}{    0.1}
 & \minavgmax{      0}{    0.1}{    0.1}
 & \minavgmax{    0.1}{    0.2}{    0.4}
 & \minavgmax{    0.1}{    0.7}{     12}
 & \minavgmax{    0.1}{    4.5}{     35}
 & \minavgmax{    1.4}{     36}{    551}
 & \minavgmax{    0.2}{     78}{    353}
 & 
\\[7ex]
6 & \minavgmax{    0.1}{    0.1}{    0.1}
 & \minavgmax{      0}{    0.2}{    5.0}
 & \minavgmax{    0.1}{    0.3}{    0.6}
 & \minavgmax{    0.2}{    1.6}{    5.9}
 & \minavgmax{    0.4}{     24}{    205}
 & 
\\[7ex]
7 & \minavgmax{    0.1}{    0.1}{    0.2}
 & \minavgmax{    0.1}{    0.3}{    5.4}
 & \minavgmax{    0.2}{    0.7}{    4.6}
 & \minavgmax{    0.4}{    4.1}{     22}
 & 
\\[7ex]
8 & \minavgmax{    0.2}{    0.2}{    0.2}
 & \minavgmax{    0.2}{    0.3}{    4.8}
 & \minavgmax{    0.2}{    1.0}{    4.1}
 & \minavgmax{    0.3}{     11}{    111}
 & 
\\[7ex]
10 & \minavgmax{    0.3}{    0.5}{    7.0}
 & \minavgmax{    0.4}{    0.5}{    3.5}
 & \minavgmax{    0.4}{    2.8}{     12}
 & 
\\[7ex]
15 & \minavgmax{    1.3}{    1.7}{    5.8}
 & \minavgmax{    1.4}{    1.9}{    5.1}
 & \minavgmax{    3.5}{     25}{     73}
 & 
\\[7ex]
20 & \minavgmax{    3.8}{    5.0}{    9.4}
 & \minavgmax{    4.0}{    5.3}{    8.1}
 & \minavgmax{    4.7}{    123}{    352}
 & 
\\[7ex]
30 & \minavgmax{     25}{     29}{     41}
 & \minavgmax{     26}{     29}{     32}
 & 
\\[7ex]
40 & \minavgmax{     88}{     98}{    106}
 & \minavgmax{     90}{    101}{    152}
 & 
\\[7ex]
50 & \minavgmax{    248}{    271}{    300}
 & \minavgmax{    259}{    283}{    429}
 & 
\\
\bottomrule
\end{tabular}
  \end{center}
\end{table}

\clearpage

\subsection{Integration of a monomial, using  Taylor expansion.}

Here,  we test the implementation  of   the algorithm described in
\autoref{s:fixed_degree}. This algorithm is based on
\autoref{th:expansion} .\footnote{This method is
  implemented in the Maple procedure
  \emph{integral\_via\_duality}, defined in the file \texttt{duality.mpl}.}

The running times are shown in
\autoref{tab:integration-via-duality}.

\begin{table}[ht]
  \caption{Integration of a random monomial of prescribed degree  using Taylor expansion.}
  \label{tab:integration-via-duality}
  \tiny
  \begin{center}
\begin{tabular}{r*{11}r@{}r}
\toprule
 & \multicolumn{12}{c}{Degree}\\
\cmidrule{2-13}
$n$ & 1 & 2 & 5 & 10 & 20 & 30 & 40 & 50 & 100 & 200 & 300 & \\
\midrule
2 & \minavgmax{      0}{      0}{      0}
 & \minavgmax{      0}{      0}{      0}
 & \minavgmax{      0}{      0}{      0}
 & \minavgmax{      0}{      0}{    0.1}
 & \minavgmax{      0}{      0}{    0.1}
 & \minavgmax{      0}{    0.1}{    0.3}
 & \minavgmax{      0}{    0.2}{    0.6}
 & \minavgmax{      0}{    0.3}{    0.9}
 & \minavgmax{      0}{    2.2}{     13}
 & \minavgmax{    0.9}{     24}{    101}
 & \minavgmax{    1.9}{     68}{    426}



\\[7ex]
3 & \minavgmax{      0}{      0}{      0}
 & \minavgmax{      0}{      0}{      0}
 & \minavgmax{      0}{      0}{    0.1}
 & \minavgmax{      0}{    0.1}{    0.4}
 & \minavgmax{      0}{    2.0}{     12}
 & \minavgmax{      0}{    8.1}{     60}
 & \minavgmax{    0.1}{     39}{    277}
 & \minavgmax{      0}{     61}{    512}
 & 
\\[7ex]
4 & \minavgmax{      0}{      0}{      0}
 & \minavgmax{      0}{      0}{      0}
 & \minavgmax{      0}{    0.1}{    0.4}
 & \minavgmax{      0}{    1.5}{     34}
 & 
\\[7ex]
5 & \minavgmax{      0}{      0}{      0}
 & \minavgmax{      0}{      0}{      0}
 & \minavgmax{      0}{    0.1}{    0.6}
 & \minavgmax{    0.1}{    4.9}{     48}
 & 
\\[7ex]
6 & \minavgmax{      0}{      0}{      0}
 & \minavgmax{      0}{      0}{      0}
 & \minavgmax{      0}{    0.4}{    1.7}
 & \minavgmax{    0.1}{     29}{    236}
 & 
\\[7ex]
7 & \minavgmax{      0}{      0}{      0}
 & \minavgmax{      0}{      0}{      0}
 & \minavgmax{      0}{    0.5}{    1.7}
 & 
\\[7ex]
8 & \minavgmax{      0}{      0}{      0}
 & \minavgmax{      0}{      0}{      0}
 & \minavgmax{      0}{    1.1}{     16}
 & 
\\[7ex]
10 & \minavgmax{      0}{      0}{      0}
 & \minavgmax{      0}{      0}{    0.1}
 & \minavgmax{      0}{    3.0}{     33}
 & 
\\[7ex]
15 & \minavgmax{      0}{      0}{      0}
 & \minavgmax{      0}{    0.1}{    0.1}
 & \minavgmax{    0.1}{     20}{     64}
 & 
\\[7ex]
20 & \minavgmax{      0}{      0}{      0}
 & \minavgmax{      0}{    0.7}{     28}
 & \minavgmax{    1.2}{     81}{    205}
 & 
\\[7ex]
30 & \minavgmax{      0}{      0}{    0.1}
 & \minavgmax{      0}{    0.7}{     15}
 & 
\\[7ex]
40 & \minavgmax{      0}{    0.1}{    0.1}
 & \minavgmax{      0}{    1.2}{     23}
 & 
\\[7ex]
50 & \minavgmax{    0.1}{    0.1}{    0.2}
 & \minavgmax{    0.1}{    1.7}{     18}
 & 
\\
\bottomrule
\end{tabular}
  \end{center}
\end{table}

\clearpage
\subsection{Integration of a dense homogeneous polynomial over a simplex}

Following the tests on single monomials, we ran tests on random
polynomials of varying density.  We generated these polynomials
using the Maple function~$\texttt{randpoly}$, requesting a
number~$r$ of monomials and the homogeneous degree~$M$. For each
monomial, the exponent vector was drawn uniformly from $\{\, \ve
M\in\N^d : |\ve M|=M\,\}$, and the coefficient is drawn uniformly
from $\{1,\dots,100\}.$ Due to collisions, the generated polynomial
actually can have fewer monomials than~$r$.\footnote{This is
implemented in
  the Maple procedure
  \texttt{random\_sparse\_\allowbreak{}homogeneous\_\allowbreak{}polynomial\_with\_degree}
  in the file \texttt{examples.mpl}.}

We only include the results for a family of randomly generated, very
dense homogeneous polynomials, where we draw $r =
\binom{M+d-1}{d-1}$ random monomials.
Table~\ref{tab:number-of-monomials-of-dense-poly} shows the number
of monomials in the resulting polynomials for our random tests.
Tables~\ref{tab:integration-dense-poly-by-decomposition-powerlinform},
\ref{tab:integration-dense-poly-via-duality},
\ref{tab:integration-dense-poly-via-iterated} show the test results
of the three methods.

We remark that in the case of the method using decompositions into powers of linear forms, we
note that the same powers of linear forms appear in the decomposition
formulas~\eqref{eq:decomp-powerlinform} for many different monomials $\ve
x^{\ve M_1}$, $\ve x^{\ve M_2}$.
We take advantage of this fact by collecting the coefficients of powers of
linear forms.\footnote{This is implemented in the procedure \texttt{list\_integral\_via\_waring}.}

\begin{table}[t]
  \caption{Number of monomials in a test family of random dense homogeneous
    polynomials of prescribed degree}
  \label{tab:number-of-monomials-of-dense-poly}
  \tiny
  \begin{center}
    \hspace*{-8em}
\begin{tabular}{r*{12}r}
\toprule
 & \multicolumn{12}{c}{Degree}\\
\cmidrule{2-13}
$n$ & 1 & 2 & 5 & 10 & 20 & 30 & 40 & 50 & 100 & 200 & 300 & 1000\\
\midrule
1 & \minavgmax{    1}{    1}{    1}
 & \minavgmax{    1}{    1}{    1}
 & \minavgmax{    1}{    1}{    1}
 & \minavgmax{    1}{    1}{    1}
 & \minavgmax{    1}{    1}{    1}
 & \minavgmax{    1}{    1}{    1}
 & \minavgmax{    1}{    1}{    1}
 & \minavgmax{    1}{    1}{    1}
 & \minavgmax{    1}{    1}{    1}
 & \minavgmax{    1}{    1}{    1}
 & \minavgmax{    1}{    1}{    1}
 & \minavgmax{    1}{    1}{    1}
\\[7ex]
2 & \minavgmax{    2}{    2}{    2}
 & \minavgmax{    1}{    2.2}{    3}
 & \minavgmax{    2}{    4.2}{    6}
 & \minavgmax{    4}{    7.1}{    9}
 & \minavgmax{     11}{     14}{     18}
 & \minavgmax{     16}{     20}{     25}
 & \minavgmax{     22}{     26}{     33}
 & \minavgmax{     28}{     33}{     37}
 & \minavgmax{     59}{     64}{     70}
 & \minavgmax{    120}{    128}{    140}
 & \minavgmax{    179}{    189}{    198}
 & \minavgmax{    606}{    634}{    659}
\\[7ex]
3 & \minavgmax{    3}{    3}{    3}
 & \minavgmax{    2}{    4}{    6}
 & \minavgmax{     11}{     13}{     17}
 & \minavgmax{     37}{     42}{     46}
 & \minavgmax{    133}{    146}{    160}
 & \minavgmax{    299}{    313}{    325}
 & \minavgmax{    524}{    543}{    560}
 & \minavgmax{    813}{    839}{    867}
 & \minavgmax{   3209}{   3262}{   3307}
 & \minavgmax{  12755}{  12836}{  12923}
 & 
\\[7ex]
4 & \minavgmax{    4}{    4}{    4}
 & \minavgmax{    4}{    6.5}{    8}
 & \minavgmax{     31}{     35}{     40}
 & \minavgmax{    168}{    182}{    196}
 & \minavgmax{   1085}{   1118}{   1144}
 & \minavgmax{   3396}{   3452}{   3499}
 & \minavgmax{   7748}{   7802}{   7887}
 & \minavgmax{  14719}{  14807}{  14914}
 & 
\\[7ex]
5 & \minavgmax{    5}{    5}{    5}
 & \minavgmax{    8}{    9.7}{     12}
 & \minavgmax{     72}{     81}{     88}
 & \minavgmax{    610}{    633}{    661}
 & \minavgmax{   6650}{   6716}{   6789}
 & \minavgmax{  29181}{  29329}{  29536}
 & 
\\[7ex]
6 & \minavgmax{    6}{    6}{    6}
 & \minavgmax{     10}{     14}{     16}
 & \minavgmax{    146}{    160}{    174}
 & \minavgmax{   1847}{   1896}{   1934}
 & \minavgmax{  33406}{  33591}{  33880}
 & 
\\[7ex]
7 & \minavgmax{    7}{    7}{    7}
 & \minavgmax{     15}{     18}{     21}
 & \minavgmax{    276}{    291}{    304}
 & \minavgmax{   5015}{   5062}{   5114}
 & 
\\[7ex]
8 & \minavgmax{    8}{    8}{    8}
 & \minavgmax{     19}{     23}{     29}
 & \minavgmax{    476}{    498}{    521}
 & \minavgmax{  12213}{  12295}{  12396}
 & 
\\[7ex]
10 & \minavgmax{     10}{     10}{     10}
 & \minavgmax{     29}{     36}{     41}
 & \minavgmax{   1236}{   1262}{   1289}
 & 
\\[7ex]
15 & \minavgmax{     15}{     15}{     15}
 & \minavgmax{     64}{     76}{     85}
 & \minavgmax{   7274}{   7352}{   7442}
 & 
\\[7ex]
20 & \minavgmax{     20}{     20}{     20}
 & \minavgmax{    123}{    133}{    146}
 & \minavgmax{  26742}{  26880}{  26985}
 & 
\\[7ex]
30 & \minavgmax{     30}{     30}{     30}
 & \minavgmax{    282}{    295}{    309}
 & 
\\[7ex]
40 & \minavgmax{     40}{     40}{     40}
 & \minavgmax{    502}{    521}{    542}
 & 
\\
\bottomrule
\end{tabular}
\end{center}
\end{table}

\begin{table}[t]
  \caption{Integration of a random dense homogeneous polynomial of prescribed degree by
    decomposition into a sum of powers of linear forms}
  \label{tab:integration-dense-poly-by-decomposition-powerlinform}
  \tiny
  \begin{center}
\begin{tabular}{r*{11}r@{}r}
\toprule
 & \multicolumn{12}{c}{Degree}\\
\cmidrule{2-13}
$n$ & 1 & 2 & 5 & 10 & 20 & 30 & 40 & 50 & 100 & 200 & 300 & \\
\midrule
2 & \minavgmax{      0}{      0}{      0}
 & \minavgmax{      0}{      0}{      0}
 & \minavgmax{      0}{      0}{    0.1}
 & \minavgmax{    0.1}{    0.1}{    0.2}
 & \minavgmax{    1.0}{    1.2}{    1.4}
 & \minavgmax{    3.1}{    3.6}{    4.4}
 & \minavgmax{    7.8}{    9.0}{     11}
 & \minavgmax{     16}{     18}{     21}
 & \minavgmax{    228}{    270}{    304}
 & 
\\[7ex]
3 & \minavgmax{      0}{      0}{    0.1}
 & \minavgmax{      0}{      0}{    0.1}
 & \minavgmax{    0.1}{    0.2}{    0.3}
 & \minavgmax{    2.1}{    2.5}{    2.9}
 & \minavgmax{     74}{     81}{     88}
 & 
\\[7ex]
4 & \minavgmax{      0}{      0}{    0.1}
 & \minavgmax{      0}{    0.1}{    0.1}
 & \minavgmax{    0.6}{    0.8}{    1.0}
 & \minavgmax{     30}{     32}{     35}
 & 
\\[7ex]
5 & \minavgmax{      0}{      0}{    0.2}
 & \minavgmax{    0.1}{    0.1}{    0.2}
 & \minavgmax{    2.6}{    3.0}{    3.4}
 & 
\\[7ex]
6 & \minavgmax{      0}{    0.1}{    0.2}
 & \minavgmax{    0.1}{    0.2}{    0.3}
 & \minavgmax{    8.9}{    9.5}{     10}
 & 
\\[7ex]
7 & \minavgmax{    0.1}{    0.1}{    0.2}
 & \minavgmax{    0.2}{    0.3}{    0.4}
 & \minavgmax{     26}{     28}{     30}
 & 
\\[7ex]
8 & \minavgmax{    0.1}{    0.1}{    0.2}
 & \minavgmax{    0.4}{    0.5}{    0.7}
 & \minavgmax{     81}{     85}{     91}
 & 
\\[7ex]
10 & \minavgmax{    0.2}{    0.3}{    0.4}
 & \minavgmax{    0.9}{    1.0}{    1.2}
 & 
\\[7ex]
15 & \minavgmax{    0.6}{    0.7}{    0.8}
 & \minavgmax{    4.0}{    4.3}{    4.8}
 & 
\\[7ex]
20 & \minavgmax{    1.6}{    1.8}{    1.9}
 & \minavgmax{     12}{     13}{     14}
 & 
\\[7ex]
30 & \minavgmax{    5.8}{    6.1}{    6.4}
 & \minavgmax{     63}{     68}{     72}
 & 
\\[7ex]
40 & \minavgmax{     15}{     16}{     16}
 & \minavgmax{    221}{    232}{    241}
 &
\\[7ex]
50 & \minavgmax{     29}{     30}{     30}
 & 
\\
\bottomrule
\end{tabular}
\end{center}
\end{table}
\begin{table}[t]
  \caption{Integration of a random dense homogeneous polynomial of prescribed
    degree using
    iterated Laurent series}
  \label{tab:integration-dense-poly-via-iterated}
  \tiny
  \begin{center}
\begin{tabular}{r*{11}r@{}r}
\toprule
 & \multicolumn{12}{c}{Degree}\\
\cmidrule{2-13}
$n$ & 1 & 2 & 5 & 10 & 20 & 30 & 40 & 50 & 100 & 200 & 300 & \\
\midrule 2 & \minavgmax{      0}{      0}{    0.1}
 & \minavgmax{      0}{      0}{    0.1}
 & \minavgmax{      0}{      0}{    0.1}
 & \minavgmax{      0}{    0.1}{    0.2}
 & \minavgmax{    0.2}{    0.3}{    0.4}
 & \minavgmax{    0.6}{    0.8}{    1.0}
 & \minavgmax{    1.2}{    1.7}{    2.2}
 & \minavgmax{    2.1}{    3.1}{    4.0}
 & \minavgmax{     24}{     29}{     37}
 & \minavgmax{    255}{    350}{    438}
 & 
\\[7ex]
3 & \minavgmax{      0}{      0}{    0.2}
 & \minavgmax{      0}{    0.1}{    0.2}
 & \minavgmax{    0.2}{    0.4}{    0.6}
 & \minavgmax{    2.1}{    2.7}{    3.3}
 & \minavgmax{     22}{     32}{     37}
 & \minavgmax{    146}{    184}{    210}
 & 
\\[7ex]
4 & \minavgmax{      0}{    0.1}{    0.2}
 & \minavgmax{    0.1}{    0.2}{    0.3}
 & \minavgmax{    2.4}{    3.0}{    3.6}
 & \minavgmax{     37}{     40}{     44}
 & 
\\[7ex]
5 & \minavgmax{    0.2}{    0.2}{    0.3}
 & \minavgmax{    0.3}{    0.5}{    0.7}
 & \minavgmax{     12}{     14}{     16}
 & \minavgmax{    351}{    365}{    380}
 & 
\\[7ex]
6 & \minavgmax{    0.3}{    0.4}{    0.5}
 & \minavgmax{    1.0}{    1.2}{    1.5}
 & \minavgmax{     50}{     55}{     61}
 & 
\\[7ex]
7 & \minavgmax{    0.7}{    0.8}{    0.9}
 & \minavgmax{    2.0}{    2.4}{    2.8}
 & \minavgmax{    161}{    179}{    195}
 & 
\\[7ex]
8 & \minavgmax{    1.2}{    1.3}{    1.6}
 & \minavgmax{    3.3}{    4.6}{    5.8}
 & \minavgmax{    481}{    517}{    560}
 & 
\\[7ex]
10 & \minavgmax{    3.5}{    3.6}{    3.8}
 & \minavgmax{     13}{     15}{     18}
 & 
\\[7ex]
15 & \minavgmax{     22}{     23}{     23}
 & \minavgmax{    117}{    127}{    140}
 & 
\\[7ex]
20 & \minavgmax{     91}{     94}{     98}
 & 
\\
\bottomrule
\end{tabular}
  \end{center}
\end{table}

\begin{table}[t]
  \caption{Integration of a random dense homogeneous polynomial of prescribed degree  using Taylor expansion. }
  \label{tab:integration-dense-poly-via-duality}
  \tiny
  \begin{center}
\begin{tabular}{r*{11}r@{}r}
\toprule
 & \multicolumn{12}{c}{Degree}\\
\cmidrule{2-13}
$n$ & 1 & 2 & 5 & 10 & 20 & 30 & 40 & 50 & 100 & 200 & 300 & \\
\midrule
2 & \minavgmax{      0}{      0}{      0}
 & \minavgmax{      0}{      0}{      0}
 & \minavgmax{      0}{      0}{    0.1}
 & \minavgmax{      0}{    0.1}{    0.1}
 & \minavgmax{    0.1}{    0.2}{    0.3}
 & \minavgmax{    0.3}{    0.8}{    1.1}
 & \minavgmax{    1.2}{    2.3}{    3.1}
 & \minavgmax{    1.2}{    5.6}{    7.4}
 & \minavgmax{     15}{     98}{    129}
 & 
\\[7ex]
3 & \minavgmax{      0}{      0}{      0}
 & \minavgmax{      0}{      0}{      0}
 & \minavgmax{    0.1}{    0.3}{    2.0}
 & \minavgmax{    1.1}{    2.4}{    3.3}
 & \minavgmax{     60}{    132}{    158}
 & 
\\[7ex]
4 & \minavgmax{      0}{      0}{      0}
 & \minavgmax{      0}{      0}{    0.1}
 & \minavgmax{    1.1}{    1.6}{    2.6}
 & \minavgmax{     62}{    117}{    143}
 & 
\\[7ex]
5 & \minavgmax{      0}{      0}{      0}
 & \minavgmax{    0.1}{    0.1}{    0.1}
 & \minavgmax{    4.4}{    8.1}{     24}
 & 
\\[7ex]
6 & \minavgmax{      0}{      0}{    0.1}
 & \minavgmax{    0.1}{    0.2}{    2.3}
 & \minavgmax{     28}{     35}{     41}
 & 
\\[7ex]
7 & \minavgmax{      0}{      0}{    0.1}
 & \minavgmax{    0.2}{    0.4}{    5.5}
 & \minavgmax{    117}{    136}{    150}
 & 
\\[7ex]
8 & \minavgmax{      0}{    0.1}{    0.1}
 & \minavgmax{    0.4}{    0.7}{     12}
 & \minavgmax{    366}{    431}{    475}
 & 
\\[7ex]
10 & \minavgmax{    0.1}{    0.1}{    0.1}
 & \minavgmax{    0.8}{    1.3}{    9.8}
 & 
\\[7ex]
15 & \minavgmax{    0.1}{    0.2}{    0.2}
 & \minavgmax{    4.0}{    5.6}{    7.3}
 & 
\\[7ex]
20 & \minavgmax{    0.3}{    0.4}{    2.8}
 & \minavgmax{     16}{     18}{     20}
 & 
\\[7ex]
30 & \minavgmax{    0.9}{    1.1}{    4.3}
 & \minavgmax{     95}{    104}{    109}
 & 
\\[7ex]
40 & \minavgmax{    2.3}{    2.6}{    4.4}
 & \minavgmax{    370}{    384}{    403}
 &
\\[7ex]
50 & \minavgmax{    5.2}{    5.3}{    5.5}
 & 
\\
\bottomrule
\end{tabular}
  \end{center}
\end{table}

\subsection{Integration of a monomial
  with few effective variables}

Finally, we tested the performance of the three algorithms on
monomials~$\ve x^{\ve M}$ with a small number~$D$ of effective
variables.  We fix the number~$D$.  Then, for a given
dimension~$n\geq D$ and total degree~$M$, we picked 50 combinations
of a random simplex~$\Delta$ of dimension~$n$ and a random exponent
vector~$\ve M = (M_1,\dots,M_D,0,\dots,0)$ with $|\ve M| = M$. We
only include the results for~$D=2$ in Tables
\ref{tab:integration-few-effective-by-decomposition-powerlinform},
\ref{tab:integration-few-effective-via-duality}, and
\ref{tab:integration-few-effective-via-iterated}.

\begin{table}[t]
  \caption{Integration of a monomial of prescribed degree
    with 2 effective variables by
    decomposition into a sum of powers of linear forms}
  \label{tab:integration-few-effective-by-decomposition-powerlinform}
  \tiny
  \begin{center}
\begin{tabular}{r*{11}r@{}r}
\toprule
 & \multicolumn{12}{c}{Degree}\\
\cmidrule{2-13}
$n$ & 1 & 2 & 5 & 10 & 20 & 30 & 40 & 50 & 100 & 200 & 300 & \\
\midrule
3 & \minavgmax{      0}{      0}{      0}
 & \minavgmax{      0}{      0}{    0.2}
 & \minavgmax{      0}{      0}{    0.1}
 & \minavgmax{      0}{    0.1}{    0.2}
 & \minavgmax{      0}{    0.3}{    0.5}
 & \minavgmax{      0}{    0.7}{    1.1}
 & \minavgmax{      0}{    1.5}{    2.3}
 & \minavgmax{      0}{    2.2}{    3.5}
 & \minavgmax{    1.9}{     11}{     15}
 & \minavgmax{    1.9}{     47}{     80}
 & \minavgmax{    9.8}{    109}{    217}
 & 
\\[7ex]
4 & \minavgmax{      0}{      0}{      0}
 & \minavgmax{      0}{      0}{    0.3}
 & \minavgmax{      0}{      0}{    0.2}
 & \minavgmax{      0}{    0.1}{    0.2}
 & \minavgmax{      0}{    0.4}{    0.7}
 & \minavgmax{      0}{    0.9}{    1.4}
 & \minavgmax{      0}{    1.7}{    2.8}
 & \minavgmax{    0.5}{    2.8}{    4.3}
 & \minavgmax{    0.1}{     13}{     20}
 & \minavgmax{    0.2}{     64}{    103}
 & \minavgmax{    0.3}{    180}{    270}
 & 
\\[7ex]
5 & \minavgmax{      0}{      0}{      0}
 & \minavgmax{      0}{      0}{    0.4}
 & \minavgmax{      0}{    0.1}{    0.2}
 & \minavgmax{      0}{    0.1}{    0.3}
 & \minavgmax{      0}{    0.6}{    1.0}
 & \minavgmax{      0}{    1.3}{    2.0}
 & \minavgmax{      0}{    2.5}{    3.8}
 & \minavgmax{      0}{    3.5}{    5.9}
 & \minavgmax{    1.8}{     17}{     26}
 & \minavgmax{    0.3}{     81}{    128}
 & \minavgmax{    6.0}{    205}{    335}
 & 
\\[7ex]
6 & \minavgmax{      0}{      0}{    0.2}
 & \minavgmax{      0}{      0}{    0.2}
 & \minavgmax{      0}{    0.1}{    0.2}
 & \minavgmax{      0}{    0.2}{    0.4}
 & \minavgmax{      0}{    0.7}{    1.2}
 & \minavgmax{      0}{    1.8}{    2.6}
 & \minavgmax{      0}{    3.1}{    4.9}
 & \minavgmax{      0}{    4.5}{    7.5}
 & \minavgmax{    2.1}{     25}{     34}
 & \minavgmax{    7.0}{    105}{    164}
 & \minavgmax{    0.3}{    241}{    449}
 & 
\\[7ex]
7 & \minavgmax{      0}{      0}{      0}
 & \minavgmax{      0}{      0}{    0.3}
 & \minavgmax{      0}{    0.1}{    0.3}
 & \minavgmax{      0}{    0.2}{    0.5}
 & \minavgmax{      0}{    1.0}{    1.5}
 & \minavgmax{      0}{    1.9}{    3.0}
 & \minavgmax{      0}{    3.7}{    5.9}
 & \minavgmax{      0}{    5.9}{    8.7}
 & \minavgmax{    2.3}{     24}{     39}
 & \minavgmax{    0.2}{    128}{    184}
 & \minavgmax{    0.2}{    279}{    468}
 & 
\\[7ex]
8 & \minavgmax{      0}{      0}{      0}
 & \minavgmax{      0}{      0}{    0.5}
 & \minavgmax{      0}{    0.1}{    0.3}
 & \minavgmax{      0}{    0.3}{    0.6}
 & \minavgmax{      0}{    1.1}{    1.6}
 & \minavgmax{      0}{    2.3}{    3.5}
 & \minavgmax{      0}{    4.0}{    6.5}
 & \minavgmax{    0.1}{    6.8}{   10.0}
 & \minavgmax{    4.2}{     28}{     45}
 & \minavgmax{    0.4}{    122}{    206}
 & \minavgmax{   10.0}{    347}{    529}
 & 
\\[7ex]
10 & \minavgmax{      0}{      0}{    0.3}
 & \minavgmax{      0}{      0}{    0.3}
 & \minavgmax{      0}{    0.1}{    0.4}
 & \minavgmax{      0}{    0.4}{    0.7}
 & \minavgmax{      0}{    1.3}{    2.2}
 & \minavgmax{      0}{    3.3}{    4.9}
 & \minavgmax{      0}{    5.6}{    8.6}
 & \minavgmax{    1.7}{     10}{     14}
 & \minavgmax{    0.1}{     41}{     58}
 & \minavgmax{     13}{    189}{    266}
 & 
\\[7ex]
15 & \minavgmax{      0}{    0.1}{    0.3}
 & \minavgmax{      0}{    0.1}{    0.3}
 & \minavgmax{      0}{    0.2}{    0.5}
 & \minavgmax{      0}{    0.7}{    1.2}
 & \minavgmax{    0.1}{    2.4}{    4.0}
 & \minavgmax{    0.1}{    6.0}{    8.9}
 & \minavgmax{    0.1}{    9.9}{     16}
 & \minavgmax{    0.1}{     15}{     24}
 & \minavgmax{    0.2}{     65}{    105}
 & \minavgmax{    0.2}{    292}{    479}
 & 
\\[7ex]
20 & \minavgmax{      0}{    0.1}{    0.3}
 & \minavgmax{    0.1}{    0.2}{    0.5}
 & \minavgmax{      0}{    0.5}{    0.8}
 & \minavgmax{    0.1}{    1.3}{    2.1}
 & \minavgmax{    0.1}{    4.7}{    7.1}
 & \minavgmax{    0.1}{    9.7}{     15}
 & \minavgmax{    0.1}{     17}{     26}
 & \minavgmax{    0.2}{     26}{     40}
 & \minavgmax{     23}{    123}{    170}
 & 
\\[7ex]
30 & \minavgmax{    0.1}{    0.2}{    0.4}
 & \minavgmax{    0.2}{    0.4}{    0.8}
 & \minavgmax{    0.2}{    0.9}{    1.5}
 & \minavgmax{    0.2}{    2.8}{    4.7}
 & \minavgmax{    0.2}{     11}{     15}
 & \minavgmax{    0.2}{     19}{     33}
 & \minavgmax{    0.2}{     38}{     58}
 & \minavgmax{    0.2}{     55}{     92}
 & \minavgmax{    0.3}{    254}{    369}
 & 
\\[7ex]
40 & \minavgmax{    0.3}{    0.4}{    0.6}
 & \minavgmax{    0.3}{    0.6}{    1.2}
 & \minavgmax{    0.3}{    1.9}{    2.9}
 & \minavgmax{    0.3}{    5.4}{    8.5}
 & \minavgmax{    0.3}{     20}{     29}
 & \minavgmax{    0.4}{     41}{     62}
 & \minavgmax{    0.4}{     66}{    110}
 & \minavgmax{    0.4}{    101}{    171}
 & 
\\[7ex]
50 & \minavgmax{    0.5}{    0.6}{    0.8}
 & \minavgmax{    0.5}{    1.0}{    2.1}
 & \minavgmax{    0.5}{    3.4}{    4.7}
 & \minavgmax{    0.6}{    9.2}{     14}
 & \minavgmax{    0.6}{     31}{     49}
 & \minavgmax{    0.6}{     63}{    106}
 & \minavgmax{    0.7}{    130}{    185}
 & \minavgmax{     36}{    201}{    286}
 & 
\\
\bottomrule
\end{tabular}
\end{center}
\end{table}

\begin{table}[t]
  \caption{
    Integration of a monomial of prescribed degree
    with 2 effective variables
    using iterated Laurent expansion}
  \label{tab:integration-few-effective-via-iterated}
  \tiny
  \begin{center}
\begin{tabular}{r*{11}r@{}r}
\toprule
 & \multicolumn{12}{c}{Degree}\\
\cmidrule{2-13}
$n$ & 1 & 2 & 5 & 10 & 20 & 30 & 40 & 50 & 100 & 200 & 300 & \\
\midrule
3 & \minavgmax{      0}{      0}{      0}
 & \minavgmax{      0}{      0}{      0}
 & \minavgmax{      0}{      0}{      0}
 & \minavgmax{      0}{      0}{    0.1}
 & \minavgmax{      0}{    0.1}{    0.1}
 & \minavgmax{      0}{    0.2}{    0.3}
 & \minavgmax{      0}{    0.3}{    5.6}
 & \minavgmax{      0}{    0.3}{    2.1}
 & \minavgmax{    0.2}{    1.2}{    3.6}
 & \minavgmax{    0.3}{    6.2}{     20}
 & \minavgmax{    0.9}{     16}{     62}
 & 
\\[7ex]
4 & \minavgmax{      0}{      0}{    0.1}
 & \minavgmax{      0}{      0}{    0.1}
 & \minavgmax{      0}{    0.1}{    0.1}
 & \minavgmax{      0}{    0.1}{    0.1}
 & \minavgmax{    0.1}{    0.1}{    0.2}
 & \minavgmax{    0.1}{    0.3}{    4.2}
 & \minavgmax{    0.1}{    0.3}{    2.5}
 & \minavgmax{    0.1}{    0.5}{    3.0}
 & \minavgmax{    0.2}{    2.1}{    5.4}
 & \minavgmax{    0.4}{     11}{     38}
 & \minavgmax{    0.9}{     47}{    166}
 & 
\\[7ex]
5 & \minavgmax{    0.1}{    0.1}{    0.1}
 & \minavgmax{      0}{    0.1}{    0.1}
 & \minavgmax{    0.1}{    0.1}{    0.1}
 & \minavgmax{    0.1}{    0.1}{    0.2}
 & \minavgmax{    0.1}{    0.3}{    6.4}
 & \minavgmax{    0.1}{    0.3}{    2.3}
 & \minavgmax{    0.1}{    0.5}{    2.8}
 & \minavgmax{    0.1}{    0.7}{    3.7}
 & \minavgmax{    0.3}{    3.6}{    9.8}
 & \minavgmax{    0.6}{     21}{     80}
 & \minavgmax{    1.3}{     86}{    271}
 & 
\\[7ex]
6 & \minavgmax{    0.1}{    0.1}{    0.2}
 & \minavgmax{    0.1}{    0.1}{    0.1}
 & \minavgmax{    0.1}{    0.1}{    0.2}
 & \minavgmax{    0.1}{    0.2}{    3.4}
 & \minavgmax{    0.1}{    0.3}{    2.5}
 & \minavgmax{    0.1}{    0.6}{    2.6}
 & \minavgmax{    0.2}{    0.8}{    3.8}
 & \minavgmax{    0.2}{    1.1}{    4.7}
 & \minavgmax{    0.4}{    7.2}{     30}
 & \minavgmax{    1.0}{     39}{    269}
 & \minavgmax{    1.9}{    100}{    550}
 & 
\\[7ex]
7 & \minavgmax{    0.1}{    0.1}{    0.2}
 & \minavgmax{    0.1}{    0.1}{    0.2}
 & \minavgmax{    0.1}{    0.2}{    3.9}
 & \minavgmax{    0.1}{    0.3}{    2.5}
 & \minavgmax{    0.2}{    0.6}{    2.8}
 & \minavgmax{    0.2}{    0.8}{    3.4}
 & \minavgmax{    0.2}{    1.2}{    4.2}
 & \minavgmax{    0.3}{    2.0}{    6.7}
 & \minavgmax{    0.6}{    6.8}{     26}
 & \minavgmax{    1.2}{     94}{    260}
 & 
\\[7ex]
8 & \minavgmax{    0.2}{    0.2}{    0.3}
 & \minavgmax{    0.2}{    0.3}{    4.5}
 & \minavgmax{    0.2}{    0.3}{    2.3}
 & \minavgmax{    0.2}{    0.4}{    2.6}
 & \minavgmax{    0.2}{    0.7}{    3.1}
 & \minavgmax{    0.2}{    1.2}{    3.6}
 & \minavgmax{    0.3}{    1.9}{    5.8}
 & \minavgmax{    0.3}{    2.9}{     12}
 & \minavgmax{    0.7}{     16}{     42}
 & \minavgmax{    2.0}{     99}{    322}
 & 
\\[7ex]
10 & \minavgmax{    0.3}{    0.5}{    4.2}
 & \minavgmax{    0.3}{    0.4}{    2.3}
 & \minavgmax{    0.3}{    0.5}{    2.5}
 & \minavgmax{    0.4}{    0.7}{    3.0}
 & \minavgmax{    0.4}{    1.2}{    3.9}
 & \minavgmax{    0.5}{    2.4}{    6.5}
 & \minavgmax{    0.6}{    3.8}{     11}
 & \minavgmax{    0.6}{    5.9}{     17}
 & \minavgmax{    1.1}{     23}{     77}
 & 
\\[7ex]
15 & \minavgmax{    1.3}{    1.4}{    1.6}
 & \minavgmax{    1.3}{    1.5}{    1.9}
 & \minavgmax{    1.4}{    1.7}{    2.0}
 & \minavgmax{    1.4}{    2.1}{    3.4}
 & \minavgmax{    1.6}{    4.0}{    9.1}
 & \minavgmax{    1.7}{    7.3}{     21}
 & \minavgmax{    1.9}{     11}{     31}
 & \minavgmax{    2.4}{     20}{     52}
 & 
\\[7ex]
20 & \minavgmax{    4.0}{    4.2}{    4.4}
 & \minavgmax{    4.0}{    4.4}{    5.6}
 & \minavgmax{    4.1}{    4.8}{    5.7}
 & \minavgmax{    4.2}{    6.1}{    9.3}
\\[7ex]
30 & \minavgmax{     22}{     25}{     26}
 & \minavgmax{     24}{     25}{     28}
 & \minavgmax{     24}{     27}{     31}
 & \minavgmax{     25}{     32}{     45}
 & \minavgmax{     26}{     47}{     96}
 & 
\\[7ex]
40 & \minavgmax{     71}{     85}{     92}
 & \minavgmax{     88}{     92}{    102}
 & \minavgmax{     90}{     95}{    103}
\\[7ex]
50 & \minavgmax{    196}{    231}{    252}
 & \minavgmax{    228}{    239}{    249}
 & \minavgmax{    212}{    244}{    277}
 & \minavgmax{    217}{    261}{    347}
 & 
\\
\bottomrule
\end{tabular}
  \end{center}
\end{table}

\begin{table}[t]
  \caption{Integration of a monomial of prescribed degree
    with 2 effective variables
    using Taylor expansion}
  \label{tab:integration-few-effective-via-duality}
  \tiny
  \begin{center}
\noindent
\begin{tabular}{r*{11}r@{}r}
\toprule
 & \multicolumn{12}{c}{Degree}\\
\cmidrule{2-13}
$n$ & 1 & 2 & 5 & 10 & 20 & 30 & 40 & 50 & 100 & 200 & 300 & \\
\midrule
3 & \minavgmax{      0}{      0}{      0}
 & \minavgmax{      0}{      0}{      0}
 & \minavgmax{      0}{      0}{    0.1}
 & \minavgmax{      0}{      0}{    0.1}
 & \minavgmax{      0}{    0.1}{    0.2}
 & \minavgmax{      0}{    0.2}{    0.6}
 & \minavgmax{      0}{    0.4}{    1.3}
 & \minavgmax{      0}{    0.8}{    2.6}
 & \minavgmax{      0}{    5.8}{     27}
 & \minavgmax{    1.9}{     62}{    359}
\\[7ex]
4 & \minavgmax{      0}{      0}{      0}
 & \minavgmax{      0}{      0}{      0}
 & \minavgmax{      0}{      0}{    0.1}
 & \minavgmax{      0}{      0}{    0.1}
 & \minavgmax{      0}{    0.2}{    0.4}
 & \minavgmax{      0}{    0.4}{    1.2}
 & \minavgmax{      0}{    0.7}{    3.0}
 & \minavgmax{      0}{    1.8}{    8.2}
 & \minavgmax{      0}{     11}{     58}
\\[7ex]
5 & \minavgmax{      0}{      0}{      0}
 & \minavgmax{      0}{      0}{      0}
 & \minavgmax{      0}{      0}{    0.1}
 & \minavgmax{      0}{    0.1}{    0.2}
 & \minavgmax{      0}{    0.2}{    0.7}
 & \minavgmax{      0}{    0.6}{    2.2}
 & \minavgmax{      0}{    2.0}{    6.7}
 & \minavgmax{      0}{    2.9}{     15}
 & \minavgmax{      0}{     19}{    110}
 & 
\\[7ex]
6 & \minavgmax{      0}{      0}{      0}
 & \minavgmax{      0}{      0}{      0}
 & \minavgmax{      0}{      0}{    0.1}
 & \minavgmax{      0}{    0.1}{    0.3}
 & \minavgmax{      0}{    0.7}{     16}
 & \minavgmax{      0}{    0.9}{    8.4}
 & \minavgmax{      0}{    2.0}{    8.8}
 & \minavgmax{      0}{    4.4}{     72}
 & \minavgmax{      0}{     31}{    124}
\\[7ex]
7 & \minavgmax{      0}{      0}{      0}
 & \minavgmax{      0}{      0}{    0.1}
 & \minavgmax{      0}{      0}{    0.1}
 & \minavgmax{      0}{    0.1}{    0.3}
 & \minavgmax{      0}{    0.5}{    1.8}
 & \minavgmax{      0}{    1.6}{    8.4}
 & \minavgmax{    0.2}{    4.3}{     21}
 & \minavgmax{      0}{    5.9}{     32}
 & \minavgmax{      0}{     52}{    246}
\\[7ex]
8 & \minavgmax{      0}{      0}{    0.1}
 & \minavgmax{      0}{      0}{    0.1}
 & \minavgmax{      0}{    0.1}{    0.2}
 & \minavgmax{      0}{    0.2}{    0.4}
 & \minavgmax{      0}{    0.9}{    2.8}
 & \minavgmax{      0}{    2.7}{     36}
 & \minavgmax{      0}{    3.6}{     19}
 & \minavgmax{    0.3}{    9.3}{     42}
 & \minavgmax{    0.1}{     65}{    547}
\\[7ex]
10 & \minavgmax{      0}{      0}{    0.1}
 & \minavgmax{      0}{      0}{    0.1}
 & \minavgmax{      0}{    0.1}{    0.2}
 & \minavgmax{      0}{    0.2}{    0.6}
 & \minavgmax{      0}{    1.1}{    7.0}
 & \minavgmax{      0}{    3.5}{     22}
 & \minavgmax{      0}{    8.4}{     40}
 & \minavgmax{      0}{     20}{     92}
\\[7ex]
20 & \minavgmax{      0}{      0}{    0.1}
 & \minavgmax{      0}{    0.1}{    0.2}
 & \minavgmax{      0}{    0.4}{    1.1}
 & \minavgmax{      0}{    2.8}{    9.3}
 & \minavgmax{      0}{     12}{     43}
 & \minavgmax{      0}{     40}{    156}
 & \minavgmax{      0}{     74}{    309}
 & \minavgmax{    2.0}{    121}{    462}
\\[7ex]
30 & \minavgmax{      0}{      0}{    0.1}
 & \minavgmax{      0}{    0.2}{    0.5}
 & \minavgmax{      0}{    1.4}{    5.6}
 & \minavgmax{      0}{    6.9}{     29}
 & \minavgmax{      0}{     62}{    205}
 & \minavgmax{    0.1}{     90}{    583}
 & 
\\[7ex]
40 & \minavgmax{      0}{    0.1}{    0.1}
 & \minavgmax{      0}{    0.4}{    0.9}
 & \minavgmax{    0.1}{    3.3}{     32}
 & \minavgmax{    0.1}{     10}{     40}
 & \minavgmax{    0.1}{     71}{    329}
 & 
\\[7ex]
50 & \minavgmax{    0.1}{    0.1}{    0.1}
 & \minavgmax{    0.1}{    0.4}{    1.5}
 & \minavgmax{    0.1}{    5.5}{     39}
 & \minavgmax{    0.1}{     19}{     65}
 & \minavgmax{    0.1}{    126}{    525}
\\
\bottomrule
\end{tabular}
  \end{center}
\end{table}

\clearpage
\subsection{Discussion}

In our implementation of the three methods and our experiments for
the case of random monomials, we observe that the method of iterated
Laurent expansion is faster than the two other methods if the
dimension~$n$ is very small (up to $n=5$).  Starting from
dimension~$n=6$, the method using decompositions into powers of
linear forms is faster than the other two methods.  The method using
Taylor expansion  is always inferior to the better of the two other
methods, for any combination of degree and dimension.

In the experiments with random dense polynomials, in our implementation we did not
see significant savings from collecting the coefficients of the same powers of
linear forms.  As a consequence, the ranking of the three methods is the same
as it is in the case of random monomials.

The experiments with random monomials with few effective variables
show that all three methods benefit from using few effective
variables.  The greatest effect is on the method using
decompositions into powers of linear forms, where, for example, the
restriction to 2~effective variables allows to handle combinations
of high degree~$M=200$ and high dimension $n=15$. However, for low
dimensions ($n\leq 5$), the method of iterated Laurent expansion
still wins.  Also here the method using Taylor expansion is always
inferior to the better of the two other methods. This discussion
shows the power of Brion's formula.

\clearpage
\section{Conclusions}

We discussed various algorithms for the exact integration of
polynomials over simplicial regions. Beside their theoretical
efficiency, the simple rough experiments we performed clearly
demonstrated that these methods are robust enough to attack rather
difficult problems.  Our investigations opened several doors for
further development, which we will present in a forthcoming paper.

First, we have some theoretical issues expanding on our results. As
in the case of volumes and the computation of centroids, it is
likely that our hardness result, Theorem \ref{generalMhard}, can be
extended into an  inapproximability result as those obtained in
\cite{rademacher}. Another  goal is to study other families of
polytopes  for which exact integration can be done efficiently.
Furthermore,  we will  present a natural extension of the
computation of integrals, the efficient computation of  the highest
degree coefficients of a weighted Ehrhart quasipolynomial of a
simplex. Besides the methods of the present article, these last
computations are based on the results of
\cite{Baldoni-Berline-Vergne-2008} and \cite{Berline-Vergne-2007}.

Second, our intention has been all along to develop algorithms with
a good chance of becoming practical and that allow for clear
implementation.  Thus we have also some practical improvements to
discuss.

Finally, in order to
develop practical integration software, it appears that our methods
should be coupled with fast techniques for decomposing domains into
polyhedral regions (e.g. triangulations).

\section{Acknowledgements}

A part of the work for this paper was done while the five authors
visited Centro di Ricerca Matematica Ennio De Giorgi at the Scuola
Normale Superiore of Pisa, Italy.  We are grateful for the
hospitality of CRM and the support we received. The third author was
also supported by NSF grant DMS-0608785. We are grateful for help
from Christophe Margerin and comments from Jean-Bernard Lasserre and
Bernd Sturmfels.

\clearpage
\bibliographystyle{amsabbrv}
\bibliography{biblio}

\providecommand{\bysame}{\leavevmode\hbox to3em{\hrulefill}\thinspace}
\providecommand{\MR}{\relax\ifhmode\unskip\space\fi MR }
\providecommand{\MRhref}[2]{%
  \href{http://www.ams.org/mathscinet-getitem?mr=#1}{#2}
}
\providecommand{\href}[2]{#2}
\begin{thebibliography}{10}

\bibitem{alexanderhirschowitz}
J.~Alexander and A.~Hirschowitz, \emph{Polynomial interpolation in several
  variables}, J. Algebraic Geom. \textbf{4} (1995), 201--222.

\bibitem{deloeraetal:integration-paper:accompanying-programs}
V.~Baldoni, N.~Berline, J.~A. De~Loera, M.~K{\"o}ppe, and M.~Vergne,
  \emph{Maple programs accompanying the manuscript \emph{How to integrate a
  polynomial over a simplex}},
  \url{http://www.math.ucdavis.edu/~mkoeppe/art/pisa-integration-experiments/},
  2008.

\bibitem{Barvinok-1991}
A.~I. Barvinok, \emph{Computation of exponential integrals}, Zap. Nauchn. Sem.
  Leningrad. Otdel. Mat. Inst. Steklov. (LOMI) Teor. Slozhn. Vychisl.
  \textbf{5} (1991), 149--162, 175--176, translation in J. Math. Sci. 70
  (1994), no. 4, 1934--1943.

\bibitem{integralsbarvinok}
\bysame, \emph{Exponential integrals and sums over convex polyhedra (in
  {R}ussian)}, Funktsional. Anal. i Prilozhen. \textbf{26} (1992), no.~2,
  64--66, translated in Funct. Anal. Appl. 26 (1992), no. 2, pp. 127--129.

\bibitem{Barvinok-1992}
\bysame, \emph{Partition functions in optimization and computational problems},
  Algebra i Analiz \textbf{4} (1992), 3--53, translation in St. Petersburg
  Math. J. 4 (1993), no. 1, pp. 1--49.

\bibitem{Berline-Vergne-2007}
N.~Berline and M.~Vergne, \emph{Local {E}uler--{M}aclaurin formula for
  polytopes}, Moscow Math. J. \textbf{7} (2007), 355--386.

\bibitem{Baldoni-Berline-Vergne-2008}
\bysame, \emph{Local {E}uler--{M}aclaurin expansion of {B}arvinok valuations
  and {E}hrhart coefficients of rational polytopes}, Contemporary Mathematics
  \textbf{452} (2008), 15--33.

\bibitem{Berline-Getzler-Vergne-1992}
N.~Berline, E.~Getzler, and M.~Vergne, \emph{Heat kernels and {D}irac
  operators}, Grundlehren der Mathematischen Wissenschaften [Fundamental
  Principles of Mathematical Sciences], vol. 298, Springer-Verlag, Berlin,
  1992.

\bibitem{bharuchaetal}
A.~T. Bharucha-Reid and M.~Sambandham, \emph{Random polynomials}, Academic
  Press, Orlando, Florida, 1986.

\bibitem{brachat}
J.~Brachat, P.~Comon, B.~Mourrain, and E.~Tsigaridas, \emph{{Symmetric tensor
  decomposition}}, arXiv:{\penalty0} 0901.3706v2 [cs.SC], 2009.

\bibitem{brambillaottaviani}
M.~C. Brambilla and G.~Ottaviani, \emph{On the {A}lexander--{H}irschowitz
  theorem}, e-print arXiv:math.AG/0701409v2, 2007.

\bibitem{brightwellwinkler91}
G.~Brightwell and P.~Winkler, \emph{Counting linear extensions}, Order
  \textbf{8} (1991), no.~3, 225--242.

\bibitem{Brion88}
M.~Brion, \emph{Points entiers dans les poly{\'e}dres convexes}, Ann. Sci.
  {\'E}cole Norm. Sup. \textbf{21} (1988), no.~4, 653--663.

\bibitem{burgisseretal}
P.~B\"urgisser, M.~Clausen, and M.~A. Shokrollahi, \emph{Algebraic complexity
  theory}, Grundlehren der mathematischen Wissenschaften, no. 315, Springer,
  Berlin, 1997.

\bibitem{triangbook}
J.~A. De~Loera, J.~Rambau, and F.~Santos, \emph{Triangulations: Structures and
  algorithms}, Book manuscript, 2008.

\bibitem{dyerfrieze88}
M.~E. Dyer and A.~M. Frieze, \emph{On the complexity of computing the volume of
  a polyhedron}, SIAM J. Comput. \textbf{17} (1988), no.~5, 967--974.

\bibitem{elekes86}
G.~Elekes, \emph{A geometric inequality and the complexity of computing
  volume}, Discrete Comput. Geom. \textbf{1} (1986), no.~4, 289--292.

\bibitem{gareyjohnson}
M.~R. Garey and D.~S. Johnson, \emph{Computers and intractibility: A guide to
  the theory of {NP}-completeness}, W. H. Freeman and Co., San Francisco,
  California, 1979.

\bibitem{kannan-bachem:79}
R.~Kannan and A.~Bachem, \emph{Polynomial algorithms for computing the {S}mith
  and {H}ermite normal forms of an integer matrix}, SIAM J. Comput. \textbf{8}
  (1979), no.~4, 499--507.

\bibitem{khachiyan93}
L.~Khachiyan, \emph{Complexity of polytope volume computation}, New trends in
  discrete and computational geometry, Algorithms Combin., vol.~10, Springer,
  Berlin, 1993, pp.~91--101.

\bibitem{lasserre-integrapoly}
J.~B. Lasserre, \emph{Integration on a convex polytope}, Proceedings of the AMS
  \textbf{126} (1998), no.~8, 2433--2441.

\bibitem{lasserre-avrachenkov2001}
J.~B. Lasserre and K.~Avrachenkov, \emph{The multidimensional version of
  $\int_a^b x^p \,\mathrm{d}x$}, American Mathematical Monthly \textbf{108}
  (2001), no.~2, 151--154.

\bibitem{laurentsurvey}
M.~Laurent, \emph{Sums of squares, moment matrices, and optimization over
  polynomials}, In ``Emerging applications of algebraic geometry (Minneapolis,
  Minnesota) (M.~Putinar and S.~Sullivant, eds.), IMA volumes in Mathematics
  and its Applications, 2008.

\bibitem{lawrence91}
J.~Lawrence, \emph{Polytope volume computation}, Math. Comp. \textbf{57}
  (1991), no.~195, 259--271.

\bibitem{lin-sturmfels-xu:marginal-likelihood}
S.~Lin, B.~Sturmfels, and Z.~Xu, \emph{Marginal likelihood integrals for
  mixtures of independence models}, e-print arXiv:0805.3602 [stat.CO], 2008.

\bibitem{matera}
G.~Matera, \emph{Integration of multivariate rational functions given by
  straight-line programs}, Proceedings of the 11th international symposium on
  applied algebra, algebraic algorithms and error-correcting codes (London),
  Lecture notes in Computer Science, vol. 948, Springer, 1995, pp.~347--364.

\bibitem{motzkin-straus}
T.~S. Motzkin and E.~G. Straus, \emph{Maxima for graphs and a new proof of a
  theorem of {T}ur\'an}, Canadian Journal of Mathematics \textbf{17} (1965),
  533--540.

\bibitem{rademacher}
L.~Rademacher, \emph{Approximating the centroid is hard}, Proceedings of 23th
  annual ACM Symposium of Computational Geometry, Gyeongju, South Korea, June
  6-8, 2007, 2007, pp.~302--305.

\bibitem{schechter}
M.~Schechter, \emph{Integration over a polyhedron: An application of the
  {F}ourier--{M}otzkin elimination method}, American Mathematical Monthly
  \textbf{105} (1998), no.~3, 246--251.

\bibitem{zieglerpolybook}
G.~M. Ziegler, \emph{Lectures on polytopes}, Graduate texts in Mathematics, no.
  152, Springer, New York, 1995.

\bibitem{ZienkTay}
O.~C. Zienkiewicz and R.~L. Taylor, \emph{The finite element method},
  McGraw-Hill, London, 1988.

\end{thebibliography}

\end{document}